\documentclass[a4paper]{amsart}
\usepackage[english]{babel}
\usepackage{epsfig}

\usepackage{amsmath}
\usepackage{amsfonts}
\usepackage{amssymb}
\usepackage{amsthm}
\usepackage{epic}
\usepackage{fancyhdr}
\usepackage{fancybox}
\usepackage{subfigure}

\makeatletter
\renewenvironment{proof}[1][\proofname]{\par
  \pushQED{\qed}%
  \normalfont \topsep6\p@\@plus6\p@\relax
  \trivlist
  \item[\hskip\labelsep
        \textsc
    #1\@addpunct{.}]\ignorespaces
}{%
  \popQED\endtrivlist\@endpefalse
}
\makeatother

\usepackage{url}

\usepackage{amsopn}
\usepackage{latexsym}

\usepackage[pagebackref=true]{hyperref}
\hypersetup{   pdfborder={0 0 0},   
              colorlinks = false}

\renewcommand*{\backref}[1]{}
\renewcommand*{\backrefalt}[4]{%
  \ifcase #1 %
    No citations.
  \or
    (Page #4).%
  \else
    (Pages #4).%
  \fi%
}

\setlength{\parindent}{0pt}
\setlength{\parskip}{1.8ex}

\newcommand{\R}{\mathbb R}
\newcommand{\Z}{\mathbb Z}
\newcommand{\Q}{\mathbb Q}
\newcommand{\N}{\mathbb N}


\newtheorem{Theorem}[equation]{Theorem}
\newtheorem{Proposition}[equation]{Proposition}
\newtheorem{Lemma}[equation]{Lemma}

\newtheorem{Corollary}[equation]{Corollary}

\theoremstyle{definition}
\newtheorem{Definition}[equation]{Definition}
\theoremstyle{remark}
\newtheorem{Rmk}[equation]{Remark}
\newtheorem{Ex}[equation]{Example}

\hbadness=10000
\hfuzz=50pt





%

\setcounter{secnumdepth}{3}
\setcounter{tocdepth}{3}

\title{Approximation Results for \texorpdfstring{$\alpha $}{alpha}-Rosen Fractions}
\author{Cor Kraaikamp} 
\address{Delft University of Technology and Thomas Stieltjes Institute for
Mathematics, DIAM, Mekelweg 4, 2628 CD Delft, the Netherlands}
\email{c.kraaikamp@tudelft.nl}
\author{Ionica Smeets}
\pagestyle{headings} \markboth{Cor Kraaikamp and Ionica
Smeets}{Symmetric and asymmetric Diophantine approximation}
\address{Universiteit Leiden and Thomas Stieltjes Institute for
Mathematics, Niels Bohrweg 1, 2333 CA Leiden, the Netherlands}
\email{ionica.smeets@gmail.com}
\date{\today}

\subjclass{Primary 28D05, 11K50}
\keywords{Rosen fractions, natural extensions,approximation quality}

\begin{document}
\maketitle

\begin{abstract}
In this article we generalize Borel's classical approximation results for the regular continued fraction expansion to the $\alpha$-Rosen fraction expansion,  using a geometric method. We give a Haas-Series-type result about all possible good approximations for the $\alpha$ for which the Legendre constant is larger than the Hurwitz constant.
\end{abstract}

\section{Introduction}

In 1798 Legendre proved the following result~\cite{L}.

\begin{Theorem}
\label{th: legendre}
Let $x\in \R \setminus \Q$ and $\frac{p_n}{q_n}$ be the $n$th regular continued fraction convergent of $x$, $n\geq 0$.  If $p, q\in \Z$, $q>0$, and $\gcd (p,q)=1$, then
$$
\left| x-\frac{p}{q}\right| < \frac{1}{2q^2}\quad
\textrm{ implies that}\quad \left[
\begin{array}{c}
p \\
q
\end{array}\right] = \left[ \begin{array}{c}
p_n\\
q_n
\end{array}\right] ,\quad \text{for some $n\geq 0$}.
$$
\end{Theorem}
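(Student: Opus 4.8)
The plan is to show directly that the hypothesis forces $\frac{p}{q}$ to appear as a convergent, by exhibiting the full continued fraction expansion of $x$ that begins with $\frac{p}{q}$. Since $\gcd(p,q)=1$ and $q \geq 1$, I would first write $\frac{p}{q}$ as a finite regular continued fraction $[a_0;a_1,\dots,a_n] = \frac{p_n}{q_n}$, with associated $p_{n-1},q_{n-1}$ satisfying $p_n q_{n-1} - p_{n-1}q_n = (-1)^{n-1}$ and $0 \le q_{n-1} < q_n$. Because a rational has two such representations differing in the parity of $n$, I would choose the parity so that $(-1)^n$ matches the sign of $x - \frac{p}{q}$. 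The key idea is then to define the real number $\omega$ by
$$
x = \frac{p_n \omega + p_{n-1}}{q_n \omega + q_{n-1}},
$$
so that, by the standard theory, $\frac{p_n}{q_n}$ is precisely the $n$th convergent of $x$ as soon as $\omega > 1$.

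Second, I would reduce everything to the inequality $\omega > 1$. A direct computation using the determinant identity gives
$$
x - \frac{p_n}{q_n} = \frac{(-1)^n}{q_n (q_n \omega + q_{n-1})},
$$
and the parity choice guarantees $\omega > 0$, so that $\left| x - \frac{p}{q}\right| = \frac{1}{q_n(q_n\omega + q_{n-1})}$. Feeding this into the hypothesis $\left| x - \frac{p}{q}\right| < \frac{1}{2q^2} = \frac{1}{2q_n^2}$ yields $q_n \omega + q_{n-1} > 2 q_n$, i.e. $\omega > 2 - \frac{q_{n-1}}{q_n}$. Since $0 \le q_{n-1} < q_n$, the right-hand side exceeds $1$, so $\omega > 1$, and the expansion $x = [a_0;a_1,\dots,a_n,\dots]$ obtained by expanding $\omega$ shows that $\frac{p}{q} = \frac{p_n}{q_n}$ is a convergent of $x$, as required.

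The step I expect to be the main obstacle is the bookkeeping around the non-uniqueness of the continued fraction of the rational $\frac{p}{q}$: one must check that the parity of $n$ can always be chosen to make $\omega$ positive while still keeping $0 \le q_{n-1} < q_n$ (the only delicate case being $\frac{p}{q}$ whose trailing partial quotient equals $1$, where one passes to the other representation), and that $\omega > 1$ together with $x$ irrational indeed produces a genuine regular continued fraction with all partial quotients at least $1$. An alternative route would be to argue by contradiction from the best-approximation property of the convergents: taking $n$ with $q_n \le q < q_{n+1}$ and $\frac{p}{q} \ne \frac{p_n}{q_n}$, one combines $|pq_n - p_n q| \ge 1$ with the estimates for $\left|x - \frac{p}{q}\right|$ and $\left|x-\frac{p_n}{q_n}\right|$ to contradict $q < q_{n+1}$; but the substitution approach above is cleaner and avoids invoking the best-approximation theorem.
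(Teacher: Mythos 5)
The paper does not actually prove this statement: it is Legendre's classical 1798 theorem, quoted with a citation and used only as motivation for the Rosen-fraction results, so there is no in-paper proof to compare against. Your argument is the standard textbook proof and it is correct: writing $\frac{p}{q}=[a_0;a_1,\dots,a_n]$ with the parity of $n$ chosen so that $(-1)^n$ matches the sign of $x-\frac{p}{q}$, defining $\omega$ by $x=\frac{p_n\omega+p_{n-1}}{q_n\omega+q_{n-1}}$, using the determinant identity to get $\left|x-\frac{p}{q}\right|=\frac{1}{q_n(q_n\omega+q_{n-1})}$, and then converting the hypothesis into $\omega>2-\frac{q_{n-1}}{q_n}>1$ is exactly the classical route, and $\omega>1$ together with the irrationality of $x$ does yield a legitimate regular continued fraction expansion of $x$ whose $n$th convergent is $\frac{p_n}{q_n}=\frac{p}{q}$. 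One micro-remark on the bookkeeping you rightly flag: the bound $0\le q_{n-1}<q_n$ can degenerate to $q_{n-1}=q_n$ in the single case $n=1$, $a_1=1$ (i.e.\ $q=1$), but since the inequality $\omega>2-\frac{q_{n-1}}{q_n}$ coming from the strict hypothesis is itself strict, you still get $\omega>1$ there, so nothing breaks; alternatively that case is handled by switching to the other representation of the rational. Your fallback via the best-approximation property is the other standard proof, but the substitution argument as you present it is complete.
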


Legendre's Theorem is one of the main reasons for studying continued fractions, because it tells us that good approximations of irrational numbers by rational numbers are given by continued fraction convergents. We call the best possible coefficient of $\frac{1}{q^2}$, independent of $x$, the Legendre constant. It is $\frac12$ for regular continued fraction (RCF) expansions. For the nearest integer continued fraction expansion (NICF) the Legendre constant is $g^2$, where $g = \frac{\sqrt{5}-1}{2} \approx 0.61$ is the golden number; see~\cite{Ito}.

\begin{Definition}
\label{def: intro theta}
Let $x\in [0,1)\setminus \Q$ and $\frac{p_n}{q_n} $ be the $n$th regular continued fraction convergent of $x$, $n\geq 0$. The approximation coefficient $\Theta_n~=~\Theta_n (x)$ is
defined by
\[
\Theta_n=q_n^2 \left| x - \frac{p_n}{q_n} \right| , \textrm{ for } n \geq 0.
\]
 \end{Definition}

We usually suppress the dependence of $\Theta_n$ on $x$ in our notation. The approximation coefficient gives a numerical indication of the quality of the approximation; for the RCF it easily follows that $\Theta_n \leq 1$. For the RCF-expansion we have the following classical theorems by Borel (1905)~\cite{B} and Hurwitz (1891)~\cite{H} about the quality of the approximations.
\begin{Theorem} 
\label{Borel}For every irrational
number $x$, and every $n\geq 1$
$$
\min\{\Theta_{n-1},\Theta_{n},\Theta_{n+1}\} <
\displaystyle{\frac{1}{\sqrt{5}}}.
$$
The constant $1/\sqrt{5}$ is best possible.
\end{Theorem}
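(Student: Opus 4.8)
The plan is to reduce Borel's theorem to a two-dimensional problem on the unit square and then argue by contradiction that three consecutive coefficients cannot all be large. It suffices to treat $x\in(0,1)$.

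First I would attach to the $n$-th convergent two coordinates: the \emph{future} $t_n=[0;a_{n+1},a_{n+2},\dots]$ and the \emph{past} $v_n=q_{n-1}/q_n=[0;a_n,a_{n-1},\dots,a_1]$, both lying in $(0,1)$ for $n\ge 1$. Writing $x=\frac{p_n\,\xi+p_{n-1}}{q_n\,\xi+q_{n-1}}$ with $\xi=1/t_n$, and using $p_nq_{n-1}-p_{n-1}q_n=(-1)^{n-1}$, a short computation gives the classical identities
\[
\Theta_{n-1}=\frac{1}{t_n+1/v_n}=\frac{v_n}{1+t_nv_n},\qquad \Theta_n=\frac{1}{1/t_n+v_n}=\frac{t_n}{1+t_nv_n}.
\]
Each hypothesis $\Theta\ge 1/\sqrt5$ then becomes an explicit region in the $(t,v)$-square, and the whole theorem is a statement about how these regions can overlap.

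The heart of the argument is a pinching lemma: \emph{if $\Theta_{n-1}\ge 1/\sqrt5$ and $\Theta_n\ge 1/\sqrt5$, then $t_n\ge g$ and $v_n\ge g$}, where $g=\tfrac{\sqrt5-1}{2}$. Indeed the two hypotheses read $v_n\ge\frac{1/\sqrt5}{1-t_n/\sqrt5}$ and $t_n\ge\frac{1/\sqrt5}{1-v_n/\sqrt5}$; each boundary curve is increasing, and the system is symmetric under $t\leftrightarrow v$. To bound $v_n$ below I would minimize $v$ over this region: decreasing $t$ relaxes the first constraint but is blocked by the second, so at the minimizer both constraints are tight, i.e. the two boundary curves meet. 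Eliminating a variable there gives the quadratic $s^2-\sqrt5\,s+1\le 0$, whose smaller root is $g$; hence the curves meet only at $t=v=g$ and the region lies in $\{t\ge g,\,v\ge g\}$. (As a cross-check, multiplying the two hypotheses yields $\frac{t_nv_n}{(1+t_nv_n)^2}\ge\frac15$, i.e. $t_nv_n\ge g^2$.)

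Now suppose, for contradiction, that $\Theta_{n-1},\Theta_n,\Theta_{n+1}\ge 1/\sqrt5$ for some $n\ge1$. The pinching lemma applied to $(\Theta_{n-1},\Theta_n)$ gives $t_n\ge g$ and $v_n\ge g$, and applied to the shifted pair $(\Theta_n,\Theta_{n+1})$ gives $v_{n+1}\ge g$. Since $t_n\ge g>\tfrac12$ we have $1<1/t_n<1/g<2$, so $a_{n+1}=\lfloor 1/t_n\rfloor=1$ and therefore
\[
v_{n+1}=\frac{1}{a_{n+1}+v_n}=\frac{1}{1+v_n}\le\frac{1}{1+g}=g.
\]
Combining the two bounds forces $v_{n+1}=g$ and hence $v_n=g$, which is impossible since $v_n=q_{n-1}/q_n\in\Q$ while $g\notin\Q$; this gives the strict inequality. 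For sharpness I would take $x=[0;1,1,1,\dots]=g$, so that $t_n=g$ for all $n$ and $v_n=F_n/F_{n+1}\to g$, whence $\Theta_n=\frac{g}{1+gv_n}\to\frac{g}{1+g^2}=\frac{1}{\sqrt5}$ and likewise for the neighbours; thus $\min\{\Theta_{n-1},\Theta_n,\Theta_{n+1}\}\to 1/\sqrt5$ and no smaller constant is admissible. I expect the pinching lemma to be the only genuine obstacle—specifically, justifying that the two boundary curves meet the diagonal only at $(g,g)$ and that this corner simultaneously minimizes both coordinates—after which the contradiction and the sharpness follow immediately.
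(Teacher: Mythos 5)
The paper does not prove this statement: Theorem~\ref{Borel} is quoted as Borel's classical 1905 result with a citation to~\cite{B}, so there is no in-paper proof to compare against. Your argument is correct, and it is precisely the two-dimensional ``natural extension'' method that the paper itself uses for its $\alpha$-Rosen generalizations: your coordinates $(t_n,v_n)$ and the identities $\Theta_{n-1}=\frac{v_n}{1+t_nv_n}$, $\Theta_n=\frac{t_n}{1+t_nv_n}$ are the RCF specialization of~(\ref{eq: formulae for theta}), and your region $\{\Theta_{n-1}\geq 1/\sqrt5\}\cap\{\Theta_n\geq 1/\sqrt5\}$ is the analogue of the set $\mathcal{D}$ in~(\ref{def:D}). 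The one place where you are informal is the pinching lemma; the cleanest way to nail it down is to note that with $\phi(s)=\frac{1}{\sqrt5-s}$ the two hypotheses read $v_n\geq\phi(t_n)$ and $t_n\geq\phi(v_n)$, that $\phi$ is increasing with $\phi(s)>s$ for $0<s<g$ (the numerator of $\phi(s)-s$ is $s^2-\sqrt5\,s+1$, positive below its smaller root $g$), so $t_n<g$ would give $t_n\geq\phi(v_n)\geq\phi(\phi(t_n))>t_n$, a contradiction; this replaces the slightly hand-wavy ``at the minimizer both constraints are tight'' step and also confirms that the boundary curves meet only at $(g,g)$ inside the square. The remaining steps --- $a_{n+1}=1$ forcing $v_{n+1}=\frac{1}{1+v_n}\leq g$ against $v_{n+1}\geq g$, the rationality of $v_n$ giving strictness, and sharpness via $x=[0;\overline{1}\,]$ with $\frac{g}{1+g^2}=\frac{1}{\sqrt5}$ --- are all correct.
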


\begin{Rmk}
If we replace $\frac{1}{\sqrt{5}}$ by a smaller constant $C$, then there are countably infinitely many irrational numbers $x$ for which
\[
\left| x -\frac{p}{q} \right| \leq \frac{C}{q^2}
\]
holds for only finitely many pairs of integers $p$ and $q$. An example of such a number is the small golden number $g$.
 \end{Rmk}

Because $ \frac{1}{\sqrt{5}}< \frac12 $ the results of Legendre and Borel imply Hurwitz's Theorem  that states that for every irrational number $x$ there exist infinitely many pairs of
integers $p$ and $q$, such that
$$
\left| x -\frac{p}{q}\right| < \frac{1}{\sqrt{5}}\frac{1}{q^2}
$$
We call the best possible coefficient of $\frac{1}{q^2}$ in this inequality  the Hurwitz constant. It is $\frac{1}{\sqrt{5}}$ for RCF-expansions. 

J.C.~Tong \cite{T1,T2} generalized Borel's result for the nearest integer continued fraction expansion (NICF). He showed that for the NICF there exists a `spectrum,' i.e., there exists a sequence of constants $(c_k)_{k\geq 1}$, monotonically decreasing to $1/\sqrt{5}$, such that for all irrational numbers $x$ the minimum of any block of $k+2$ consecutive NICF-approximation coefficients is smaller than $c_k$.

\begin{Theorem}For every irrational number $x$
and all positive integers $n$ and $k$ one has
$$
\min\{\Theta_{n-1},\Theta_{n},\ldots,\Theta_{n+k}\} <
\frac{1}{\sqrt{5}} +
\frac{1}{\sqrt{5}}\left(\frac{3-\sqrt{5}}{2}\right)^{2k+3}.
$$
The constant $c_k=\frac{1}{\sqrt{5}} +
\frac{1}{\sqrt{5}}\left(\frac{3-\sqrt{5}}{2}\right)^{2k+3}$ is
best possible.
\end{Theorem}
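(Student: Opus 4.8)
The plan is to obtain both the inequality and its sharpness from the two‑dimensional natural extension of the NICF map, following the singularization philosophy by which the NICF is produced from the regular continued fraction. The starting point is the pair of identities that, on the natural extension domain $\Omega\subset\R^2$ with coordinates $(t,v)$ (here $t=q_{n-1}/q_n$ is the ``past'' and $v=T^nx$ is the NICF ``future''), read
$$
\Theta_{n-1}=\frac{t}{1+tv},\qquad \Theta_n=\frac{v}{1+tv}.
$$
These follow exactly as for the RCF from $x=(p_n+p_{n-1}v)/(q_n+q_{n-1}v)$, with the NICF signs $\epsilon_i=\pm1$ absorbed into the shape of $\Omega$ (a finite union of rectangles, as in Nakada's description of the $\alpha=\tfrac12$ case). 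The structural fact I would exploit is that the natural extension map $\mathcal T$ shifts the future and prepends one digit to the past, so the entire block $\Theta_{n-1},\Theta_n,\dots,\Theta_{n+k}$ of $k+2$ consecutive approximation coefficients is read off by evaluating the single function $v/(1+tv)$ along $k+1$ successive iterates of $\mathcal T$.

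With this dictionary the theorem becomes a purely dynamical statement. For a constant $c$ put $R_c=\{(t,v)\in\Omega:\ v/(1+tv)\ge c\}$; then $\min\{\Theta_{n-1},\dots,\Theta_{n+k}\}\ge c$ holds precisely when the orbit segment $(t_{n-1},v_{n-1}),\dots,(t_{n+k},v_{n+k})$ lies entirely in $R_c$. I would therefore determine the supremum of those $c$ for which $R_c$ contains a forward orbit segment of length $k+2$. Since $v/(1+tv)$ is large only when $v$ is large and $t$ is small, staying in $R_c$ forces the NICF digits $b_i$ to take their smallest admissible values, and here the admissibility conditions (all $b_i\ge2$, with $b_i=2$ forcing $\epsilon_{i+1}=+1$) carry out the combinatorial work. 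The key lemma to isolate is a monotonicity statement: lengthening the forced small‑digit string, or replacing any digit by a larger one, strictly decreases the largest $c$ that can be sustained across the whole block, so that the extremal block is the one built entirely from minimal digits.

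It then remains to identify the extremal orbit and to compute. The all‑minimal‑digit string is the NICF analogue of the purely periodic golden‑mean expansion; its past and future coordinates are the quadratic surds arising as the fixed point of the corresponding M\"obius map, and these are exactly the numbers through which $g=\tfrac{\sqrt5-1}{2}$ and its powers enter. Evaluating $v/(1+tv)$ at the central coefficient of the length‑$(k+2)$ extremal segment and simplifying, using $\tfrac{3-\sqrt5}{2}=g^2$, I expect to recover the closed form $c_k=\tfrac1{\sqrt5}\bigl(1+g^{2(2k+3)}\bigr)$, in which the additional term is precisely the correction coming from the block being finite rather than infinite and tends to $0$ as $k\to\infty$, recovering Borel's $1/\sqrt5$. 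Sharpness then follows by exhibiting the irrational number whose NICF expansion is eventually this extremal periodic string: along it the window‑minimum equals $c_k$ infinitely often, so no smaller constant can replace $c_k$. The main obstacle I anticipate is neither the dictionary nor the final algebra but the middle step, namely describing $R_c\cap\Omega$ rectangle by rectangle and proving the monotonicity lemma against all admissible digit strings and sign patterns, since it is the sign constraints of the NICF that make this optimization genuinely harder than in the RCF setting.
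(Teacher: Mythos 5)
Your overall strategy---reading the whole block of approximation coefficients off the orbit of the NICF natural extension and optimizing a single function of $(t,v)$ over orbit segments---is exactly the method of \cite{HK} that this paper cites for Tong's theorem and then imitates for $\alpha$-Rosen fractions, so the framework is sound (modulo a missing sign: the NICF future coordinate can be negative, so the formula for $\Theta_n$ carries a factor $\varepsilon_{n+1}$, as in (\ref{eq: formulae for theta})). However, two of your steps would fail as described. First, the sharpness argument: a number whose expansion is \emph{eventually the extremal periodic string} has its orbit converging to the fixed point $(-g^2,g^2)$ itself, and along such an orbit the window-minima tend to the limiting value $1/\sqrt{5}$, not to $c_k$; moreover the window-minimum can never \emph{equal} $c_k$ (that would contradict the strict inequality you are proving), because the extremal point is a corner of the strip $\mathcal D_{1,k}$ whose second coordinate corresponds to an infinite periodic past, unattainable by any actual orbit since $v_n$ always has a finite expansion. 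The correct witnesses contain infinitely many blocks consisting of a long run of the period (pushing the past coordinate up towards $g^2$) followed by \emph{exactly} $k$ further copies of the period and then an exit digit (so the future coordinate approaches the preimage $\tau_{k-1}$ of the flushing boundary); only then does the window-minimum approach $c_k$ from below.

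Second, the ``monotonicity lemma'' you isolate as the key middle step is where all the content lives, and it is not how the argument actually closes. The paper's method (see Lemma~\ref{lem:q=4}, the flushing analysis, and Lemma~\ref{lem: theta on d q4} for the analogous case $q=4$) replaces any optimization over all admissible digit strings by a finite explicit computation: determine the region $\mathcal D$ where two consecutive coefficients exceed the constant; show that everything in $\mathcal D$ outside the small-digit strip adjacent to the fixed point is flushed out in a bounded number of steps; partition the remaining strip by the preimages $\tau_k$ of the flushing boundary under the periodic M\"obius map; prove the ordering $\Theta_{n-1}\le\Theta_n\le\Theta_{n+1}$ on each piece; and maximize $\Theta_{n-1}$ at the top-left vertex of each piece using the signs of its partial derivatives. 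Note in particular that the binding coefficient is the \emph{first} of the window (via the ordering lemma), not the ``central'' one, and that $c_k$ is the value at a $k$-fold \emph{preimage} of the fixed point rather than at a point of the periodic orbit; your closed form $\frac{1}{\sqrt 5}\bigl(1+g^{4k+6}\bigr)$ is the right target, but the computation producing it must be anchored at $\tau_{k-1}$, not at the golden-mean fixed point itself.
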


In~\cite{HK} Hartono and Kraaikamp showed how Tong's result follows by a geometrical method based on the natural extension of the NICF. The method will be discussed in~Section~\ref{sec:geo}. In~\cite{KSS} this method was  extended to Rosen fractions, yielding the next theorem.

\begin{Theorem}
\label{th: tong even} Fix $q = 2p$, with $p \in \N, p \ge 2$ and let $\lambda=\lambda_q=2 \cos\frac{\pi}{q}$.  For
every $G_q$-irrational number $x$ and all positive $n$ and $k$,
one has
$$
\min \{ \Theta_{n-1},\Theta_n,\ldots,\Theta_{n+k(p-1)}\} <
 c_{k},
$$
with $$
c_{k}=\frac{-\tau_{k-1}}{1+(\lambda-1)\tau_{k-1}} \quad \textrm{and} \quad  \tau_{k}=\left[\left(-1:2,\left(-1:1\right)^{p-2}\right)^k, (-2,3)\right] .  
$$
The constant $c_{k}$ is best possible.
\end{Theorem}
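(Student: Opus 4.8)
The plan is to run the geometric machinery described in Section~\ref{sec:geo}, following the treatment that Hartono and Kraaikamp gave for the NICF and its extension to Rosen fractions. First I would fix the natural extension $(\Omega,\mathcal T,\mu)$ of the even Rosen map for $q=2p$ and recall how the approximation coefficients are read off from a single orbit: writing $(t_n,v_n)=\mathcal T^{n}(t_0,v_0)\in\Omega$ for the ``future'' and ``past'' coordinates attached to $x$, one has Doeblin--Lenstra type identities
$$
\Theta_{n-1}=\frac{v_n}{1+t_nv_n},\qquad \Theta_n=\frac{t_n}{1+t_nv_n},
$$
each monotone in its respective variable. Thus the whole block $\Theta_{n-1},\dots,\Theta_{n+k(p-1)}$ is determined by the orbit segment $(t_{n},v_{n}),\dots,(t_{n+k(p-1)},v_{n+k(p-1)})$, and the theorem becomes a purely dynamical statement about $\mathcal T$.

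The reduction I would then carry out is the following. For a threshold $c$ let $A_c\subset\Omega$ be the region on which the coefficient read off at a point is $\ge c$; by the identities above $A_c$ is cut out by a branch of a hyperbola, so it is an explicit region with piecewise-smooth boundary. The assertion $\min\{\Theta_{n-1},\dots,\Theta_{n+k(p-1)}\}<c$ for all $x$ is equivalent to saying that no $\mathcal T$-orbit can stay inside $A_c$ for $k(p-1)+2$ consecutive steps, and the best constant $c_k$ is exactly the infimum of those $c$ for which this holds. Because the even Rosen domain $\Omega$ splits into finitely many rectangular cells on which $\mathcal T$ acts as a M\"obius map in each coordinate, the admissible itineraries keeping the orbit in $A_{c}$ form a finite combinatorial object that I would enumerate cell by cell. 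The factor $p-1$ in the block length appears here: the longest word an orbit can spell while remaining in $A_{c_k}$ is built from blocks of length $p-1$, repeated at most $k$ times, which is precisely the pattern $\bigl(-1\!:\!2,(-1\!:\!1)^{p-2}\bigr)^{k}$ recorded in $\tau_k$.

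To pin down $c_k$ and prove it best possible I would produce the extremal point explicitly. The $G_q$-irrational whose digit string is eventually the periodic pattern coded by $\tau_k=[(-1\!:\!2,(-1\!:\!1)^{p-2})^{k},(-2,3)]$ is a quadratic surd whose past coordinate sits at a corner of $\Omega$ (which is what produces the factor $\lambda-1$ in the denominator) and whose future coordinate is $-\tau_{k-1}$; substituting into the identities above shows that along its orbit the minimum of the block equals
$$
c_k=\frac{-\tau_{k-1}}{1+(\lambda-1)\tau_{k-1}},
$$
so the bound cannot be improved, while the orbit-escape argument of the previous paragraph gives the strict inequality $<c_k$ for every $x$. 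That the supremum of the block-minimum over the admissible region is attained exactly at this periodic orbit, and equals the stated closed form, is the identity to be checked.

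The main obstacle is the combinatorial bookkeeping forced by the $p-1$ ``small-digit'' cells of the even Rosen natural extension. Unlike the NICF, where two cells suffice, here one must show that the \emph{only} way to keep all $k(p-1)+2$ coefficients $\ge c_k$ is to follow an initial segment of the extremal word $\tau_k$, and that after exactly $k(p-1)+1$ steps the next coefficient is forced below $c_k$. Ruling out every competing itinerary, and verifying that the resulting optimum matches $c_k=\frac{-\tau_{k-1}}{1+(\lambda-1)\tau_{k-1}}$ rather than some larger value, is where the real work lies.
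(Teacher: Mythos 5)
Your outline follows essentially the same route as the paper's treatment: the paper imports this $\alpha=\tfrac12$ statement from~\cite{KSS} and carries out exactly this method for its $\alpha$-generalization --- express $\Theta_{n-1},\Theta_n$ in the coordinates $(t_n,v_n)$ of the natural extension, delimit the region $\mathcal D$ where consecutive coefficients exceed the threshold, show that orbits are ``flushed'' out of it after at most $k$ rounds of $p-1$ steps governed by the repelling fixed point of $\mathcal T^{p-1}$ (whence the digit pattern $(-1\!:\!2,(-1\!:\!1)^{p-2})^k$), and evaluate the coefficient at the corner $(\tau_{k-1},\lambda-1)$ to obtain $c_k$ and its sharpness. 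The substantive verifications you defer (ruling out competing itineraries, checking the ordering $\Theta_{n-1}\le\Theta_n\le\Theta_{n+1}$ on the strips, and the sign $\varepsilon_{n+1}$ in $\Theta_n=\varepsilon_{n+1}t_n/(1+t_nv_n)$, so the extremal future coordinate is $\tau_{k-1}$ itself) are precisely the computations the paper and~\cite{KSS} carry out, so the plan is sound.
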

%

A similar theorem was derived for the case that $q$ is odd. In this article we derive Borel results for both even and odd $\alpha$-Rosen fractions. 

\subsection{\texorpdfstring{$\alpha$}{alpha}-Rosen fractions}
\label{sec: rosen intro}

Let $q\geq 3, \lambda=\lambda_q=2\cos\frac{\pi}{q}$ and let $\alpha \in \left[\frac12,\frac{1}{\lambda}\right]$. The $\alpha$-Rosen fraction operator $T_\alpha:[(\alpha-1)\lambda,\alpha\lambda) \rightarrow [(\alpha-1)\lambda,\alpha\lambda)$ is defined by
\begin{equation}
\label{def: chap 3 Talpha}
T_{\alpha}(x) =  \frac{\varepsilon}{x}- \lambda \left \lfloor\, \frac{\varepsilon}{\lambda x}  + 1 -\alpha \right\rfloor \textrm{ if } x\neq 0 \textrm{ and } T_{\alpha}(0):=0.
\end{equation}

Repeatedly applying this operator to $x \in [(\alpha-1)\lambda,\alpha\lambda)$ yields the $\alpha$-Rosen expansion of $x$. Put 
\begin{equation}
\label{eq: alpha digits}
d(x) = \left\lfloor \left| \frac{1}{\lambda x} \right| + 1 -\alpha \right\rfloor \ \quad \textrm{and} \quad 
\varepsilon(x)=\rm{sgn}(x).
\end{equation} 

Furthermore, for $n \geq 1$ with $T_{\alpha}^{n-1}(x) \neq 0$ put
\[
\varepsilon_{n}(x)=\varepsilon_n = \epsilon(T^{n-1}_\alpha (x)) \textrm{ and } d_n(x)=d_n=d(T^{n-1}_\alpha (x)).
\]
This yields a continued fraction of the type
\[
x = \displaystyle{\frac{\varepsilon_1}{d_1\lambda +  \displaystyle{\frac{\varepsilon_2}{d_2\lambda + \dots}}}} = [\varepsilon_1:d_1,\varepsilon_2:d_2,\dots],
\]
where $\varepsilon \in \{ \pm 1\}$ and $d_i \in \N^+$. 

In this article we derive a Borel-type result for $\alpha$-Rosen fractions. Let $q$ be fixed. Haas and Series~\cite{HS} showed that for every $G_q$-irrational $x$ there exist infinitely many $G_q$-rationals
$r/s$, such that $s^2\left|x-\frac{r}{s}\right| \leq \mathcal{H}_q$, where the Hurwitz constant $\mathcal{H}_q$ is given by
\begin{equation}
\label{eq: alpha hq}
\mathcal{H}_q=\begin{cases}
\displaystyle{\frac{1}{2}} & \quad \textrm{if $q$ is even,}\\
\displaystyle{\frac{1}{\sqrt{\lambda^2-4\lambda+8}}} & \quad \textrm{if $q$ is odd.}\\
\end{cases}
\end{equation}

In this paper the following Borel result is obtained.
\begin{Theorem}\label{thm: article Borel alpha}
Let $\alpha \in [1/2,1/\lambda]$ and denote the $n$th $\alpha$-Rosen convergent by $p_n/q_n$. For every $G_q$-irrational $x$ there are infinitely many $n \in
\N$ for which
\[ 
q_n^2 \left| x - \frac{p_n}{q_n} \right| \leq \mathcal{H}_q 
\]
The constant $\mathcal{H}_q$ is best possible.
\end{Theorem}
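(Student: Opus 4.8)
The plan is to run the natural-extension (geometric) method used for the NICF in Hartono--Kraaikamp~\cite{HK} and for the symmetric Rosen fractions in \cite{KSS}, now for the full family $T_\alpha$, reading off both the bound and its optimality from the dynamics on the two-dimensional natural extension. First I would translate $\Theta_n$ into coordinates on the natural extension. From the matrix form of $T_\alpha$ one has the convergent recurrences $p_n = d_n\lambda\,p_{n-1}+\varepsilon_n p_{n-2}$ and $q_n = d_n\lambda\,q_{n-1}+\varepsilon_n q_{n-2}$, hence $|p_{n-1}q_n-p_nq_{n-1}| = 1$ and $x = (p_n + V_n p_{n-1})/(q_n + V_n q_{n-1})$ with $V_n := T_\alpha^{\,n}(x)$. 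Writing $t_n := q_{n-1}/q_n$, a direct computation gives
\[
\Theta_n = \frac{|V_n|}{|1+t_nV_n|}, \qquad \Theta_{n-1} = \frac{t_n}{|1+t_nV_n|}.
\]
The point of the natural extension $(\Omega_\alpha,\mathcal T_\alpha)$ — whose explicit fibred domain and invariant density I would take from the description of the $\alpha$-Rosen dynamics — is precisely that $(t_n,V_n)_{n\ge0}$ is the $\mathcal T_\alpha$-orbit of the point $(t_0,V_0)\in\Omega_\alpha$, so each $\Theta_n$ is the value along this orbit of the single function $\theta(t,V):=|V|/|1+tV|$.

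With this dictionary the assertion ``$\Theta_n\le\mathcal H_q$ for infinitely many $n$'' becomes ``the forward $\mathcal T_\alpha$-orbit of $(t_0,V_0)$ meets the region $B_\alpha:=\{\theta\le\mathcal H_q\}\subset\Omega_\alpha$ infinitely often''. For $\mu_\alpha$-almost every $x$ this is immediate from ergodicity of $\mathcal T_\alpha$ together with $\mu_\alpha(B_\alpha)>0$; the content of the theorem is to get it for \emph{every} $G_q$-irrational. I would do this by showing that the bad region $\Omega_\alpha\setminus B_\alpha$ is wandering in the strong sense that no point of $\Omega_\alpha$ has its entire forward orbit inside it. Since $\theta$ is large exactly when $|V|$ sits at the extreme end of its fibre, an orbit that stayed bad forever would be forced onto a single extremal symbolic pattern; tracking the induced contraction on the $t$-coordinate (a ratio of continuants) then shows the image leaves the extreme set after boundedly many steps. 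Delimiting $B_\alpha$ inside the $\alpha$-dependent domain and proving this escape uniformly in $\alpha\in[\tfrac12,\tfrac1\lambda]$ is the main obstacle, and it is here that the even/odd split in $\mathcal H_q$ from \eqref{eq: alpha hq} enters, through the differing fibre geometry in the two cases.

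For optimality I would exhibit, for each $q$, a distinguished quadratic $G_q$-irrational $x^\ast$ whose $\alpha$-Rosen expansion is eventually periodic and whose orbit accumulates on the boundary point of $\Omega_\alpha$ where $\theta=\mathcal H_q$; this $x^\ast$ is the fixed point of the extremal M\"obius transformation underlying the Haas--Series constant \eqref{eq: alpha hq}. Along it the sequence $(\Theta_n)$ is eventually periodic with smallest value exactly $\mathcal H_q$, so $\liminf_n\Theta_n=\mathcal H_q$ and no constant $c<\mathcal H_q$ can replace $\mathcal H_q$. I expect the value of $\mathcal H_q$, and in particular its independence of $\alpha$, to drop out because this extremal boundary point and the value of $\theta$ there are shared by the whole family $\{\Omega_\alpha\}$, even though the convergents $p_n/q_n$ themselves change with $\alpha$. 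Finally, in the regime where the Legendre constant of $T_\alpha$ exceeds $\mathcal H_q$ one can shortcut the ``every $x$'' step: the Haas--Series theorem~\cite{HS} already supplies infinitely many $G_q$-rationals $r/s$ with $s^2|x-r/s|\le\mathcal H_q$, and a Legendre-type theorem then forces all but finitely many of these to be $\alpha$-Rosen convergents, giving the bound directly.
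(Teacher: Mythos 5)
Your overall strategy---natural-extension coordinates, a bad region, an escape argument, and optimality via an orbit accumulating on a fixed point sitting on the boundary where $\theta=\mathcal H_q$---is exactly the route the paper takes. But the escape step, which is the heart of the matter, contains a concrete error. You claim that ``tracking the induced contraction on the $t$-coordinate (a ratio of continuants) \dots\ shows the image leaves the extreme set after boundedly many steps.'' This is false, and it also has the mechanism backwards. First, the set $\{\Theta_n>\mathcal H_q\}$ by itself imposes no symbolic rigidity: it contains points with several different digits and both signs, so an orbit lingering in it is not ``forced onto a single extremal symbolic pattern.'' The rigidity only appears after passing to the two-consecutive-coefficient region $\mathcal D=\{\min\{\Theta_{n-1},\Theta_n\}>\mathcal H_q\}$ of~(\ref{def:D}); this reduction is harmless (a tail of the orbit lying entirely in your bad set is the same as a tail lying in $\mathcal D$) but you never make it, and the delimitation of the components of $\mathcal D$ across the even/odd and $\alpha$-subcases is where almost all of the paper's work lies. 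Second, on the component of $\mathcal D$ adjacent to the fixed point of $\mathcal T_\alpha^{r}$ the past coordinate $q_{n-1}/q_n$ is \emph{attracted} to the fixed value $\lambda-\rho$; what expels the orbit is the \emph{repulsion} of the future coordinate $T_\alpha^n(x)$ from $-\rho/(1+\lambda\rho)$, and the resulting escape time is finite for each orbit but \emph{not} uniformly bounded. A uniformly bounded escape time would give $\min\{\Theta_n,\dots,\Theta_{n+M}\}\le\mathcal H_q$ for a fixed $M$, contradicting the strict decrease of the Tong constants $c_k\downarrow\mathcal H_q$ in Theorems~\ref{th: tong even general} and~\ref{th: 3 tong odd general}. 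The correct replacement for your bounded-escape claim is Lemma~\ref{lem:FixedAndFlushed}($ii$): a point whose whole forward orbit stays in $\mathcal D$ would have to lie in the fixed-point set of $\mathcal T_\alpha^{r}$, and no $(t_n,v_n)$ can, because the past coordinate $v_n$ always has a finite expansion. (Your mechanism is correct for the components $\mathcal D_2$, $\mathcal D_3$, $\mathcal D^+$ away from the fixed point, which are indeed flushed in boundedly many rounds by the attraction in the $v$-coordinate, but not where it matters.)

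Two smaller points. Your optimality witness is essentially the paper's (take $t_1=-\rho/(1+\lambda\rho)$ so that the orbit converges to the fixed point), but the sequence $(\Theta_n)$ along it is \emph{not} eventually periodic---the past coordinate never becomes periodic---it is only asymptotically periodic, with the relevant approximation coefficients increasing to $\mathcal H_q$; that still yields optimality, and the odd case with $\alpha\in(\rho/\lambda,1/\lambda]$ needs the separate fixed point $(\rho,\rho)$ of $\mathcal T_\alpha^{h+1}$ in $\mathcal D^+$. Finally, your Haas--Series-plus-Legendre shortcut is available only for odd $q$ and $\alpha\ge\alpha_L$: Proposition~\ref{lem: lenstra constant} and the discussion preceding it show $\mathcal L_\alpha<\mathcal H_q$ for all even $q$ and for odd $q$ with $\alpha<\alpha_L$, so that shortcut cannot replace the dynamical argument on most of the parameter range (and in the paper the implication runs the other way: the Borel theorem plus the Lenstra constant is used to recover Haas--Series).
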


We remarked that for regular continued fractions the results of Borel and Legendre imply Hurwitz's result. For Rosen fractions, the case $\alpha=\frac12$ it follows from Nakada~\cite{N2} that the Legendre constant is smaller than the Hurwitz constant $\mathcal{H}_q$ (both in the odd and even case). This means that there might exist $G_q$-rationals with ``quality'' smaller than $\mathcal{H}_q$ that are not found as Rosen-convergents. So a direct continued fraction proof of the generalization by Haas and Series~\cite{HS} of Hurwitz's results can not be given for standard Rosen fractions. 

\subsection{Legendre and Lenstra constants}
\label{sec: intro lenstra hurwitz}

In the early 1980s H.W. Lenstra conjectured that for regular continued fractions for almost all $x$ and all $z \in [0,1]$, the limit
\[
\lim_{n \rightarrow \infty} \frac{1}{n} \# \{ 1 \leq j \leq n | \Theta_j(x) \leq z \}
\]
exists and equals the distribution function $F$ defined by
$$
F(z) = \begin{cases}
\displaystyle{\frac{z}{\log 2}} \quad& \textrm{if } 0\leq z \leq \frac12\\
\\
\displaystyle{\frac{1-z+\log 2z}{\log 2}} \quad  &\textrm{if } \frac12 \leq z \leq 1.
\end{cases}
$$
A version of this conjecture had been formulated by W. Doeblin~\cite{Doeb} before. In 1983 W. Bosma \emph{et al.}~\cite{BJW} proved the Doeblin-Lenstra-conjecture for regular continued fractions and Nakada's $\alpha$-expansions for $\alpha \in \left[\frac12,1\right]$. 

A prominent feature of $F$ is that there exists a unique positive constant $\mathcal{L}$ such that $F(z)$ is linear for $z \in [0,\mathcal{L}]$.  For the RCF we have $\mathcal{L}=\frac12$. In~\cite{N2}, Nakada calls $\mathcal{L}$ the Lenstra constant and shows that for a large class of continued fractions this Lenstra constant is equal to the Legendre constant. Recently it was shown in~\cite{KNS} that the so-called mediant map has a Legendre constant larger than the Hurwitz constant $\mathcal{H}_q$, thus yielding a Hurwitz result. These results were obtained using the Lenstra constant. We derive a Hurwitz result for some $\alpha$-Rosen fractions.

The outline of this article is as follows. In Section~\ref{sec:geo} we give some general definitions for the natural extensions for $\alpha$-Rosen fractions and explain briefly how our method works. The even and odd case have different properties and we handle the details in two separate sections. The Borelt result for the different subcases of even $\alpha$-Rosen fractions are derived in Section~\ref{sec:even}, and the odd case is given in Section~\ref{sec:odd}. In Section~\ref{sec: Hurwitz rosen} we find the Lenstra constants $\mathcal{L}_\alpha$ for $\alpha$-Rosen fractions  and conclude for which values of $\alpha$ we can derive a Hurwitz result from this. 

\section{The natural extension for \texorpdfstring{$\alpha$}{alpha}-Rosen fractions}
\label{sec:geo}

In this section we introduce the necessary notation. Recall from~(\ref{def: chap 3 Talpha}) that 
$$
\displaystyle{T_\alpha(t) = \frac{\varepsilon(t)}{t} -d(t)\lambda}  \quad \textrm{with} \quad \varepsilon(t) = \textrm{sgn}(t)  \quad \textrm{and} \quad d(t) = \displaystyle{\left\lfloor \frac{\varepsilon}{\lambda t}+ 1-\alpha \right\rfloor}.
$$

\begin{Definition} \label{natural extension} For fixed $q$ and $\alpha$ the natural extension map $\mathcal{T}_\alpha :\Omega_\alpha \rightarrow \Omega_\alpha$ is given by
\label{def: T(x,y)_alpha}
\[ 
\mathcal{T_\alpha}(t,v) = \left( T_\alpha(t), \frac{1}{d(t)\lambda + \varepsilon(t) v} \right).
\]

\end{Definition}
The shape of the domain $\Omega_\alpha$ on which the two-dimensional map $\mathcal{T}_\alpha$ is bijective a.e.  was constructed in~\cite{DKS}.  We derive our results using a geometric method based on the natural extensions $\Omega_\alpha$. The shape of $\Omega_\alpha$ depends on $\alpha$ and we give the explicit formulas for each of the different cases in the appropriate sections; see e.g.~the beginning of Subsection~\ref{sec: even first} for $\Omega_\alpha$ when $q$ is even and $\alpha \in \left( \frac{1}{2},\frac{1}{\lambda}\right)$. The natural extension also depends on $q$, but for ease of notation we suppress this dependence and write $\Omega_\alpha$ in stead of $\Omega_{\alpha,q}$.

We use constants $l_n$ and $r_n$ to describe $\Omega_\alpha$, where
$$
\begin{aligned}
l_0&=(\alpha-1)\lambda &\textrm{ and } &l_n=T^n_\alpha(l_0), &\textrm{ for }n\geq0,\\
r_0&=\alpha \lambda &\textrm{ and } &r_n=T^n_\alpha(r_0), &\textrm{ for }n\geq0.
\end{aligned}
$$
The orbit of $-\frac{\lambda}{2}$ in the case $\alpha=\frac12$ plays an important role in describing the natural extensions. We define $\varphi_j = T_\frac{1}{2}^j\left(-\frac{\lambda}{2}\right)$.

We set $\delta_d = \frac{1}{(\alpha+d)\lambda}$ for all $d \geq 1$. So if $\delta_{d} < x \leq \delta_{d-1}$, we have $d(x) = d$ and $\varepsilon(x)=+1$. For $x$ with $-\delta_{d-1} \leq x < -\delta_d$ we have $d(x)=d$ and $\varepsilon(x)=-1$; also see~(\ref{eq: alpha digits}).

We often use the auxiliary sequence $B_n$ given by
\begin{equation}
\label{eq: bn chap 3}
B_0 =0, \quad B_1=1, \quad B_n=\lambda B_{n-1}-B_{n-2}, \quad \textrm{ for } n=2,3,\dots.
\end{equation}

Note that $B_n = \sin\frac{n \pi}{q}/\sin \frac{\pi}{q}$. If $q=2p$ for $p \geq 2$, we find from $\sin \frac{(p-1)\pi}{2p}=\sin \frac{(p+1)\pi}{2p}$ that
\begin{equation}
\label{eq: B_n even relations}
B_{p-1} = B_{p+1} = \frac{\lambda}{2}B_p \quad \textrm{and} \quad B_{p-2} = \left( \frac{\lambda^2}{2}-1 \right)B_p.
\end{equation}
Similarly in the odd case with $q=2h+3$ for $h \in \N$ we have that
\begin{equation}
\label{eq: B_n odd relations}
B_{h+1} = B_{h+2}, \quad B_h = (\lambda-1)B_{h+1} \quad \textrm{and} \quad B_{h-1} = \left(\lambda^2-\lambda-1 \right)B_{h+1}.
\end{equation}

We define for $x\in [l_0,r_0)$ with $\alpha$-Rosen expansion $[\varepsilon_1:d_1,\varepsilon_2:d_2,\dots]$  the \emph{future} $t_n$ and the \emph{past} $v_n$ of $x$ at time $n \geq 1$ by
$$
t_n =
[\varepsilon_{n+1}:d_{n+1},\varepsilon_{n+2}:d_{n+2},\ldots] \quad \textrm{and} \quad
v_n = [1:d_n,\varepsilon_{n}:d_{n-1},\ldots,\varepsilon_2:d_1].
$$ 
We set $t_0 = x$ and $v_0=0$.

 \begin{Rmk}
Note that $\mathcal{T_\alpha}^n(x,0) = (t_n,v_n)$ for $n\geq 0$. 
\end{Rmk} 

 The $(n-1)$st and $n$th approximation coefficients of $x$ can be given in terms of $t_n$ and $v_n$ (see Section 5.1.2 of~\cite{DK}) as
\begin{equation} \label{eq: formulae for theta}
\Theta_{n-1}=\Theta_{n-1}(t_n,v_n)=\frac{v_n}{1+t_nv_n} \quad \textrm{ and } \quad \Theta_{n}=\Theta_{n}(t_n,v_n)=\frac{\varepsilon_{n+1}t_n}{1+t_nv_n}.
\end{equation}
Often it is convenient to use $\Theta_{m}(t_{n+1},v_{n+1})=\Theta_{m+1}(t_n,v_n)$.

\begin{Lemma} 
The $(n+1)$st approximation coefficient of $x$ can be expressed in terms of $d_{n+1},\varepsilon_{n+1}$ and $\varepsilon_{n+2}$ by
\begin{equation}
\label{eq: formula Theta_n+1}
\Theta_{n+1}=\Theta_{n+1}(t_n,v_n)=\frac{\varepsilon_{n+2}(1-\varepsilon_{n+1}d_{n+1}t_n\lambda)(\lambda d_{n+1}
+\varepsilon_{n+1}v_n)}{1+t_nv_n}.
\end{equation}
\end{Lemma}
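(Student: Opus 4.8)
The plan is to reduce everything to the two approximation-coefficient formulas already recorded in~(\ref{eq: formulae for theta}), together with the one-step recursion built into the natural-extension map $\mathcal{T}_\alpha$. The key observation is the shift identity $\Theta_{m}(t_{n+1},v_{n+1})=\Theta_{m+1}(t_n,v_n)$ noted just before the statement: applying it with $m=n$ gives $\Theta_{n+1}(t_n,v_n)=\Theta_{n}(t_{n+1},v_{n+1})$, so I only need the already-known expression for $\Theta_n$, but evaluated one step later at $(t_{n+1},v_{n+1})$.

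First I would substitute the images $t_{n+1}$ and $v_{n+1}$ explicitly. Since $\mathcal{T}_\alpha^n(x,0)=(t_n,v_n)$ and $\varepsilon(t_n)=\varepsilon_{n+1}$, $d(t_n)=d_{n+1}$, Definition~\ref{def: T(x,y)_alpha} yields $t_{n+1}=T_\alpha(t_n)=\frac{\varepsilon_{n+1}}{t_n}-d_{n+1}\lambda$ and $v_{n+1}=\frac{1}{d_{n+1}\lambda+\varepsilon_{n+1}v_n}$. Plugging these into the second formula of~(\ref{eq: formulae for theta}), namely $\Theta_n(t_{n+1},v_{n+1})=\frac{\varepsilon_{n+2}t_{n+1}}{1+t_{n+1}v_{n+1}}$, reduces the lemma to a purely algebraic simplification of a quotient in $t_n$ and $v_n$.

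The only real computation is handling the denominator $1+t_{n+1}v_{n+1}$. Writing $t_{n+1}=\frac{\varepsilon_{n+1}-d_{n+1}\lambda t_n}{t_n}$ and clearing denominators, the numerator of $1+t_{n+1}v_{n+1}$ becomes $t_n(d_{n+1}\lambda+\varepsilon_{n+1}v_n)+\varepsilon_{n+1}-d_{n+1}\lambda t_n$, in which the $d_{n+1}\lambda t_n$ terms cancel and the remainder collapses to $\varepsilon_{n+1}(1+t_nv_n)$. Thus $1+t_{n+1}v_{n+1}=\frac{\varepsilon_{n+1}(1+t_nv_n)}{t_n(d_{n+1}\lambda+\varepsilon_{n+1}v_n)}$. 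Substituting this back and cancelling the $t_n$ factors, the claimed formula follows once I use $\varepsilon_{n+1}^2=1$ to rewrite $\frac{\varepsilon_{n+1}-d_{n+1}\lambda t_n}{\varepsilon_{n+1}}$ as $1-\varepsilon_{n+1}d_{n+1}\lambda t_n$.

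I do not expect any genuine obstacle here; the content is entirely bookkeeping. The one point that needs care is the sign arithmetic: one must use that $\varepsilon_{n+1}\in\{\pm1\}$, so that $1/\varepsilon_{n+1}=\varepsilon_{n+1}$, which is precisely what turns the quotient into the advertised product of the two linear factors $(1-\varepsilon_{n+1}d_{n+1}t_n\lambda)$ and $(\lambda d_{n+1}+\varepsilon_{n+1}v_n)$.
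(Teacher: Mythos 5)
Your proposal is correct and follows essentially the same route as the paper: both start from the shift identity $\Theta_{n+1}(t_n,v_n)=\Theta_n(t_{n+1},v_{n+1})$, invoke the formulas in~(\ref{eq: formulae for theta}), and substitute $t_{n+1}=\frac{\varepsilon_{n+1}}{t_n}-d_{n+1}\lambda$ and $v_{n+1}=\frac{1}{\lambda d_{n+1}+\varepsilon_{n+1}v_n}$. The only difference is organizational — the paper rewrites $\frac{1}{1+t_{n+1}v_{n+1}}$ as $\frac{\Theta_n}{v_{n+1}}$ to recycle the known expression for $\Theta_n$, whereas you expand the denominator directly and observe the cancellation to $\varepsilon_{n+1}(1+t_nv_n)$; both are the same bookkeeping.
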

\begin{proof}
First we use~(\ref{eq: formulae for theta}) to write
$$
\Theta_{n+1} =\Theta_n (t_{n+1},v_{n+1}) \frac{\varepsilon_{n+2} \, t_{n+1}}{1+t_{n+1}\,v_{n+1}} = \varepsilon_{n+2}\,t_{n+1}\,\frac{\Theta_n}{v_{n+1}}
= \frac{\varepsilon_{n+2}\,\varepsilon_{n+1}\,\frac{t_{n+1}t_n}{v_{n+1}}}{1+t_nv_n}.
$$
Then we use $t_{n+1}=\frac{\varepsilon_{n+1}}{t_n}-d_{n+1}\lambda$ and $v_{n+1} = \frac{1}{\lambda d_{n+1}+\varepsilon_{n+1}v_n}$ to find~(\ref{eq: formula Theta_n+1}).
\end{proof}

In view of~(\ref{eq: formulae for theta}) we define functions $f$ and $g$  on $[l_0,r_0)$ by 
\begin{equation} \label{eq: def f and g}
f(x)=\frac{\mathcal{H}_q}{1-\mathcal{H}_qx} \quad\textrm{and } \quad g(x)=\frac{|x|-\mathcal{H}_q}{\mathcal{H}_qx}.
\end{equation}
Then for points $(t_n,v_n) \in \Omega_\alpha$  one has
\begin{equation}
\label{eq: relations f g theta}
\Theta_{n-1} \leq  \mathcal{H}_q \Leftrightarrow v_n \leq f(t_n) \quad \textrm{ and } \quad
\Theta_{n} \leq \mathcal{H}_q \Leftrightarrow
\begin{cases}
v_n \leq g(t_n)  \quad \textrm{if } t_n < 0\\
\\
v_n \geq g(t_n) 
\quad \textrm{if } t_n \geq 0.
\end{cases}
\end{equation}

We define $\mathcal{D}$ as
\begin{equation}
\label{def:D}
\mathcal{D}=\left\{(t,v) \in \Omega_\alpha | \min \left\{ \frac{v}{1+tv},\frac{|t|}{1+tv}\right\} > \mathcal{H}_q \right\},
\end{equation}
so $\min \{ \Theta_{n-1},\Theta_n \} > \mathcal{H}_q$ if and only if $(t_n,v_n) \in \mathcal{ D}$.

See Figure~\ref{fig:q=4(i)} for an example of the position of $\mathcal{D}$ and of the graphs of $f$ and $g$ in~$\Omega_\alpha$ for $q=4$.


\section{Tong's spectrum for even \texorpdfstring{$\alpha$}{alpha}-Rosen fractions}
\label{sec:even}
Let $q=2p$ for $p \in \N^+$,  $p \geq 2$ and set $\lambda=2\cos\frac{\pi}{q}$. As shown in~\cite{DKS} there are three subcases for the shape of $\Omega_\alpha$: we need to study $\alpha=\frac12, \alpha \in \left(\frac{1}{2},\frac{1}{\lambda} \right)$ and $\alpha=\frac{1}{\lambda}$ separately. The following theorem was essential in the construction of the natural extensions and gives the ordering of the $l_n$ and $r_n$.

\begin{Theorem}
\label{th: position l r even}\cite{DKS}
Let $q=2p, p\in \N, p\geq 2$ and let $l_n$ and $r_n$ be defined as before. If $\frac{1}{2} <\alpha <  \frac{1}{\lambda}$, then we have that
$$
-1 <l_0 < r_1 <l_1 <\ldots < r_{p-2} <l_{p-2} <-\delta_1 <r_{p-1} <0 <l_{p-1} <r_0 <1,
$$
$d_p(r_0) = d_p(l_0)+1$ and $l_p = r_p$. If $\alpha=\frac12$, then we have that
$$
-1 < l_0<r_1=l_1 < \ldots <r_{p-2} = l_{p-2} <-\delta_1 <r_{p-1}=0 = l_{p-1}  <r_0<1.
$$
If $\alpha=\frac{1}{\lambda}$, then we have that
$$
-1< l_0=r_1 < l_1=r_2 < \ldots <l_{p-2}  =-\delta_1= r_{p-1} <0 < r_0=1.
$$
\end{Theorem}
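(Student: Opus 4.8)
The plan is to pin down the $\alpha$-Rosen expansions of the two endpoints $l_0=(\alpha-1)\lambda$ and $r_0=\alpha\lambda$, to convert the orbit points $l_n,r_n$ into explicit M\"obius images governed by the sequence $B_n$, and then to read off the asserted inequalities and the closure $l_p=r_p$ as identities among the $B_n$. I would treat $\frac12<\alpha<\frac1\lambda$ as the main case and recover the two boundary cases by specialization. First I would compute the leading digits: a direct estimate using $\alpha\in(\frac12,\frac1\lambda)$ shows $l_0\in(-1,-\delta_1)$, so $\varepsilon_1(l_0)=-1$, $d_1(l_0)=1$, while $r_0\in(\lambda/2,1)$ gives $\varepsilon_1(r_0)=+1$, $d_1(r_0)=1$ with $r_1=\frac1{r_0}-\lambda\in(-1,-\delta_1)$. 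The structural observation driving everything is that whenever an orbit point lies in $(-1,-\delta_1)$ the next digit is forced to be $(\varepsilon,d)=(-1,1)$, so there the map reduces to the single M\"obius transformation $M(t)=\frac{-1}{\lambda+t}$, whose matrix $\left(\begin{smallmatrix}0&-1\\1&\lambda\end{smallmatrix}\right)$ has iterates encoded by $B_n$ through the recursion $B_n=\lambda B_{n-1}-B_{n-2}$.

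The core of the argument is a simultaneous induction on $k$ establishing both the interleaving $l_{k-1}<r_k<l_k$ and the interval membership $l_k,r_k\in(-1,-\delta_1)$ for $k\le p-2$; the membership fixes the digit, and the digit justifies the next step. Using $l_k=M^{-k}(l_0)$ and $r_k=M^{-(k-1)}(r_1)$ together with $(M^{-1})^k=\left(\begin{smallmatrix}B_{k+1}&B_k\\-B_k&-B_{k-1}\end{smallmatrix}\right)$, I would write
$$l_k=\frac{B_{k+1}(\alpha-1)\lambda+B_k}{-B_k(\alpha-1)\lambda-B_{k-1}}$$
and the analogous closed form for $r_k$. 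Since $B_n=\sin\frac{n\pi}{q}/\sin\frac\pi q$ is strictly increasing for $0\le n\le p$, each required inequality then collapses to a linear inequality in $\alpha$ over the range $(\frac12,\frac1\lambda)$. The induction carries the points up to $l_{p-2},r_{p-1}$, at which stage one verifies $l_{p-2}<-\delta_1<r_{p-1}<0<l_{p-1}$, so the uniform digit pattern breaks: $l_{p-1}$ becomes positive (hence $\varepsilon_p(l_0)=+1$) while $r_{p-1}$ remains in $(-\delta_1,0)$ (hence $\varepsilon_p(r_0)=-1$ and a strictly larger digit).

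The last and most delicate step is the closure. Here I would evaluate $l_p$ and $r_p$ from the explicit formulas and invoke the even-$q$ identities \eqref{eq: B_n even relations}, in particular $B_{p-1}=B_{p+1}=\frac\lambda2 B_p$, which is exactly the trigonometric coincidence that forces the two orbits to collide; the same identities simultaneously deliver $l_p=r_p$ and the digit relation $d_p(r_0)=d_p(l_0)+1$. I expect this closure to be the main obstacle, since it is the point at which the special arithmetic of $\lambda=2\cos\frac\pi q$ genuinely enters and where the two otherwise-parallel inductions must be made to meet; everything before it is bookkeeping organized by $B_n$, whereas here the coincidence $B_{p-1}=B_{p+1}$ must be used in just the right place.

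Finally, the boundary cases follow by degenerating the same computation. For $\alpha=\frac12$ the symmetry of $T_{1/2}$ gives $T_{1/2}(-\lambda/2)=T_{1/2}(\lambda/2)$, so the two orbits coincide after one step, $l_n=r_n=\varphi_n$ for $n\ge1$, and the formula $\varphi_{p-1}=-B_0/B_1=0$ yields $r_{p-1}=l_{p-1}=0$; the weak inequalities degenerate to the stated equalities. For $\alpha=\frac1\lambda$ one computes $r_1=T_{1/\lambda}(1)=1-\lambda=l_0$, so the $r$-orbit is the $l$-orbit shifted by one index, giving $l_n=r_{n+1}$ throughout.
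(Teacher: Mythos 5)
The paper does not actually prove this theorem: it is imported verbatim from~\cite{DKS} (note the citation in the theorem header), so there is no in-paper argument to compare yours against. That said, your plan is essentially sound and uses exactly the toolkit the paper itself deploys later for related computations (the matrices $S^{-1}T$, Lemma~\ref{lem: st explicit}, the identities~(\ref{eq: B_n even relations}), and the closed forms~(\ref{eq: lp1 rp1 middle even})), so it is almost certainly the same route as~\cite{DKS}. I checked the key reductions and they come out clean: with $l_k$ written via your $B_n$-matrix formula one gets $l_{p-2}=\frac{\alpha\lambda^2-2}{(-\alpha\lambda^2+2\alpha+1)\lambda}$, the inequality $l_{p-2}<-\delta_1$ reduces to $\alpha\lambda<1$, the inequality $-\delta_1<r_{p-1}$ reduces to the same condition, and the identity $\frac{1}{l_{p-1}}=\frac{-1}{r_{p-1}}-\lambda$ (a one-line consequence of the two formulas in~(\ref{eq: lp1 rp1 middle even})) simultaneously forces $d_p(r_0)=d_p(l_0)+1$ and $l_p=r_p$. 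Your instinct that the closure is where $B_{p-1}=B_{p+1}=\frac{\lambda}{2}B_p$ enters is correct, since that identity is what produces those two closed forms.

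Three caveats. First, your opening claim $r_1\in(-1,-\delta_1)$ fails for $p=2$ (i.e.\ $q=4$): there one has $-\delta_1<r_1<0$ precisely because $\alpha<1/\lambda$, the chain $r_1<l_1<\ldots<r_{p-2}<l_{p-2}$ is empty, and $r_1=r_{p-1}$ already lies in the exit region. The paper itself flags that $q=4$ ``behaves slightly differently,'' so you should either restrict the induction to $p\geq 3$ and treat $p=2$ by direct computation, or phrase the base case more carefully. Second, ``collapses to a linear inequality in $\alpha$'' is an overstatement: comparing two M\"obius images of $\alpha$, or one against $-\delta_1=\frac{-1}{(\alpha+1)\lambda}$, gives quadratic inequalities after cross-multiplication (they happen to degenerate to $\alpha\lambda<1$ or $\alpha>\frac12$ in the cases above, but that must be verified, not assumed). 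Third, there is a mild circularity in using the closed form $l_k=(M^{-1})^k(l_0)$ to verify $l_k<-\delta_1$, since the closed form is only valid once you know $l_0,\dots,l_{k-1}$ all carry digit $(\varepsilon,d)=(-1,1)$; the standard fix is to define the formal orbit under the single branch $t\mapsto -1/t-\lambda$, prove it is increasing and stays below $-\delta_1$ up to index $p-2$ (monotonicity of the branch makes the interleaving $l_{k-1}<r_k<l_k$ propagate for free), and only then identify it with the true $T_\alpha$-orbit. None of these is a conceptual obstacle, but all three need to be addressed for a complete proof.
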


Let $k \geq 1 $ be an integer and put
\begin{equation}
\label{def hat t and v even}
(\tau_{k},\nu_k) = \begin{cases}
\mathcal{T}_\alpha^{-k(p-1)} \left( \frac{-2}{3\lambda} ,\lambda-1\right)  &\textrm{ if } \alpha = \frac{1}{\lambda};\\
\\
\mathcal{T}_\alpha^{-k(p-1)} \left(  -\delta_1 ,\lambda-1\right)   &\textrm{ otherwise.}
\end{cases}
\end{equation}

We prove the following result in this section.

\begin{Theorem}
\label{th: tong even general} Fix an even $q = 2p$ with $p \ge 3$ and let $\alpha \in \left[\frac12,\frac{1}{\lambda}\right]$.  There exists a positive integer $K$ such that for
every $G_q$-irrational number $x$ and all positive $n$ and $k >K$, 
one has
$$
\min \{ \Theta_{n-1},\Theta_n,\ldots,\Theta_{n+k(p-1)}\} < c_{k} \quad \textrm{with } c_{k}=\frac{-\tau_{k-1}}{1+\tau_{k-1}\nu_{k-1}}.  
$$
For every integer $k \geq 1$ we have $c_{k+1} < c_k $. Furthermore $\displaystyle{\lim_{k \rightarrow \infty} c_{k}} = \frac12$.
\end{Theorem}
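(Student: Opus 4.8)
The plan is to run the geometric ``dwell-time'' argument of Hartono--Kraaikamp and of~\cite{KSS}, but now with a variable threshold. For a constant $c>\mathcal{H}_q=\tfrac12$ I would replace $\mathcal{H}_q$ by $c$ in~(\ref{eq: def f and g}) and~(\ref{def:D}), obtaining curves $f_c,g_c$ and a region $\mathcal{D}_c\subset\Omega_\alpha$. By~(\ref{eq: formulae for theta}) one has, exactly as for $\mathcal{D}$, that $\min\{\Theta_{n-1},\Theta_n\}>c$ iff $(t_n,v_n)\in\mathcal{D}_c$. Applying this at each time and using $\mathcal{T}_\alpha^{j}(x,0)=(t_j,v_j)$ gives
\[
\min\{\Theta_{n-1},\ldots,\Theta_{n+k(p-1)}\}\geq c \iff (t_{n+j},v_{n+j})\in\overline{\mathcal{D}_c}\ \text{ for }\ 0\leq j\leq k(p-1),
\]
so the block-minimum exceeds $c$ precisely when the $\mathcal{T}_\alpha$-orbit remains in $\overline{\mathcal{D}_c}$ for $k(p-1)+1$ consecutive points. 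The whole theorem thereby becomes a statement about the maximal dwell time of orbits in $\mathcal{D}_c$.

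The value $c_k$ is singled out by the corner point $(\tau_{k-1},\nu_{k-1})$ of~(\ref{def hat t and v even}): since $\tau_{k-1}<0$, its definition reads $c_k=\frac{-\tau_{k-1}}{1+\tau_{k-1}\nu_{k-1}}=\Theta(\tau_{k-1},\nu_{k-1})$, so $(\tau_{k-1},\nu_{k-1})$ sits on the boundary curve $g_{c_k}$, i.e.\ on $\partial\mathcal{D}_{c_k}$. I would prove the inequality by showing that for $c=c_k$ the open region $\bigcap_{j=0}^{k(p-1)}\mathcal{T}_\alpha^{-j}(\mathcal{D}_{c_k})$ is empty, with $(\tau_{k-1},\nu_{k-1})$ the extremal boundary point whose forward orbit realises the maximal dwell. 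A $G_q$-irrational orbit avoids the boundary, so it cannot occupy all $k(p-1)+1$ points inside the open $\mathcal{D}_{c_k}$, forcing $\min\{\Theta_{n-1},\ldots,\Theta_{n+k(p-1)}\}<c_k$.

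The heart of the matter is this emptiness, and it is where Theorem~\ref{th: position l r even} does the work. Using the ordering of the $l_n,r_n$ I would track how $\mathcal{T}_\alpha$ carries $\mathcal{D}_{c_k}$ branch by branch: over one period of $p-1$ steps the surviving part of $\mathcal{D}_{c_k}$ is mapped onto a strictly smaller region, and the backward orbit $(\tau_{k-1},\nu_{k-1})=\mathcal{T}_\alpha^{-(k-1)(p-1)}(-\delta_1,\lambda-1)$ is exactly the point whose forward orbit passes through the base point $(-\delta_1,\lambda-1)$ (or $(-\tfrac{2}{3\lambda},\lambda-1)$ when $\alpha=\tfrac1\lambda$) and stays in $\overline{\mathcal{D}_{c_k}}$ for the maximal $k(p-1)$ steps before leaving. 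This must be carried out separately for $\alpha=\tfrac12$, $\alpha\in(\tfrac12,\tfrac1\lambda)$ and $\alpha=\tfrac1\lambda$, since $\Omega_\alpha$ has a different shape (from~\cite{DKS}) in each; the integer $K$ is what guarantees that for $k>K$ the backward orbit is well defined and $\mathcal{D}_{c_k}$ has the generic shape on which the period-$(p-1)$ shrinking applies. I expect this branchwise bookkeeping across the subcases, rather than any single estimate, to be the main obstacle.

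Monotonicity then admits a clean dynamical reading: the maximal dwell time in $\mathcal{D}_c$ is non-increasing in the threshold $c$ (a larger $c$ shrinks the region), and since $c_k$ is the threshold at which the dwell equals $k(p-1)$ while $c_{k+1}$ corresponds to the strictly larger dwell $(k+1)(p-1)$, we must have $c_{k+1}<c_k$; concretely this is the statement that one backward period $\mathcal{T}_\alpha^{-(p-1)}$ strictly decreases $\Theta=\frac{-t}{1+tv}$ along the orbit $(\tau_k,\nu_k)$. For the limit, $(\tau_k,\nu_k)$ is a backward $\mathcal{T}_\alpha^{p-1}$-orbit contracting to a fixed point $(\tau_\infty,\nu_\infty)$ on the arc $g_c$; the decreasing sequence $c_k$ is bounded below by $\mathcal{H}_q=\tfrac12$, hence converges, and a short computation with the relations~(\ref{eq: B_n even relations}) identifies $(\tau_\infty,\nu_\infty)$ as the purely periodic point realising the Hurwitz value, giving $\lim_{k\to\infty}c_k=\Theta(\tau_\infty,\nu_\infty)=\tfrac12$.
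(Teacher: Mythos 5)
Your reformulation in terms of variable-threshold regions $\mathcal{D}_c$ and dwell times is a faithful repackaging of the paper's strategy (the paper works with the fixed region $\mathcal{D}=\mathcal{D}_{1/2}$ and bounds $\Theta_n$ on the set of points surviving $k$ rounds, which amounts to the same thing), but as written the proposal is an outline rather than a proof: every decisive step is announced as a goal and none is established. The emptiness of $\bigcap_{j=0}^{k(p-1)}\mathcal{T}_\alpha^{-j}(\mathcal{D}_{c_k})$ \emph{is} the theorem, and proving it requires exactly the paper's machinery: the determination of the components of $\mathcal{D}$ (Lemma~\ref{th: shape D} shows there are up to four, not one), the explicit M\"obius computation $\mathcal{T}_\alpha^{p-1}=(S^{-1}T)^{p-2}S^{-2}T$ on the region where $d=2$ followed by $p-2$ digits $(-1:1)$, the identification of the fixed point $\left(\frac{-1}{\lambda+1},\lambda-1\right)$ of $\mathcal{T}_\alpha^{p-1}$ (Corollary~\ref{cor: even fixed point}), the ordering $\Theta_{n-1}\geq\Theta_n\leq\Theta_{n+1}$ on the critical region so that the block minimum is controlled by $\Theta_n$ alone, and the flushing statement of Theorem~\ref{th: flushing}. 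None of this is carried out, and you flag the ``branchwise bookkeeping'' as the main obstacle without doing it.

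Two points are actually mischaracterized rather than merely deferred. First, the role of $K$: the backward orbit $(\tau_k,\nu_k)$ is well defined for every $k\geq 1$, and $\mathcal{D}_{c_k}$ does not change shape in any relevant way. The integer $K$ exists because $\mathcal{D}$ has components ($\mathcal{D}_3$, and for $q=4$ the component $\mathcal{D}_2$) lying on the \emph{other} side of the fixed point of $\mathcal{T}_\alpha^{p-1}$, whose points move \emph{away} from the fixed point; on these components $\Theta_n$ can exceed $c_k$ for small $k$, and the whole argument only closes because Lemma~\ref{lem: d flushen} shows such points are expelled after a uniformly bounded number $K$ of rounds. Without isolating and controlling these components your ``period-$(p-1)$ shrinking'' is simply false on part of $\mathcal{D}_{c_k}$. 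Second, the case $\alpha=\frac{1}{\lambda}$: the base point in~(\ref{def hat t and v even}) changes to $\left(\frac{-2}{3\lambda},\lambda-1\right)$ precisely because the region $\mathcal{D}$ sits differently in $\Omega_{1/\lambda}$; the paper handles this case not by redoing the orbit analysis but by the mirror isomorphism $M$ with $\Omega_{1/2}$ together with the time-reversal identity $\Theta_{n+j}=\tilde\Theta_{m-j-1}$, reducing it to the known $\alpha=\frac12$ result. Your plan to ``carry it out separately'' is not wrong in principle, but it leaves unexplained why the same $c_k$ (hence the same spectrum) comes out, which is exactly what the time-reversal argument delivers for free. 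The monotonicity and limit claims are fine in outline, though they rest on $\nu_k=\lambda-1$ being constant along the backward orbit and on $\lim_k\tau_k=\frac{-1}{\lambda+1}$, neither of which you verify.
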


The case $\alpha = \frac12$ was proven in~\cite{KSS}. The proof for $\alpha \in \left(\frac12,\frac{1}{\lambda}\right)$ is given in Section~\ref{sec: even first}. In Section~\ref{sec: even lambda} we use that the natural extension for the case $\alpha=\frac{1}{\lambda}$ is the mirror image of the one for $\alpha=\frac12$ to prove Theorem~\ref{th: tong even general} for $\alpha=\frac{1}{\lambda}$.

\subsection{Even case with \texorpdfstring{$\alpha \in  (\frac12, \frac{1}{\lambda})$}{alpha in the interval}}
\label{sec: even first}
In this section we assume that $\alpha \in  (\frac12, \frac{1}{\lambda})$. In~\cite{DKS} the shape of $\Omega_\alpha$was determined.  

\begin{Definition}
Set 
$$
\begin{aligned}
J_{2n-1} &= [l_{n-1},r_n)  \quad \textrm{and} \quad J_{2n}=[r_n,l_n) \quad \textrm{for}\quad  n=1,2,\dots,p-1, \\
J_{2p-1}&=[l_{p-1},r_0)  \quad \textrm{and} \quad \\
H_1 &=\, \frac{1}{\lambda+1}, \quad  H_2 \, =\, \frac{1}{\lambda}  \quad \textrm{and} \quad H_n  =\, \frac{1}{\lambda-H_{n-2}} \quad  \text{for} \quad  n = 3,4,\dots ,2p-1.\\
\end{aligned}
$$

The shape of $\Omega_\alpha$ upon which $\mathcal{T}_\alpha$ is bijective a.e.~is given by
 $$
   \Omega_\alpha = \bigcup_{n=1}^{2p-1} J_n \times [0,H_n].
$$
\end{Definition}

We define
$$
\Omega_\alpha^+ = \left\{ (t,v) \in \Omega_\alpha \, |\, t > 0 \right\}.
$$
 
 
From the above description of the natural extension $\Omega_{\alpha}$ it follows that the natural extension has $2p-1$ heights $H_1,\dots,H_{2p-1}$. In~\cite{DKS} is is shown that $H_{i+1} > H_i$ for $i=1,\dots, 2p -2$,
$$H_{2p-3}=\lambda-1,\quad H_{2p-2}=\frac{\lambda}{2} , \quad H_{2p-1}=1,$$ 
and
\begin{equation}
\label{eq: lp1 rp1 middle even}
l_{p-2} = \frac{ \alpha\lambda^2-2}{(-\alpha\lambda^2+2\alpha+1)\lambda} ,
l_{p-1}=\frac{(2\alpha-1)\lambda}{2-\alpha\lambda^2}  \textrm{ and } r_{p-1}=-\frac{(2\alpha-1)\lambda}{2-(1-\alpha)\lambda^2}.
\end{equation}

Note that Theorem~\ref{th: tong even general} gives a result for  $q\geq 6$. The case $q=4$ behaves slightly differently, which we show in the following subsection.

\subsubsection{The case \texorpdfstring{$q = 4$}{q = 4}} 
In this subsection we assume that $q=2p=4$, so $\lambda = \sqrt{2}$. In this case we have $H_1 = \sqrt{2}-1, H_2=\frac{1}{2}\sqrt{2}$ and $H_3=1$. We prove the following result.

\begin{Theorem}
\label{th: tong even q4} Let $\lambda=2\cos\frac{\pi}{4}=\sqrt{2}$, let $\alpha \in \left(\frac12,\frac{1}{\lambda}\right)$ and let $\tau_k$ be as given in~(\ref{def hat t and v even}).  There exists a positive integer $K$ such that for
every $G_q$-irrational number $x$ and all positive $n$ and $k >K$,
one has
$$
\min \{ \Theta_{n-1},\Theta_n,\ldots,\Theta_{n+k}\} < c_{k} \quad \textrm{with } c_{k}=\frac{\sqrt{2}-1}{1+\tau_{k-1}(\sqrt{2}-1)}.  
$$
For every integer $k \geq 1$ we have $c_{k+1} < c_k $. Furthermore $\displaystyle{\lim_{k \rightarrow \infty} c_{k}} = \frac12$.
\end{Theorem}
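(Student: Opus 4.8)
The plan is to specialise the geometric method of Section~\ref{sec:geo} to the three-strip domain $\Omega_\alpha=\bigcup_{n=1}^{3}J_n\times[0,H_n]$, where $H_1=\lambda-1$, $H_2=\frac{\lambda}{2}$, $H_3=1$. The first step is to convert the block inequality into a trapping condition for a single orbit. Writing $f_c(x)=\frac{c}{1-cx}$ for the threshold-$c$ version of $f$ in~(\ref{eq: def f and g}), the relation $\Theta_m(t_{n+1},v_{n+1})=\Theta_{m+1}(t_n,v_n)$, iterated, yields $\Theta_{n+j}=\Theta_{n-1}(t_{n+j+1},v_{n+j+1})$, and hence
\[
\min\{\Theta_{n-1},\Theta_n,\dots,\Theta_{n+k}\}=\min_{0\le j\le k+1}\Theta_{n-1}(t_{n+j},v_{n+j}).
\]
Because $\Theta_{n-1}(t,v)>c$ is equivalent to $v>f_c(t)$, the block minimum exceeds $c$ exactly when the $k+2$ consecutive orbit points $(t_{n+j},v_{n+j})$, $j=0,\dots,k+1$, all lie strictly above the graph of $f_c$. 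A pleasant feature is that only $f_c$ (and not the companion $g$) enters, so the whole question reduces to: how long can a $\mathcal{T}_\alpha$-orbit stay in the region $\{v>f_c(t)\}$? I would then set $c_k$ to be the supremum over $x$ of the block minimum, equivalently the smallest $c$ beyond which no orbit remains above $f_c$ for $k+2$ consecutive steps.

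Next I would locate the bottleneck, which is the corner $(-\delta_1,\lambda-1)=(-\delta_1,H_1)$ sitting at the top of the strip $J_1=[l_0,r_1)$; here $-\delta_1$ is the digit-change point separating $d=1$ from $d=2$ on the negative axis. Tracking its backward orbit $(\tau_{k-1},\nu_{k-1})=\mathcal{T}_\alpha^{-(k-1)}(-\delta_1,\lambda-1)$ from~(\ref{def hat t and v even}), I would show that the extremal configuration for a block of length $k+2$ is the point $(\tau_{k-1},\lambda-1)$: its future coordinate $\tau_{k-1}$ is chosen so that $k-1$ forward steps return the future to the digit jump at $-\delta_1$, after which $\Theta_{n-1}$ necessarily drops, while the past coordinate is pushed up to the maximal height $v=\lambda-1=H_1$ of the strip, making $\Theta_{n-1}$ as large as possible. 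The minimum of the block is then attained at the initial point, giving
\[
c_k=\Theta_{n-1}(\tau_{k-1},\lambda-1)=\frac{\lambda-1}{1+\tau_{k-1}(\lambda-1)}.
\]
This is precisely where $q=4$ must be handled apart from $p\ge3$: with only three strips there are no intermediate $(-1:1)$-blocks for the orbit to traverse, the combinatorics degenerate, and the extremal height locks onto the fixed value $H_1=\lambda-1$ rather than the iterated height $\nu_{k-1}$ appearing in Theorem~\ref{th: tong even general}.

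The hard part will be the matching upper bound: that for every $c>c_k$ no orbit can remain above $f_c$ for $k+2$ steps, so that the block minimum is genuinely below $c_k$. I would establish this by a return-map analysis, intersecting the trapping region $\{v>f_c(t)\}$ with each of the three strips and following its images under $\mathcal{T}_\alpha$; the convexity and monotonicity of $f_c$, together with the monotone branches of $\mathcal{T}_\alpha$, should force the admissible region to be squeezed by one step per iteration, collapsing along the backward orbit of $(-\delta_1,\lambda-1)$ so that it is empty after $k+2$ steps once $c>c_k$. This analysis is also what produces the threshold $K$: for small $k$ the backward iterates $\tau_{k-1}$ still lie in the transient induced by the asymmetry of $\Omega_\alpha$ when $\alpha\neq\frac12$, where the squeezing picture does not yet apply verbatim, and the clean formula for $c_k$ is valid only once $k>K$, with $K$ the length of this transient determined by a finite check. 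I expect this squeezing estimate, and the bookkeeping of the three branches near $-\delta_1$, to be the principal obstacle.

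Finally I would read off the remaining assertions from the closed form of $c_k$. Since $c\mapsto\frac{\lambda-1}{1+c(\lambda-1)}$ is strictly decreasing, $c_{k+1}<c_k$ follows once I show that the backward-orbit coordinate $\tau_{k-1}$ is strictly increasing in $k$, a monotonicity property of $\mathcal{T}_\alpha^{-1}$ on the relevant branch. For the limit, $\tau_{k-1}$ converges to the first coordinate $\tau_\infty$ of the attracting fixed point of the backward map; a short computation gives $\tau_\infty=1-\sqrt2=-(\lambda-1)$, whence
\[
\lim_{k\to\infty}c_k=\frac{\lambda-1}{1-(\lambda-1)^2}=\frac12=\mathcal{H}_q,
\]
in agreement with Theorem~\ref{thm: article Borel alpha}. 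Best possibility, and the strictness of the inequality, would come from noting that $c_k$ is attained only by the boundary corner $(-\delta_1,\lambda-1)$, which is not the orbit of any $G_q$-irrational; genuine irrationals stay strictly inside, giving $<c_k$, while following the extremal backward orbit ever more closely realises block minima arbitrarily near $c_k$ from below.
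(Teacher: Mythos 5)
Your setup is sound and is essentially the paper's geometric method in a different dress: instead of studying the region $\mathcal{D}$ cut out by the pair $f,g$ at a single time, you track how long an orbit stays above the single curve $f_c$; since $v_n>g(|t_n|)$-type conditions are exactly the pullback of $v_{n+1}>f(t_{n+1})$ under $\mathcal{T}_\alpha$, the two formulations are equivalent. Your identification of the extremal corner $(-\delta_1,\lambda-1)$, the formula $c_k=\Theta_{n-1}(\tau_{k-1},\lambda-1)$, the monotonicity of $\tau_k$, and the limit $\lim c_k=\frac{\lambda-1}{1-(\lambda-1)^2}=\frac12$ via the fixed point $1-\sqrt2$ all match the paper. (One smaller omission: the claim that the block minimum is controlled by $\Theta_{n-1}$ at the initial point needs the ordering $\Theta_{n-1}\le\Theta_n\le\Theta_{n+1}$ on the relevant region, which requires an explicit computation --- the paper checks $v_n\le -t_n$ and $v_n\le\frac{7t_n+2\sqrt2}{1+2\sqrt2\,t_n}$ --- rather than following from the corner position alone.)

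The genuine gap is in what you yourself call the hard part. Your squeezing argument assumes the set of points whose orbit stays above $f_c$ for several steps collapses \emph{along the backward orbit of} $(-\delta_1,\lambda-1)$, and you attribute the threshold $K$ to the backward iterates $\tau_{k-1}$ lying in some transient. That is not the right mechanism: the $\tau_k$ are monotone from $k=0$ on and $c_k$ is well defined and decreasing for every $k\ge1$. The actual reason the bound only holds for $k>K$ is that for $\alpha\ge\alpha_0=\frac{4+\sqrt2}{8}$ the region where two consecutive orbit points lie above $f$ has a \emph{second connected component} $\mathcal{D}_2$, located near $(r_1,H_2)$ on the other side of the fixed point $(1-\sqrt2,\sqrt2-1)$, nowhere near the backward orbit of the corner. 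Points there move away from the fixed point in $t$ but toward it in $v$, so they can stay above $f$ for several consecutive steps; one must show separately that they are flushed in a bounded number $K$ of steps (the paper does this by showing the $v$-coordinate contracts to $\sqrt2-1$ by a factor at most $\frac{\sqrt2-1}{\sqrt2}$ per step, while $\mathcal{D}_2$ is bounded away from $v=\sqrt2-1$). Without locating $\mathcal{D}_2$ and running this flushing argument, your upper bound fails for $\alpha\in[\alpha_0,\tfrac1{\sqrt2})$, and the statement ``the admissible region is squeezed by one step per iteration'' is simply false on that component.
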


We start by determining the shape of the region ${\mathcal D}\subset \Omega_{\alpha}$, where
$\min \{ \Theta_{n-1},\Theta_n\} > \frac{1}{2}$ as defined in~(\ref{def:D}).  

\begin{Lemma}\label{lem:q=4} Put $\alpha_0:=\frac{4+\sqrt{2}}{8} = 0.676\dots$. For $\alpha\in \left( \frac{1}{2},\alpha_0\right)$ the region ${\mathcal D}$ consists of one
 component $\mathcal{ D}_1$, which is bounded by the lines $t=l_0$, $v=H_1$, and the graph of $f$; see Figure~\ref{fig:q=4(i)}.
If $\alpha\in \left[ \alpha_0,\, \frac{\sqrt{2}}{2}\right)$, then ${\mathcal D}$ consists of two  components: ${\mathcal D}_1$ and ${\mathcal D}_2$, where $\mathcal{ D}_2$
is the region bounded by the lines $t=r_1$, $v=H_2$, and the graph of $g$; see Figure~\ref{fig:q=4(ii)}.
\end{Lemma}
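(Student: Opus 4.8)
The plan is to compute $\mathcal D$ directly from (\ref{def:D}) and (\ref{eq: relations f g theta}) and read off its shape strip by strip. Since $q=4$ is even, $\mathcal H_q=\tfrac12$, so (\ref{eq: def f and g}) gives $f(t)=\frac{1}{2-t}$, together with $g(t)=2-\frac1t$ for $t>0$ and $g(t)=-2-\frac1t$ for $t<0$. By (\ref{eq: relations f g theta}) a point $(t,v)\in\Omega_\alpha$ lies in $\mathcal D$ exactly when $v>f(t)$ and, in addition, $v>g(t)$ if $t<0$ but $v<g(t)$ if $t>0$; thus $\mathcal D\cap\{t<0\}=\{v>\max(f(t),g(t))\}$ and $\mathcal D\cap\{t>0\}=\{f(t)<v<g(t)\}$. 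I would first dispose of $t\ge 0$ (the strips over $[0,l_1)\subset J_2$ and over $J_3$): a one-line computation gives $f(t)-g(t)=\frac{2(t-1)^2}{t(2-t)}\ge 0$ on $(0,1)$, with equality only at $t=1$, and since $r_0<1$ this forces $g(t)<f(t)$, so there are no points of $\mathcal D$ with $t\ge 0$. Hence all of $\mathcal D$ sits over $J_1=[l_0,r_1)$ and the left half $[r_1,0)$ of $J_2$.

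Next, for $t<0$ I would locate where $f$ and $g$ exchange dominance. Solving $f(t)=g(t)$ reduces to $t^2-2t-1=0$, whose negative root is $t=1-\sqrt2$, and there $f(1-\sqrt2)=g(1-\sqrt2)=\frac{1}{1+\sqrt2}=\sqrt2-1=H_1$. A sample value shows $f>g$ on $(l_0,1-\sqrt2)$ and $g>f$ on $(1-\sqrt2,0)$. On $J_1$, of height $H_1$, the operative lower boundary is therefore $f$; since $f$ is increasing with $f(1-\sqrt2)=H_1$, the set $\{\,l_0\le t<1-\sqrt2,\ f(t)<v\le H_1\,\}$ is precisely the curvilinear triangle bounded by $t=l_0$, $v=H_1$ and the graph of $f$, i.e.\ $\mathcal D_1$. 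For $t\in(1-\sqrt2,r_1)$ the operative boundary $g$ already exceeds $H_1$, so $\mathcal D_1$ terminates at $t=1-\sqrt2$. Here I use $l_0<1-\sqrt2<r_1$, which follows from (\ref{eq: lp1 rp1 middle even}) and Theorem~\ref{th: position l r even}.

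Finally I would analyse the strip $[r_1,0)$ of $J_2$, of height $H_2=\frac{\sqrt2}{2}$. Because $r_1>1-\sqrt2$, the operative lower boundary throughout is $g$, which is increasing with $g(t)\to\infty$ as $t\to0^-$. Letting $t^\ast$ be the point with $g(t^\ast)=H_2$, I would solve to get $t^\ast=-\frac{4-\sqrt2}{7}$. The band $\{g(t)<v\le H_2\}$ is nonempty exactly for $t<t^\ast$, so a second component appears iff $r_1<t^\ast$, and when it does it is bounded by $t=r_1$, $v=H_2$ and the graph of $g$; this is $\mathcal D_2$. Using the explicit value $r_1=-\frac{(2\alpha-1)\sqrt2}{2\alpha}$ coming from (\ref{eq: lp1 rp1 middle even}), the equation $r_1=t^\ast$ solves to $\alpha=\frac{4+\sqrt2}{8}=\alpha_0$; since $r_1$ decreases in $\alpha$, one gets $r_1<t^\ast$ precisely for $\alpha>\alpha_0$. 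As $\mathcal D_1\subset\{t<1-\sqrt2\}$ and $\mathcal D_2\subset\{t\ge r_1>1-\sqrt2\}$, the two pieces are disjoint, which yields the stated one-component description for $\alpha\in(\tfrac12,\alpha_0)$ and the two-component description for $\alpha\in[\alpha_0,\tfrac{\sqrt2}{2})$.

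The step I expect to be the main obstacle is the bookkeeping: on each subinterval one must correctly decide which of $f$ and $g$ is the operative boundary and whether it is truncated by the height $H_1$ or $H_2$, and then match the breakpoints $1-\sqrt2$ and $t^\ast$ against the $\alpha$-dependent quantities $l_0$ and $r_1$ so that the ordering $l_0<1-\sqrt2<r_1$ holds and the algebra collapses cleanly to $\alpha_0=\frac{4+\sqrt2}{8}$. Once these comparisons are pinned down, the description of $\mathcal D_1$ and $\mathcal D_2$ is immediate.
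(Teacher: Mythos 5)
Your proposal is correct and follows essentially the same route as the paper: dispose of $t\ge 0$ by showing $f\ge g$ there, locate the crossing $(1-\sqrt2,\sqrt2-1)$ of $f$ and $g$ at height $H_1$ with the ordering $l_0<1-\sqrt2<r_1$ to get $\mathcal D_1$, and compare $g$ with $H_2$ on $[r_1,0)$ to find the threshold $\alpha_0=\frac{4+\sqrt2}{8}$ (your comparison $r_1<t^\ast$ is just the paper's condition $g(r_1)\le H_2$ read through the monotonicity of $g$). The only cosmetic difference is that your version is more explicit about the strip-by-strip bookkeeping, which the paper compresses into a few sentences.
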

\begin{proof}
Recall that ${\mathcal H}_q=\frac{1}{2}$. First assume that $t\geq 0$. The graphs of $f$ and $g$ do not intersect for $t\leq r_0$. Thus every point $(t_n,v_n)\in \Omega_\alpha^+$  is below the graph of $f$ or above the graph of $g$. By~(\ref{eq: relations f g theta}) we have that $\min \{ \Theta_{n-1},\Theta_n\} <\frac{1}{2}$.

Assume that $t<0$. The graphs of $f$ and $g$ intersect with the line $v=H_1$ in the point $(1-\sqrt{2}, \sqrt{2}-1)$, and we find that $l_0 <-\frac{1}{2}< -\delta_1<1-\sqrt{2}<r_1$ for $\alpha \in (\frac{1}{2},\frac{1}{\lambda})$. Since both $f$ and $g$ are monotonically increasing we find that for $l_0 <t<r_1$ the intersection of $\mathcal{ D}$ and $\Omega_\alpha$ is ${\mathcal D}_1$.

\begin{figure}[!ht]
\includegraphics[height=50mm]{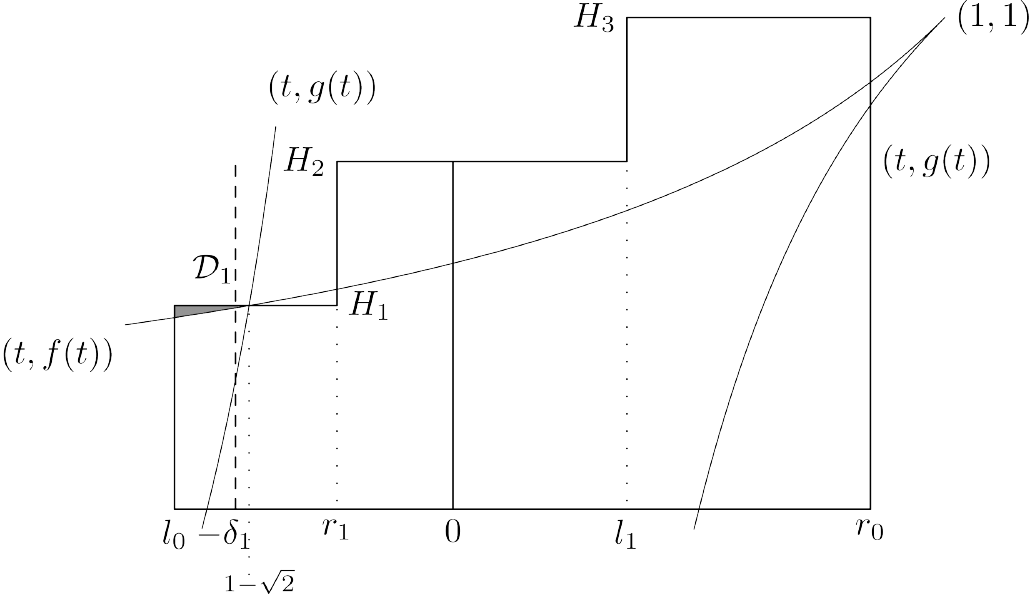}
\caption{In this picture $q=4$ and  $\alpha =0.6 < \alpha_0 = 0.676\dots $. The region ${\mathcal D}$ consists of one  component $\mathcal{ D}_1$.}
\label{fig:q=4(i)}
\end{figure}

One easily checks that $g(r_1)\leq H_2$ if and only if $\alpha \geq \alpha_0=\frac{4+\sqrt{2}}{8}=0.67677\dots$. So we find that for $\alpha > \alpha_0$ and $r_1<t<0$ the intersection of $\mathcal{ D}$ and $\Omega_\alpha$ is ${\mathcal D}_2$ (for $\alpha = \alpha_0$ the region ${\mathcal D}_2$ consists of exactly one point, $(r_1,g(r_1))$); see Figure~\ref{fig:q=4(ii)}.
\end{proof}

\begin{figure}[!ht]
\includegraphics[height=60mm]{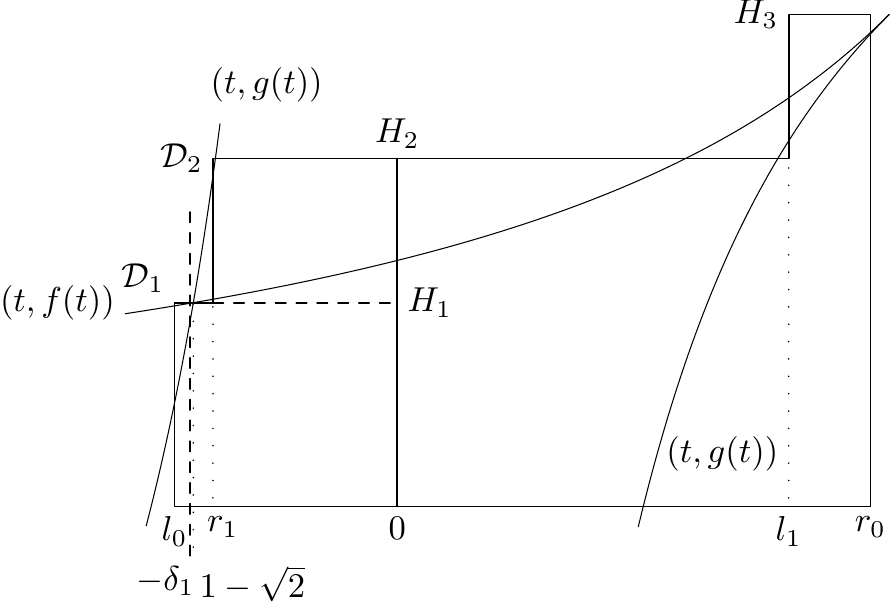}
\caption{In this picture  $q=4$ and $\alpha = 0.68 >  \alpha_0= 0.676\dots $. The region ${\mathcal D}$ consists of two  components.}
\label{fig:q=4(ii)}
\end{figure}
%

\emph{Proof of Theorem~\ref{th: tong even q4}.} Recall that $(\tau_{k},\nu_k) = \mathcal{T}_\alpha^{-k} \left(  -\delta_1 ,\lambda-1\right)$. For $q=4$ we find $\tau_1 = \frac{-1}{2\sqrt{2}-\delta_1}$, $\tau_k=[(-1:2)^{k},-\delta_1]$ and $\nu_k=\sqrt{2}-1$ for all $k$. Since $T_\alpha$ is strictly increasing on the interval $[-\delta_1,-\delta_2)$ we find $\tau_{k-1} <\tau_k$. In this case $\displaystyle{\lim_{k\rightarrow \infty} \tau_k = [\overline{(-1:2)}]}= \frac{-1}{\sqrt{2}+1}=1-\sqrt{2}$. We conclude that $\displaystyle{\lim_{k\rightarrow \infty} \tau_k =1-\sqrt{2}}.$ 

We find $c_{k+1} < c_k $, $\displaystyle{\lim_{k\rightarrow \infty} c_k} = \frac{\sqrt{2}-1}{1+(1-\sqrt{2})(\sqrt{2}-1)}= \frac12$ and conclude that $c_k >\frac12$ for all $k$.

We now focus on the orbit of points in $\mathcal{ D}$ and start with $\mathcal{ D}_1$. First note that ${\mathcal T}_{\alpha}\left( [l_0,-\delta_1)\times [0,H_1]\right) = [l_1, r_0)\times [H_2,1]\subset \{ (t,v)\in \Omega_{\alpha} |\, t\geq 0\}$. Therefore, if $(t_n,v_n)\in {\mathcal D}_1$ and $t_n\leq -\delta_1$, we have that $\min \{ \Theta_{n-1},\Theta_n\} >\frac{1}{2}$, while $\Theta_{n+1}<\frac{1}{2}$. For these points we have proven Theorem~\ref{th: tong even q4} with $K=1$.

Note that $(1-\sqrt{2},\sqrt{2}-1)$ is a fixed-point of ${\mathcal T}_{\alpha}$. In particular, we have that  $(1-\sqrt{2},\sqrt{2}-1)$ is a repellent fixed-point for the first-coordinate map of ${\mathcal T}_{\alpha}$, and an attractive fixed point for the second coordinate map of ${\mathcal T}_{\alpha}$. Thus points $(t,v) \in \mathcal{ D}_1$ with $t > -\delta_1$ move ``left and up'' under $\mathcal{T}_\alpha$. Noting that the graph of $f$ is strictly increasing, we have that the region $\{ (t,v)\in {\mathcal D}_1|\, t \geq -\delta_1\}$ is mapped by ${\mathcal T}_{\alpha}$ inside ${\mathcal D}_1$.

Setting ${\mathcal D}_{1,k}:=\{ (t,v)\in {\mathcal D}_1|\, \tau_{k-1}\leq t <\tau_k\}$, for $k\geq 1$, and ${\mathcal D}_{1,0}:=\{ (t,v)\in {\mathcal D}_1|\, t<-\delta_1\}$, by definition of $\tau_k$ and ${\mathcal D}_{1,k}$, we have for $k\geq 1$ that 
$$
(t,v)\in {\mathcal D}_{1,k} \quad \textrm{implies} \quad {\mathcal T}_{\alpha}(t,v)\in {\mathcal D}_{1,k-1}.
$$

We determine the maximum of $\Theta_{n-1},\Theta_n$ and $\Theta_{n+1}$ on $\mathcal{ D}_{1,k}$ for $k \geq 1$.

\begin{Lemma}
\label{lem: theta on d q4}
Let $k\geq 1$ and $(t_n,v_n) \in  \mathcal{ D}_{1,k}$. Then
\begin{equation}
\label{eq: theta order q4}
\Theta_{n-1} \leq \Theta_n \leq \Theta_{n+1}. 
\end{equation}
\end{Lemma}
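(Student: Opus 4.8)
The plan is to verify the chain of inequalities \eqref{eq: theta order q4} by working directly with the explicit formulas for the approximation coefficients in terms of $(t_n,v_n)$ and exploiting the geometry of the region $\mathcal{D}_{1,k}$. For $k\geq 1$ a point $(t_n,v_n)\in\mathcal{D}_{1,k}$ satisfies $\tau_{k-1}\leq t_n<\tau_k$, so in particular $t_n<-\delta_1$ fails and we have $t_n\geq-\delta_1$ with $t_n<0$ (since all the $\tau_j$ lie in $(1-\sqrt{2},-\delta_1)$), and $v_n\leq H_1=\sqrt{2}-1$. These sign and size constraints are what make the ordering go through, so the first step is to pin them down precisely from the definition of $\mathcal{D}_{1,k}$ and from the location of the fixed point $(1-\sqrt{2},\sqrt{2}-1)$ computed in the proof of Theorem~\ref{th: tong even q4}.

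First I would prove $\Theta_{n-1}\leq\Theta_n$. Using \eqref{eq: formulae for theta}, and noting that on $\mathcal{D}_{1,k}$ we have $t_n<0$ so $\varepsilon_{n+1}=-1$, the inequality $\Theta_{n-1}\leq\Theta_n$ becomes $v_n\leq -t_n=|t_n|$ after clearing the common positive denominator $1+t_nv_n$. Since $(t_n,v_n)\in\mathcal{D}_1$ lies below the graph of $f$ and to the right of $t=l_0$, and since $v_n\leq H_1$ while $|t_n|\geq\delta_1$, this reduces to a direct comparison $v_n\leq|t_n|$ that I would check from the bounds $|t_n|\geq\delta_1$ and $v_n\leq H_1=\sqrt 2-1$, together with the fact that on $\mathcal{D}_1$ the point sits between the line $v=H_1$ and the curve $v=f(t)$. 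The geometry here is exactly that $\mathcal{D}_1$ lies in the region where $g(t_n)<v_n$ fails to force $|t_n|<v_n$; concretely, the repellent/attractive fixed point structure already established shows points move ``left and up,'' keeping $|t_n|$ dominant.

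Next I would prove $\Theta_n\leq\Theta_{n+1}$. Here the natural tool is the relation $\Theta_m(t_{n+1},v_{n+1})=\Theta_{m+1}(t_n,v_n)$ noted after \eqref{eq: formulae for theta}: applying $\mathcal{T}_\alpha$ once sends $\mathcal{D}_{1,k}$ into $\mathcal{D}_{1,k-1}$, and on $\mathcal{D}_{1,k-1}$ (for $k-1\geq1$) the already-proven first inequality gives $\Theta_{n-1}(t_{n+1},v_{n+1})\leq\Theta_n(t_{n+1},v_{n+1})$, which is precisely $\Theta_n\leq\Theta_{n+1}$. The boundary case $k=1$, where $\mathcal{T}_\alpha(t_n,v_n)\in\mathcal{D}_{1,0}$ and the future point has $t_{n+1}<-\delta_1$, is handled separately: there one image coordinate is pushed into the region $t\geq0$ after one more step, and I would instead use \eqref{eq: formula Theta_n+1} directly, substituting $d_{n+1}$ and the signs $\varepsilon_{n+1}=-1$ forced by $t_n<0$, to obtain a polynomial inequality in $t_n,v_n$ that factors favourably given $t_n\in[\tau_0,\tau_1)=[-\delta_1,\tau_1)$ and $v_n\leq H_1$.

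The main obstacle I anticipate is the $\Theta_n\leq\Theta_{n+1}$ step at the seam between components, i.e.\ controlling $\Theta_{n+1}$ via \eqref{eq: formula Theta_n+1} when the orbit crosses from $t_n<0$ to $t_{n+1}\geq0$, because the sign of $\varepsilon_{n+2}$ and the exact value of the digit $d_{n+1}=1$ must be extracted from the position $t_n\in\mathcal{D}_{1,k}$ rather than assumed. Once the digits and signs are fixed, the remaining work is the verification that the resulting rational function of $(t_n,v_n)$ is monotone/nonnegative on the explicitly bounded region $\tau_{k-1}\leq t_n<\tau_k$, $0\leq v_n\leq f(t_n)$, which is a routine but careful estimate leveraging $v_n\leq H_1=\sqrt2-1$ and the recursive definition of $\tau_k$.
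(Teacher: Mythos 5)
Your overall strategy is sound, and your reduction of the second inequality to the first via the shift $\Theta_m(t_{n+1},v_{n+1})=\Theta_{m+1}(t_n,v_n)$ together with $\mathcal{T}_\alpha(\mathcal{D}_{1,k})\subset\mathcal{D}_{1,k-1}$ is a genuinely different, and arguably cleaner, route than the paper's: the paper instead computes $\Theta_{n+1}$ explicitly from~(\ref{eq: formula Theta_n+1}) with $d_{n+1}=2$ and $\varepsilon_{n+1}=\varepsilon_{n+2}=-1$, reduces $\Theta_n\le\Theta_{n+1}$ to $v_n\le\frac{7t_n+2\sqrt{2}}{1+2\sqrt{2}\,t_n}$, and checks that the right-hand side is decreasing in $t$ with value $\sqrt{2}-1$ at the fixed point. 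Your shift argument avoids that computation entirely, and it even covers $k=1$ without a separate treatment, because $\mathcal{D}_{1,0}$ still lies in $t<0$ with $v\le H_1$, so the same comparison $v_{n+1}\le -t_{n+1}$ applies there.

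However, two of your supporting claims are wrong and need correcting. First, on $\mathcal{D}_{1,k}$ with $k\ge1$ one has $\tau_{k-1}\le t_n<\tau_k$ with $-\delta_1=\tau_0\le\tau_{k-1}$, so $|t_n|\le\delta_1$, not $|t_n|\ge\delta_1$. The inequality $v_n\le|t_n|$ follows instead from $t_n<1-\sqrt{2}$ (all the $\tau_j$ lie in $[-\delta_1,1-\sqrt{2})$, and the graph of $f$ meets $v=H_1$ precisely at $t=1-\sqrt{2}$), which gives $|t_n|>\sqrt{2}-1=H_1\ge v_n$; this is exactly what the paper means by ``true in view of the fixed point.'' Second, on $\mathcal{D}_{1,k}$ one has $t_n\in[-\delta_1,-\delta_2)$, hence $d_{n+1}=2$, not $1$; if you do fall back on~(\ref{eq: formula Theta_n+1}) you must substitute $d_{n+1}=2$, $\varepsilon_{n+1}=\varepsilon_{n+2}=-1$. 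Relatedly, the ``seam'' you worry about (the orbit crossing into $t\ge0$) never arises here: for $k\ge1$ the image point lies in $\mathcal{D}_{1,k-1}$, so $t_{n+1}<0$ and $\varepsilon_{n+2}=-1$ automatically; the crossing into $t\ge0$ happens only one step later, from $\mathcal{D}_{1,0}$, and does not enter the comparison of $\Theta_n$ with $\Theta_{n+1}$.
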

\begin{proof}
On $\mathcal{ D}_{1,k}$ we have $d_{n+1}=2$ and $\varepsilon_{n+1}= \varepsilon_{n+2}=-1$. From~(\ref{eq: formulae for theta}) we find
$$
\Theta_{n-1} \leq \Theta_{n} \quad \textrm{if and only if} \quad  v_n \leq -t_n, 
$$
and the latter inequality is true in view of the fixed point. From~(\ref{eq: formula Theta_n+1}) we have $\Theta_{n+1} = \frac{-(1+2\sqrt{2}t_n)(2\sqrt{2}-v_n)}{1+t_nv_n}$ and we find
$$
\Theta_{n} \leq \Theta_{n+1}  \quad \textrm{if and only if} \quad  v_n \leq \frac{7t_n +2\sqrt{2}}{1+2\sqrt{2}t_n}.
$$
On $\mathcal{ D}_{1,k}$ the function $v(t) = \frac{7t +2\sqrt{2}}{1+2\sqrt{2}t}$ is decreasing in $t$ and $v(1-\sqrt{2})=\sqrt{2}-1$. So $\Theta_{n} \leq \Theta_{n+1}$ on $\mathcal{ D}_{1,k}$ for $k\geq 1$. 
\end{proof}

We conclude that for every point $(t_n,v_n) \in \mathcal{ D}_{1,k}$ for $k \geq 1$
$$
\min \{ \Theta_{n-1},\Theta_n,\Theta_{n+1} \} \leq \max_{(t,v) \in \mathcal{ D}_{1,k} } \Theta_{n-1} (t,v).
$$

We determine the maximum of $\Theta_{n-1}$ on $\mathcal{ D}_{1,k}$. The partial derivatives of $\Theta_{n-1}$ are given by 
\[
\frac{\partial \Theta_{n-1}}{\partial t_n} = \frac{-v_n^2}{(1+t_nv_n)^2} < 0 \quad \textrm{ and } \quad \frac{\partial \Theta_{n-1}}{\partial v_n} = \frac{1}{(1+t_nv_n)^2}>0,
\]
so on $\mathcal{ D}_{1,1}$ we see that $\Theta_{n-1}$ attains its maximum in $(-\delta_1,\sqrt{2}-1)$. Similarly on $\mathcal{ D}_{1,k}$ we find that $\Theta_{n-1}$ attains its maximum in $(\tau_k,\sqrt{2}-1)$, which is the top left-hand vertex of $\mathcal{ D}_{1,k}$. We find 
$$
c_1=\frac{\sqrt{2}-1}{1-\delta_1(\sqrt{2}-1)}  \quad \textrm{and} \quad c_k=\frac{\sqrt{2}-1}{1+\tau_{k-1}(\sqrt{2}-1)}.
$$

One sees that if $(t_n,v_n) \in \mathcal{ D}_{1,k}$ for some $k\geq 3$, then $(t_{n+1},v_{n+1}) \in \mathcal{ D}_{1,k-1}$, $(t_{n+2},v_{n+2}) \in \mathcal{ D}_{1,k-2}, \ldots,(t_{n+k-1},v_{n+k-1}) \in \mathcal{ D}_{1,1} $.  It follows that
\begin{eqnarray}
\nonumber
(t_n,v_n)\in {\mathcal D}_{1,k} &  \textrm{implies}  & \frac{1}{2} < \min
\{ \Theta_{n-1},\dots ,\Theta_{n+k}\} <
\Theta_{n-1}(\tau_{k-1},\sqrt{2}-1)=c_k \\
& & \mbox{ and } \Theta_{n+k+1}<\frac{1}{2}.\label{implication}
\end{eqnarray}

For $\alpha <\alpha_0$ the above implication~(\ref{implication}) is actually an equivalence, since ${\mathcal D}_2$ is void for these values of $\alpha$. Thus Theorem~\ref{th: tong even q4} for the case $q=4$ and $\alpha <\alpha_0$ follows with $K=1$.

We continue by studying the orbit of points in $\mathcal{D}_2$ and assume that $\alpha \in \left[\alpha_0,\frac{1}{\lambda}\right)$, so $\mathcal{D}_2$ is non-empty. 

It follows from the fact that $(1-\sqrt{2}, \sqrt{2}-1)$ is a repellent fixed-point on the first coordinate map of ${\mathcal T}_{\alpha}$, and an attractive fixed-point on the second coordinate map of ${\mathcal T}_{\alpha}$,
that for $(t,v)\in {\mathcal D}_2$
$$
1-\sqrt{2}<r_1<t<\pi_1({\mathcal T}_{\alpha}(t,v))\quad
{\mbox{and}}\quad H_2>v>\pi_2({\mathcal T}_{\alpha}(t,v)) >
\sqrt{2}-1,
$$
(here $\pi_i$ is the projection on the $i$th coordinate), i.e., ${\mathcal T}_{\alpha}$ ``moves'' the point $(t,v)\in {\mathcal
D}_2$ to the right, and ``downwards towards'' $\sqrt{2}-1$. 

Let $(t_1,\sqrt{2}-1+v_1)$ be a point in $\mathcal{ D}_2$. The lowest point in $\overline{\mathcal{ D}_2}$, the closure of $\mathcal{ D}_2$, is given by $(r_1,g(r_1))=\left(\frac{1-2\alpha}{\sqrt{2}\alpha},\frac{(\sqrt{2}-4)\alpha+2}{2\alpha-1}\right)$, so $0< \frac{-(2+\sqrt{2})\alpha + 1+\sqrt{2}}{2\alpha-1} < v_1 $ for every point $(t_1,\sqrt{2}-1+v_1) \in \mathcal{ D}_2$ and trivially $v_1<1$.

For the second coordinate we find
$$
\pi_2(\mathcal{T}_\alpha(t_1,\sqrt{2}-1+v_1)) = \frac{1}{\sqrt{2}+1-v_1} = \sqrt{2}-1 + \frac{\sqrt{2}-1}{\sqrt{2}+1-v_1} v_1.
$$

For all points in $\mathcal{ D}_2$ we have $d = 2$ and $\varepsilon=-1$. Thus in every consecutive step the second coordinate will be a factor $\frac{\sqrt{2}-1}{\sqrt{2}+1-v_1} < \frac{\sqrt{2}-1}{\sqrt{2}}<1$ closer to the value $\sqrt{2}-1$. Hence there exists a smallest positive integer $K$ such that for all $(t,v) \in\mathcal{ D}_2$ one has ${\mathcal T}_{\alpha}^{K}(t,v)\not\in {\mathcal D}_2$. In words: the region ${\mathcal D}_2$ is ``flushed'' out of $\mathcal{ D}$ in $K$ steps, and the implication in~(\ref{implication}) is an equivalence for $k>K$. This proves Theorem~\ref{th: tong even q4}.
\hfill $\Box$\medskip\

\begin{Rmk} More can be said with (considerable) effort. We start by deriving $\alpha_1$ such that for $\alpha\in
[\alpha_0,\alpha_1]$ the region $\mathcal{D}_2$ is non-empty, but flushed after one iteration of ${\mathcal T}_{\alpha}$.
We find $\alpha_1$ by solving for which value of $\alpha$ we have that the point $\mathcal{T}_\alpha(r_1,H_2)=\left(r_2,\frac{\sqrt{2}}{3}\right)$is on the graph of $g$. Using that $d(r_1) = 2$ for $\alpha \in \left[\alpha_0,\frac{1}{\lambda}\right)$ we find that the only solution is given by $\alpha_1:= \frac{24+\sqrt{2}}{36}=0.70595\dots$.

So if $\alpha \in [\alpha_0,\alpha_1] = \left[ \frac{4+\sqrt{2}}{8}, \frac{24+\sqrt{2}}{36}\right] $ and $(t_n,v_n)\in {\mathcal D}_2$, then $\min \{ \Theta_{n-1},\Theta_n\} >\frac{1}{2}$, and $\Theta_{n+1}<\frac{1}{2}$.

For $\alpha > \alpha_1$ and $i\geq 1$, we define the pre-images $g_i$ of $g$ for $t\in [1-\sqrt{2},\beta ]$ by
$$
v=g_i(t)\quad \Leftrightarrow \quad \pi_2({\mathcal
T}_{\alpha}^i(t,v)) = g\big( \pi_1({\mathcal T}_{\alpha}^i(t,v))
\big) ,
$$
i.e., the point $(t,v)$ is on the graph of $g_i$ if and only if ${\mathcal T}_{\alpha}^i(t,v)$ is on the graph of $g$. Note that for every $i\geq 1$ one has that $(1-\sqrt{2},\sqrt{2}-1)$ is on the graph of $g_i$.

By definition of $K_{\alpha}$ it follows that the graph of $g_i$ has
a non-empty intersection with ${\mathcal D}_2$ if and only if
$i=1,\dots,K_{\alpha}-1$. These $K_{\alpha}-1$ graphs $g_i$ divide
${\mathcal D}_2$ like a ``cookie-cutter'' into regions  ${\mathcal
D}_{2,i}$ for $i=1,\dots,K_{\alpha}$; setting $g_0:=g$,
$$
{\mathcal D}_{2,i}:=\{ (t,v)\in {\mathcal D}_2|\, g_{i-1}(t)\leq v<
\min \{ H_2,g_i(t)\} \} .
$$
We have that
$$
(t_n,v_n)\in {\mathcal D}_{2,i} \quad \Rightarrow \quad \min \{
\Theta_{n-1},\dots,\Theta_{n+i-1} \} >\frac{1}{2} {\mbox{ and }}
\Theta_{n+i}<\frac{1}{2}.
$$
In principle it is possible for $(t_n,v_n)\in {\mathcal D}_2$ and $k=1,\dots,K-1$ to determine the optimal constant ${\tilde c}_k$ such that
$$
\min \{ \Theta_{n-1},\dots,\Theta_{n+k} \} < {\tilde c}_k .
$$
In this way, Theorem~\ref{th: tong even general} can be further
sharpened.

\end{Rmk}

\begin{Rmk}
From the proof of Lemma~\ref{eq: theta order q4} it easily follows that for points $(t_n,v_n) \in \mathcal{ D}_2$
$$
\min \{\Theta_{n-1},\Theta_{n},\Theta_{n+1}\} = \Theta_{n}.
$$
The partial derivatives of $\Theta_n$ on $\mathcal{ D}_2$ are given by 
\[
\frac{\partial \Theta_n}{\partial t_n} = \frac{-1}{(1+t_nv_n)^2}<0 \quad \textrm{ and } \quad \frac{\partial \Theta_n}{\partial v_n} = \frac{t_n^2}{(1+t_nv_n)^2}>0.
\]

\end{Rmk}

\begin{Ex}
An easy but tedious calculation yields that ${\mathcal T}_{\alpha}^2(r_1,H_2)$ is on the graph of $g$ if $\alpha
= \alpha_2:=\frac{140+\sqrt{2}}{200}=0.707071\dots$. 

For $\alpha \in (\alpha_1,\alpha_2]$ we have that $K_{\alpha}=2$. For $\alpha \in (\alpha_1,\alpha_2]$ the region ${\mathcal D}_2$ consists of two parts: ${\mathcal D}_{2,1}$ and ${\mathcal D}_{2,2}$. The region ${\mathcal D}_{2,1}$ is immediately flushed and is therefore not interesting for us. For $(t_n,v_n)\in {\mathcal D}_{2,2}$ we have that
$$
\frac{1}{2} < \min \{ \Theta_{n-1},\Theta_n,\Theta_{n+1}\} < \Theta_n \left( {\mathcal T}_{\alpha}(r_1,H_2)\right) = 9\sqrt{2}\alpha -6\sqrt{2}.
$$
The comparable value of $c_1$ is
$$
c_1=\frac{\delta_1}{1-\delta_1H_1}=\frac{(\alpha+1)(2-\sqrt{2})}{\sqrt{2}\alpha+2\sqrt{2} - 1}.
$$
We find that $ 9\sqrt{2}\alpha-6\sqrt{2}> \frac{(\alpha+1)(2-\sqrt{2})}{\sqrt{2}\alpha+2\sqrt{2} - 1}$ when
$ \alpha > \frac{12-9\sqrt{2}+\sqrt{378+216\sqrt{2}}}{36}=0.6944$. So we find for  $\alpha \in (\alpha_1,\alpha_2]$ that
$$
\min \{ \Theta_{n-1},\Theta_n,\Theta_{n+1}\} < 9\sqrt{2}\alpha -6\sqrt{2}.
$$

\end{Ex}


\subsubsection{Even case with \texorpdfstring{$\alpha \in  (\frac12, \frac{1}{\lambda})$}{alpha in the interval} and  \texorpdfstring{$q \geq 6 $}{q > 4}}
\label{sec: even interval}
From now on we assume $q \geq 6$. The shape of the region $\mathcal{D} \subset \Omega_\alpha$, where $\min\{\Theta_{n-1},\Theta_{n}\}>\frac12$  as defined in~(\ref{def:D}) is given in the next lemma; also see Figure~\ref{im: d in example q 6}.

\begin{Lemma}
\label{th: shape D}
For $\alpha \in (\frac12 ,\frac{\lambda^2+4\lambda-4}{2\lambda^3}]$ the region  $\mathcal{D}$ consists of two  components $\mathcal{D}_1$ and $\mathcal{D}_2$.  The subregion
$\mathcal{D}_1$ is bounded by the lines $t = l_0, v = H_1$ and the graph of $f$;  $\mathcal{D}_2$ is bounded by the graph of $g$ from the right, by the graph of $f$ from below and by the boundary of $\Omega_\alpha$. 

If $\alpha \in \left(\frac{\lambda^2+4\lambda-4}{2\lambda^3}, \frac{-\lambda^2+4\lambda+4}{8\lambda}\right]$, then $\mathcal{ D}_2$ splits into two parts and $\mathcal{ D}$ consists of three  components.

If  $\alpha \in \left( \frac{-\lambda^2+4\lambda+4}{8\lambda},\frac{1}{\lambda}\right)$, then $\mathcal{ D}$ consists of four  components: $\mathcal{ D}_1$, the two parts of $\mathcal{ D}_2$ and an additional part $\mathcal{ D}_3$, bounded by the line $t=r_{p-1}$, the graph of $g$ and the line $v=H_{2p-2}$; also see Figure~\ref{im: danger zones even}.
\end{Lemma}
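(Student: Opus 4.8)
Throughout $\mathcal{H}_q=\tfrac12$, so $f(t)=\frac{1}{2-t}$ and $g(t)=\frac{2|t|-1}{t}$ by~(\ref{eq: def f and g}); note that these two functions do not depend on $\lambda$, only the staircase $\Omega_\alpha$ does. The plan is to treat $\mathcal{D}$ statically, as the set of points of $\Omega_\alpha$ lying strictly above a lower envelope built from $f$ and $g$ and below the staircase top $v=H_n$, and then to count connected components by following this envelope across the corners of the staircase. First I would restrict to $t<0$: for $0<t<r_0$ one computes $f(t)-g(t)=\frac{2(1-t)^2}{t(2-t)}>0$ (as $r_0=\alpha\lambda<1$), so by~(\ref{eq: relations f g theta}) the two requirements $\Theta_{n-1}>\tfrac12$ and $\Theta_n>\tfrac12$ cannot hold together on $\Omega_\alpha^+$; hence $\mathcal{D}\subseteq\{t<0\}$. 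For $t<0$ the condition $\Theta_{n-1}>\tfrac12$ reads $v>f(t)$, and $\Theta_n>\tfrac12$ reads $v>g(t)$ on $-\tfrac12<t<0$ while holding automatically on $t\le-\tfrac12$. Thus $\mathcal{D}=\{(t,v)\in\Omega_\alpha:\ t<0,\ v>\psi(t)\}$, where $\psi$ is the increasing envelope equal to $f$ on $[l_0,1-\sqrt2)$ and to $g$ on $(1-\sqrt2,0)$, the graphs of $f$ and $g$ meeting at $(1-\sqrt2,\sqrt2-1)$.

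I would then isolate $\mathcal{D}_1$ and show the rest is connected until the envelope turns into $g$. Since $f(1-\lambda)=H_1$, and $\alpha<1/\lambda$ gives $l_0<1-\lambda$ while a direct check gives $1-\lambda<r_1$, the graph of $f$ meets $v=H_1$ inside $J_1=[l_0,r_1)$ and moreover $f(r_1)>H_1$; the former yields the curvilinear triangle $\mathcal{D}_1$ bounded by $t=l_0$, $v=H_1$ and the graph of $f$, while the latter separates $\mathcal{D}_1$ from the rest. At every later breakpoint $r_n$ ($n\ge2$) or $l_n$ ($n\ge1$) lying left of $1-\sqrt2$, one has $\psi=f(t)<\tfrac12<\tfrac1\lambda=H_2$, and the left of the two heights meeting there is at least $H_2$; hence $\mathcal{D}$ stays connected across each such jump, producing a single component $\mathcal{D}_2$ up to the point where $\psi$ becomes $g$ and starts to climb steeply towards $+\infty$ as $t\to0^-$.

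The two structural transitions are then located at the last two breakpoints. At $t=l_{p-2}$ the staircase rises from $H_{2p-4}=(\lambda^2-2)/\lambda$ (obtained from $H_{2p-4}=\lambda-1/H_{2p-2}$ and $H_{2p-2}=\lambda/2$) to $H_{2p-3}=\lambda-1$, and connectivity there is equivalent to $\psi(l_{p-2})<H_{2p-4}$. In the relevant range $l_{p-2}>1-\sqrt2$, so $\psi(l_{p-2})=g(l_{p-2})$, and solving $g(l_{p-2})=H_{2p-4}$ with the expression for $l_{p-2}$ from~(\ref{eq: lp1 rp1 middle even}) gives the first threshold $\alpha=\frac{\lambda^2+4\lambda-4}{2\lambda^3}$; beyond it $\mathcal{D}_2$ breaks into a piece over $J_2\cup\dots\cup J_{2p-4}$ and a piece over $J_{2p-3}$. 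Likewise, over the negative part $[r_{p-1},0)$ of $J_{2p-2}$ the height is $H_{2p-2}=\lambda/2$, and the region above $g$ is non-empty precisely when $g(r_{p-1})<\lambda/2$; solving $g(r_{p-1})=\lambda/2$ with the expression for $r_{p-1}$ from~(\ref{eq: lp1 rp1 middle even}) gives the second threshold $\alpha=\frac{-\lambda^2+4\lambda+4}{8\lambda}$ and the component $\mathcal{D}_3$ bounded by $t=r_{p-1}$, the graph of $g$ and $v=H_{2p-2}$. Finally I would verify $\frac{\lambda^2+4\lambda-4}{2\lambda^3}<\frac{-\lambda^2+4\lambda+4}{8\lambda}<\frac1\lambda$, which sorts the three cases as stated.

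The step I expect to be the main obstacle is the uniform claim, for all even $q\ge6$ and all $\alpha\in(\tfrac12,\tfrac1\lambda)$, that the only breakpoints falling in the region $(1-\sqrt2,0)$ where $\psi=g$ are $l_{p-2}$ and $r_{p-1}$. This is what prevents additional splittings and keeps the component count at most four: every earlier breakpoint $r_{p-2},l_{p-3},\dots$ must be shown to lie left of $1-\sqrt2$, where the slow rise of $f$ keeps $\mathcal{D}_2$ connected. Establishing this needs the positions of $l_n,r_n$ from Theorem~\ref{th: position l r even} together with the height identities coming from~(\ref{eq: B_n even relations}), and it is where the case analysis is heaviest.
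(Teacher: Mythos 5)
Your proposal follows essentially the same route as the paper: restrict to $t<0$, track where the graphs of $f$ and $g$ meet the horizontal segments $v=H_i$ of $\Omega_\alpha$, and read off the two thresholds from the condition $g(l_{p-2})\gtrless H_{2p-4}=\lambda-\frac{2}{\lambda}$ and from whether the intersection point $\left(\frac{-2}{\lambda+4},\frac{\lambda}{2}\right)$ of $g$ with $v=H_{2p-2}$ lies to the right of $r_{p-1}$. The step you defer as the ``main obstacle'' is closed in the paper by a single observation: $g(r_{p-2})=-2-\frac{1}{r_{p-2}}=-2+\lambda+r_{p-1}<0$ (using $r_{p-2}=\frac{-1}{\lambda+r_{p-1}}$, $\lambda<2$ and $r_{p-1}<0$), so $g$ is negative at and to the left of $r_{p-2}$ and, being increasing, cannot cross any of the lines $v=H_i$ for $i\leq 2p-5$, which rules out any further splittings.
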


\begin{figure}[!ht]
\includegraphics[height=7cm]{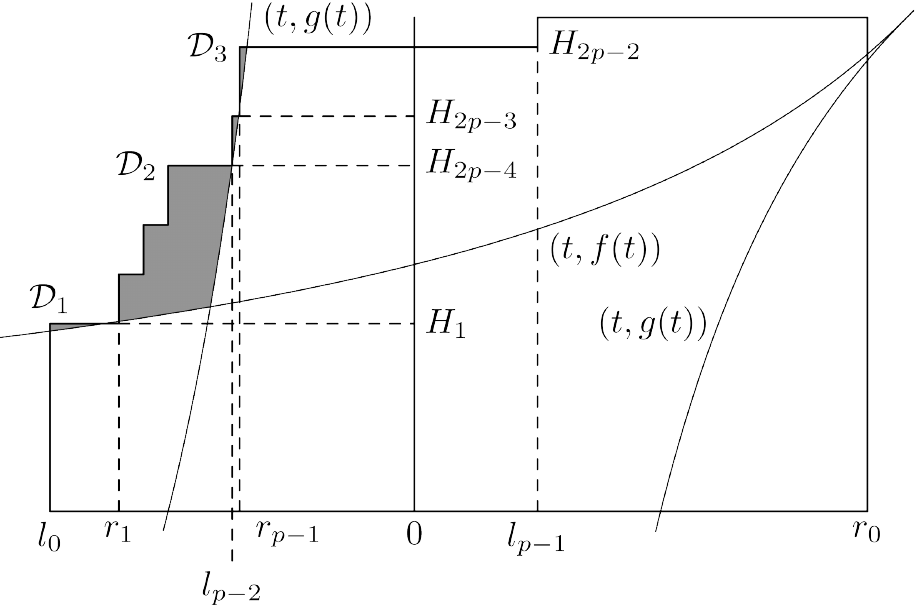}
\caption{Sketch of $\mathcal{D}$ in $\Omega_\alpha$. The number of steps on the left boundary of $\mathcal{ D}_2$ is about $p-4$. In this figure we took $\alpha \in ( \frac{-\lambda^2+4\lambda+4}{8\lambda},\frac{1}{\lambda})$, so $\mathcal{D}_2$ is split into two components and there is a region $\mathcal{D}_3$.} 
\label{im: d in example q 6}
\end{figure}
\begin{proof}
Recall that $\mathcal{H}_q=\frac12$. First assume $t \geq 0$. As in the case $q=4$, the graphs do not intersect for $t \leq r_0$. Thus every point $(t_n,v_n) \in \Omega_\alpha^+$ is below the graph of $f(t)$ or above the graph of $g(t)$. Again by~(\ref{eq: relations f g theta}), $\min \{\Theta_{n-1},\Theta_n\} <\frac12$.

Assume $t<0$. The graph of $f(t)$ intersects the line $v=H_1$ in the point $(-H_{2p-3},H_1)$ and we find that $l_0 < -H_{2p-3} < r_1$ if $\alpha < \frac{1}{\lambda}$. Since the function $f(t)$ is strictly increasing and  $f(0)=\mathcal{H}_q < \frac{1}{\lambda} = H_2$ it follows that the graph of $f(t)$ does not intersect any of the line segments $v=H_i$ for $i=2,\dots,2p-2$ for $t<0$.

Next we consider the intersection points of the graph of $g(t)$ with the line segments $v=H_i$ for $i=1,\dots,2p-2$. We work from right to left. The intersection point of the graph of $g$ and the line $v= H_{2p-2} = \frac{\lambda}{2}$ is given by $\left(\frac{-2}{\lambda+4},\frac{\lambda}{2}\right)$. The first coordinate of this point is larger than $r_{p-1}$ if and only if $\alpha>\frac{-\lambda^2+4\lambda+4}{8\lambda}$ and always smaller than $l_{p-1}$, since $l_{p-1}>0$ . We conclude that $\frac{-2}{\lambda+4}$ is in the interval $J_{2p-2} $ if and only if $\alpha \in \left(\frac{-\lambda^2+4\lambda+4}{8\lambda},\frac{1}{\lambda}\right)$. 

The intersection point of the graph of $g(t)$ with the line $v=H_{2p-3}=\lambda-1$ is given by  $(-H_1,H_{2p-3})$. Since $-\delta_1<-H_1<r_{p-1}$ we have by Theorem~\ref{th: position l r even} that $-H_1 \in J_{2p-3}$ for all $\alpha \in \left(\frac12,\frac{1}{\lambda}\right)$. 

Furthermore $g(l_{p-2})= -2 -\frac{(-\alpha\lambda^2+2\alpha+1)\lambda}{\alpha\lambda^2-2}$ and we find that $g(l_{p-2}) > H_{2p-4} = \lambda-\frac{2}{\lambda}$ if and only if  $\alpha > \frac{\lambda^2+4\lambda-4}{2\lambda^3}$. So if $\alpha \in \left( \frac{\lambda^2+4\lambda-4}{2\lambda^3},\frac{1}{\lambda}\right)$ then $\mathcal{ D}_2$ consists of two separated parts.

The graph of $g(t)$ does not intersect any of the other lines $v=H_i$ with $i=1,\dots,2p-5$, since $g$ is strictly increasing and $g(r_{p-2}) = -2 - \frac{1}{r_{p-2}} = -2 + \lambda +r_{p-1} <0$, where we used that $r_{p-2}=\frac{-1}{\lambda+r_{p-1}}$, $\lambda<2$ and $r_{p-1}<0$. \end{proof}
We see that $\mathcal{ D}$ stretches over several intervals $J_n$, which was not the case for $q=4$. Points $(t_n,v_n)$ in $\mathcal{ D}_1$ have \mbox{$t_n \in J_1 = [l_0,r_1)$}, points in $\mathcal{ D}_2$ have $t_n \in J_2 \cup \dots \cup J_{2p-1}=  [r_1,r_{p-1})$ and points in $\mathcal{ D}_3$ have $t_n \in J_{2p-2}= [r_{p-1},l_{p-1}]$.

On $\mathcal{D}$ we consider  $\Theta_{n+1}$, the next approximation coefficient. We wish to express $\Theta_{n+1}$  locally as a function of only $t_n$ and $v_n$. We divide $\mathcal{D}$ into subregions where $d_{n+1},\varepsilon_{n+1}$ and $\varepsilon_{n+2}$ are constant. This gives three regions; see Table~\ref{ConstantsEven} for the definition of the subregions.

\begin{table}[!ht]
\begin{displaymath}
\begin{array}{lrccll|rrr}
\multicolumn{6}{c|}{\textrm{Region}} & \quad d_{n+1} &\quad  \varepsilon_{n+1}
&\quad  \varepsilon_{n+2} \\
\hline
&&&&&&&&\\
\textrm{(I)} & \bigg\{ (t_n,v_n) \in \mathcal{ D}\;| \; l_0 &\leq & t_n & <&\frac{-1}{\lambda}\bigg\} & 1 & -1 & -1\\
 &&&&&&&&\\
 \textrm{(II)}& \bigg\{ (t_n,v_n) \in \mathcal{ D}\;| \; \frac{-1}{\lambda} &\leq & t_n &<&-\delta_1\bigg\}& 1 & -1 & 1\\
&&&&&&&&\\
\textrm{(III)}& \bigg\{ (t_n,v_n) \in \mathcal{ D}\;| \; -\delta_1 &\leq & t_n & <& \frac{-1}{2\lambda} \bigg\}& 2 & -1 & -1
\end{array}
\end{displaymath}
\caption{Subregions of $\mathcal D$ giving constant coefficients. }
\label{ConstantsEven}
\end{table}

We analyse $\Theta_{n+1}$ on the three regions. 

\subsection*{Region(I)}
On Region (I) we have that $\Theta_{n+1}< \frac12$ if and only if $v_n> \frac{2\lambda t_n + 2 \lambda +1}{2 \lambda t_n -t_n +2}$. 

\subsection*{Region(II)}
Region (II) is mapped to $\Omega_\alpha^+$ under $\mathcal{T}_\alpha$, so on Region (II) \mbox{$\Theta_{n+1}<\frac12$} for all points $(t_n,v_n)$.



\subsection*{Region(III)}
We denote the intersection of $\mathcal{D}_2$ and Region (III) by $\mathcal{A}$. The vertices of $\mathcal{A}$ are given by $\left( -\delta_1,g(-\delta_1)\right), \, \left(\frac{-1}{\lambda +1},\lambda-1\right)  \textrm{ and }  (-\delta_1,\lambda-1).$ The vertices of $\mathcal{D}_3$ are given by $\left( r_{p-1},g(r_{p-1})\right), \, \left(\frac{-2}{\lambda+4},\frac{\lambda}{2}\right)  \textrm{ and } (r_{p-1},\frac{\lambda}{2}).$ See Figure~\ref{im: danger zones even}.

We focus on Region (III) and discuss points in Region (I) later. 

\begin{figure}[!ht]
\includegraphics[height=50mm]{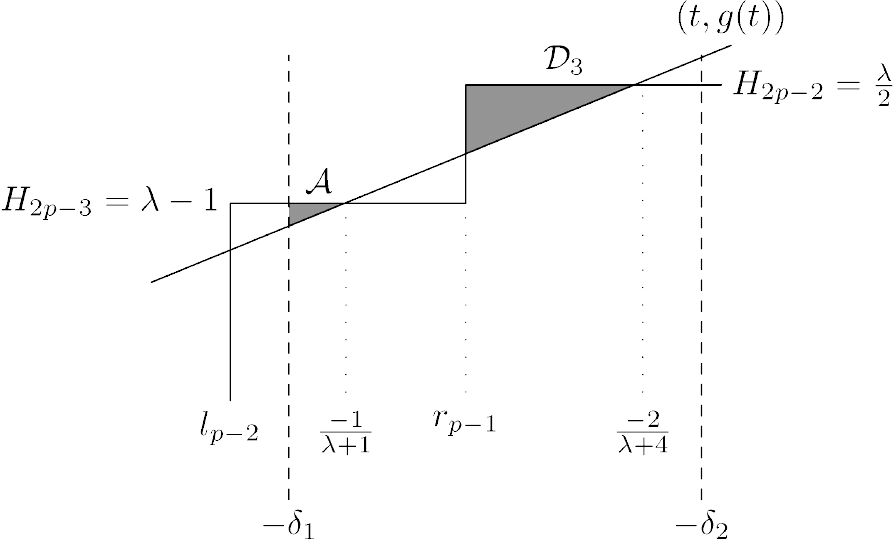}
\caption{Region (III) in $\Omega_\alpha$. If $\alpha<\frac{-\lambda^2+4\lambda+4}{8\lambda}$ there is no region $\mathcal{D}_3$.}
\label{im: danger zones even}
\end{figure}

We want to determine bounds for the minimum of three consecutive approximation coefficients on $\mathcal{A}$ and $\mathcal{D}$. 
\begin{Lemma}
\label{lem: theta's on region (III)}
For each point $(t_n,v_n)$ in Region  {\rm(III)} 
$$
\min \{\Theta_{n-1},\Theta_{n},\Theta_{n+1}\} = \Theta_{n}.
$$ 
\end{Lemma}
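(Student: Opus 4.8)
The plan is to establish the two inequalities $\Theta_n\le\Theta_{n-1}$ and $\Theta_n\le\Theta_{n+1}$ separately, reducing each to an explicit comparison between $t_n$ and $v_n$ and then verifying it from the shape of $\mathcal{D}$ in Region~(III). Throughout I use three facts: every point of Region~(III) lies in $\mathcal{D}$, so by~(\ref{def:D}) both $\Theta_{n-1}>\tfrac12$ and $\Theta_n>\tfrac12$; the bound $t_n\in[-\delta_1,-\tfrac{1}{2\lambda})$ together with $v_n$ small forces $1+t_nv_n>0$; and by Table~\ref{ConstantsEven} one has $d_{n+1}=2$ and $\varepsilon_{n+1}=\varepsilon_{n+2}=-1$ on this region.

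First I would treat $\Theta_n\le\Theta_{n-1}$. By~(\ref{eq: formulae for theta}) we have $\Theta_{n-1}=\frac{v_n}{1+t_nv_n}$ and $\Theta_n=\frac{-t_n}{1+t_nv_n}$, so, since $1+t_nv_n>0$, the inequality is equivalent to $v_n\ge -t_n=|t_n|$. As $\Theta_n>\tfrac12$ and $t_n<0$, the characterization~(\ref{eq: relations f g theta}) gives $v_n>g(t_n)$, where $g(t)=-2-\tfrac1t$. A direct computation shows $g(t)+t\ge 0$ exactly when $t^2-2t-1\le 0$, i.e.\ for $t\in[1-\sqrt2,0)$; and on Region~(III) we have $t_n\ge-\delta_1>1-\sqrt2$ once $q\ge 6$, since then $\delta_1=\frac{1}{(\alpha+1)\lambda}\le\frac{2}{3\sqrt3}<\sqrt2-1$. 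Hence $v_n>g(t_n)\ge -t_n$, which gives $\Theta_n<\Theta_{n-1}$.

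Next I would treat $\Theta_n\le\Theta_{n+1}$. Substituting the constants of Table~\ref{ConstantsEven} into~(\ref{eq: formula Theta_n+1}) yields
\[
\Theta_{n+1}=\frac{-(1+2\lambda t_n)(2\lambda-v_n)}{1+t_nv_n}.
\]
Because $t_n<-\tfrac{1}{2\lambda}$ we have $1+2\lambda t_n<0$, so clearing the positive denominator and dividing by this negative factor turns $\Theta_n\le\Theta_{n+1}$ into $v_n\le w(t_n)$ with $w(t)=\frac{2\lambda+(4\lambda^2-1)t}{1+2\lambda t}$. A short computation gives $w'(t)=\frac{-1}{(1+2\lambda t)^2}<0$, so $w$ is strictly decreasing. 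It therefore suffices to check the inequality at the rightmost abscissa of each component of $\mathcal{D}$ inside Region~(III), comparing $w$ there with the top height of that component. These components are $\mathcal{A}$, the part of $\mathcal{D}_2$ in Region~(III), whose points satisfy $v_n\le H_{2p-3}=\lambda-1$ and whose rightmost abscissa is $t=-H_1=-\tfrac{1}{\lambda+1}$; and, when $\alpha>\frac{-\lambda^2+4\lambda+4}{8\lambda}$, the region $\mathcal{D}_3$, whose points satisfy $v_n\le H_{2p-2}=\tfrac\lambda2$ with rightmost abscissa $t=\frac{-2}{\lambda+4}$. Evaluating $w$, the inequalities $w(-H_1)\ge\lambda-1$ and $w\!\left(\tfrac{-2}{\lambda+4}\right)\ge\tfrac\lambda2$ reduce respectively to $\lambda^2\ge 2$ and $9\lambda^2-12\lambda-4\ge 0$, both valid for $q\ge 6$ since then $\lambda\ge\sqrt3$. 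Using that $w$ is decreasing, for any $(t_n,v_n)$ in these components $w(t_n)\ge w(t_{\max})\ge$ (top height) $\ge v_n$, so $\Theta_n\le\Theta_{n+1}$.

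The main obstacle I anticipate lies in the second inequality: tracking the sign of $1+2\lambda t_n$ when clearing denominators, and reducing an inequality over a two-dimensional region to a finite check by exploiting the monotonicity of $w$ together with the fact that on each component the maximal value of $v_n$ is a known height $H_i$. The case split between $\mathcal{A}$ and $\mathcal{D}_3$ (the latter present only for large $\alpha$) must be carried through, and one has to confirm that the two resulting algebraic conditions on $\lambda$ are implied by $q\ge 6$; these computations are routine but are where the content sits. The first inequality, by contrast, amounts to the observation that on the relevant $t$-range the graph of $g$ lies above the line $v=-t$.
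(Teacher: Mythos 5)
Your proposal is correct and follows essentially the same route as the paper: the first inequality reduces to $v_n>-t_n$ (which the paper asserts and you justify, validly, via $v_n>g(t_n)\ge -t_n$ on the relevant $t$-range), and the second reduces to $v_n\le v(t_n)$ with $v(t)=\frac{(4\lambda^2-1)t+2\lambda}{2\lambda t+1}$ decreasing, checked at the rightmost abscissa. The only cosmetic difference is that the paper compares $v\bigl(\tfrac{-2}{\lambda+4}\bigr)$ with the uniform bound $1$ (yielding the condition $\lambda>\tfrac{11+\sqrt{73}}{12}$), while you compare component-by-component with the top heights $\lambda-1$ and $\tfrac{\lambda}{2}$; both verifications hold for $q\ge 6$.
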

\begin{proof}
On region (III) it holds that $v_n > -t_n$, so from~(\ref{eq: formulae for theta}) it immediately follows that
\[\Theta_{n-1}>\Theta_{n}.\]

Using~(\ref{eq: formula Theta_n+1}) we find that $\Theta_{n+1}>\Theta_{n}$ if and only if  $v_n < \frac{(4\lambda^2-1)t_n+2\lambda }{2\lambda t_n+1}$. Put $v(t)=  \frac{(4\lambda^2-1)t+2\lambda }{2\lambda t+1}$. This function is decreasing in $t$ for $t< \frac{-1}{2\lambda}$. We find that $v\left(\frac{-2}{\lambda+4} \right)>1$ if and only if $\lambda > \frac{11+\sqrt{73}}{12}$. This last inequality is satisfied for all $\lambda_q$ with $q\geq 6$.
\end{proof}

\begin{Corollary}
For each point $(t_n,v_n)$ in Region {\rm(III)} 
$$\min \{ \Theta_{n-1},\Theta_n,  \Theta_{n-1}\} > \frac12.$$
\end{Corollary}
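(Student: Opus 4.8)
The plan is to read off the result directly from Lemma~\ref{lem: theta's on region (III)} together with the definition of the region $\mathcal{D}$. Recall from Table~\ref{ConstantsEven} that Region (III) is carved out as a subset of $\mathcal{D}$, namely those $(t_n,v_n) \in \mathcal{D}$ with $-\delta_1 \leq t_n < \frac{-1}{2\lambda}$. Hence every point of Region (III) automatically satisfies the defining inequality of $\mathcal{D}$ in~(\ref{def:D}).

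First I would unwind what that defining inequality says about $\Theta_n$. Since $\mathcal{H}_q = \frac12$, membership in $\mathcal{D}$ forces
$$
\min\left\{\frac{v_n}{1+t_nv_n}, \frac{|t_n|}{1+t_nv_n}\right\} > \frac12.
$$
On Region (III) we have $\varepsilon_{n+1} = -1$ and $t_n < 0$, so by~(\ref{eq: formulae for theta}) the second entry equals $\Theta_n = \frac{\varepsilon_{n+1}t_n}{1+t_nv_n} = \frac{|t_n|}{1+t_nv_n}$. In particular $\Theta_n > \frac12$ for every point of Region (III).

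Finally I would invoke Lemma~\ref{lem: theta's on region (III)}, which states that on Region (III) one has $\min\{\Theta_{n-1},\Theta_n,\Theta_{n+1}\} = \Theta_n$. Substituting the bound from the previous step yields $\min\{\Theta_{n-1},\Theta_n,\Theta_{n+1}\} = \Theta_n > \frac12$, which is the assertion. There is essentially no obstacle here: all of the analytic work (comparing the three coefficients and locating their minimum) was already carried out in the Lemma, and the corollary is merely the combination of that identification with the observation that membership in $\mathcal{D}$ already guarantees $\Theta_n > \frac12$. The only point worth stating explicitly is the identification of the quantity $\frac{|t_n|}{1+t_nv_n}$ appearing in~(\ref{def:D}) with $\Theta_n$ on this region, which uses the sign data $\varepsilon_{n+1}=-1$, $t_n<0$ recorded in Table~\ref{ConstantsEven}.
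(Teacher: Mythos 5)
Your argument is correct and is exactly the route the paper intends: the corollary is stated without a separate proof precisely because it follows immediately from the containment of Region (III) in $\mathcal{D}$ (so $\min\{\Theta_{n-1},\Theta_n\}>\mathcal{H}_q=\frac12$ by~(\ref{def:D})) combined with Lemma~\ref{lem: theta's on region (III)}, which identifies the minimum of the three coefficients with $\Theta_n$. Your extra remark identifying $\frac{|t_n|}{1+t_nv_n}$ with $\Theta_n$ via $\varepsilon_{n+1}=-1$, $t_n<0$ is a correct and worthwhile clarification, but does not change the substance.
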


It follows that for every point $(t_n,v_n)$ in Region (III) we have
$$
\min \{ \Theta_{n-1},\Theta_n,\Theta_{n+1} \} \leq \max_{(t,v) \in \textrm{ Region (III)} } \Theta_n (t,v).
$$

The partial derivatives of $\Theta_n$ on Region (III) are given by 
\[
\frac{\partial \Theta_n}{\partial t_n} = \frac{-1}{(1+t_nv_n)^2}<0 \quad \textrm{ and } \quad \frac{\partial \Theta_n}{\partial v_n} = \frac{t_n^2}{(1+t_nv_n)^2}>0.
\]

We find that $\Theta_n$ takes its maximum on $\mathcal{A}$ in the upper left corner, the point $(-\delta_1,H_{2p-3})$, and on $\mathcal{D}_3$ in the vertex $\left(r_{p-1},H_{2p-2}\right)$. 
Using~(\ref{eq: lp1 rp1 middle even}) we find that these maxima are given by 
\begin{eqnarray*}
\Theta_n(-\delta_1,\lambda-1) &=& \frac{\delta_1}{1-\delta_1(\lambda-1)}= \frac{1}{\alpha \lambda +1},\\
\Theta_n\left(r_{p-1},\frac{\lambda}{2}\right) &=& \frac{-r_{p-1}}{1+\frac{r_{p-1}\lambda}{2}}= \frac{2\lambda(2\alpha-1)}{4-\lambda^2}.
\end{eqnarray*}

We find that $\Theta_n(-\delta_1,\lambda-1) > \Theta_n\left(r_{p-1},\frac{\lambda}{2}\right)$ if and only if   $\alpha < \frac{\lambda-2+\sqrt{-3\lambda^2+4\lambda+20}}{4\lambda}$. For all $\lambda<2$ we have $\frac{-\lambda^2+4\lambda+4}{8\lambda}<  \frac{\lambda-2+\sqrt{-3\lambda^2+4\lambda+20}}{4\lambda}< \frac{1}{\lambda}$

\begin{Corollary}
\label{cor: theta op d3 en a}
For every point $(t_n,v_n)$ in Region (III)
$$
\min \{ \Theta_{n-1},\Theta_{n},\Theta_{n+1} \} \leq 
\begin{cases}
\frac{1}{\alpha \lambda +1} \quad \textrm{if } \alpha \in \left( \frac12, \frac{\lambda-2+\sqrt{-3\lambda^2+4\lambda+20}}{4\lambda}\right],\\
\\
\frac{2\lambda(2\alpha-1)}{4-\lambda^2}  \quad \textrm{if } \alpha \in \left( \frac{\lambda-2+\sqrt{-3\lambda^2+4\lambda+20}}{4\lambda},\frac{1}{\lambda}\right).
\end{cases}
$$
\end{Corollary}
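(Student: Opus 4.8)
The plan is to assemble the ingredients already laid out just before the statement. By Lemma~\ref{lem: theta's on region (III)}, on all of Region (III) one has $\min\{\Theta_{n-1},\Theta_n,\Theta_{n+1}\}=\Theta_n$, and since Region (III) is \emph{defined} as the part of $\mathcal{D}$ with $-\delta_1\le t_n<\tfrac{-1}{2\lambda}$, the whole statement reduces to computing
$$
\max_{(t,v)\in \textrm{Region (III)}}\Theta_n(t,v).
$$
Thus it suffices to locate and evaluate this maximum, after which the inequality $\min\{\Theta_{n-1},\Theta_n,\Theta_{n+1}\}=\Theta_n\le\max\Theta_n$ gives the claim.

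First I would use the signs of the partial derivatives recorded above the statement, $\partial\Theta_n/\partial t_n<0$ and $\partial\Theta_n/\partial v_n>0$, so that $\Theta_n$ grows as one moves left and up. Hence on any connected piece of $\mathcal{D}$ inside Region (III) the maximum sits at the top-left vertex. By Lemma~\ref{th: shape D} the pieces of $\mathcal{D}$ meeting Region (III) are $\mathcal{A}$ (the part of $\mathcal{D}_2$ there) and, when present, $\mathcal{D}_3$, whose top-left vertices are $(-\delta_1,\lambda-1)$ and $(r_{p-1},\tfrac{\lambda}{2})$. Evaluating $\Theta_n$ at these points, using~(\ref{eq: lp1 rp1 middle even}) together with $\delta_1=\frac{1}{(\alpha+1)\lambda}$, produces the two candidate values $\frac{1}{\alpha\lambda+1}$ and $\frac{2\lambda(2\alpha-1)}{4-\lambda^2}$ already displayed.

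The substantive step is then the single comparison deciding which candidate is larger. Solving $\Theta_n(-\delta_1,\lambda-1)\ge\Theta_n(r_{p-1},\tfrac{\lambda}{2})$ yields the threshold $\alpha^{*}=\frac{\lambda-2+\sqrt{-3\lambda^2+4\lambda+20}}{4\lambda}$, so the $\mathcal{A}$-vertex dominates for $\alpha\le\alpha^{*}$ and the $\mathcal{D}_3$-vertex for $\alpha>\alpha^{*}$; this is exactly the case split in the statement, and the maximum over Region (III) is the corresponding value.

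The one point requiring care---and the only genuine obstacle---is consistency of this case split with the existence of $\mathcal{D}_3$. For small $\alpha$ the region $\mathcal{D}_3$ is empty (by Lemma~\ref{th: shape D} it appears only once $\alpha>\frac{-\lambda^2+4\lambda+4}{8\lambda}$), and then only the $\mathcal{A}$-vertex is available. I would close this gap by invoking the ordering $\frac{-\lambda^2+4\lambda+4}{8\lambda}<\alpha^{*}<\frac{1}{\lambda}$ noted just before the statement: it guarantees that whenever we fall into the second case $\alpha>\alpha^{*}$ the region $\mathcal{D}_3$ is genuinely nonempty, so its vertex is a legitimate point of $\mathcal{D}$, while whenever $\mathcal{D}_3$ is absent we are automatically in the first case, where only $\frac{1}{\alpha\lambda+1}$ is relevant. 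With this verified the maximum is correctly identified in both regimes and the corollary follows at once.
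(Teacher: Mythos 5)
Your proposal is correct and follows essentially the same route as the paper: reduce to $\max\Theta_n$ via Lemma~\ref{lem: theta's on region (III)}, use the signs of the partial derivatives to locate the maximum at the top-left vertices $(-\delta_1,\lambda-1)$ of $\mathcal{A}$ and $\bigl(r_{p-1},\tfrac{\lambda}{2}\bigr)$ of $\mathcal{D}_3$, and compare the two values, with the ordering $\frac{-\lambda^2+4\lambda+4}{8\lambda}<\frac{\lambda-2+\sqrt{-3\lambda^2+4\lambda+20}}{4\lambda}<\frac{1}{\lambda}$ reconciling the case split with the possible emptiness of $\mathcal{D}_3$. Your explicit attention to that last consistency point is slightly more careful than the paper, which records the ordering but leaves the reconciliation implicit.
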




\subsection*{Orbit of points in Region (III)}
We study the orbit of points in $\mathcal{A}$ and $\mathcal{D}_3$ to derive the spectrum for $\alpha$-Rosen fractions. We call $p-1$ consecutive applications of $\mathcal{T}_\alpha$ a \emph{round}. We use M\"obius transformations fto find an explicit formula for $\mathcal{T}_\alpha^{p-1}$; see~\cite{Ford}. Let $S$ and $T$ be the generating matrices of the group $G_q$
\begin{equation}
\label{eq: S T matrices 3}
S \, =\, 
\left[ \begin{array}{cc}
	1   &  \lambda\\
 	0   &  1
\end{array}\right] \,
\textrm{ and } \,
 T \, =\, 
 \left[ \begin{array}{cc}
        0   &  -1\\
        1   &  0
\end{array}\right].
\end{equation}

Recall that in this context we consider matrices $M$ and $-M$ to be identical. 

\begin{Lemma}
\label{lem: mobius y from x}
Let $(t,v) \in \Omega_\alpha$ be given. Put $d=d(t) ,\,  \varepsilon=\varepsilon(t) $ and $A   \, =\, 
 \left[ \begin{array}{cc}
        -d\lambda   & \varepsilon\\
        1   &  0
\end{array}\right].$ Then 
$$\mathcal{T_\alpha}(t,v) = (A(t),TA \,T(v)).
$$
\end{Lemma}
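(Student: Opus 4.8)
The plan is to prove this by a direct matrix computation, using the standard convention that a matrix $\left[\begin{smallmatrix} a & b \\ c & d \end{smallmatrix}\right]$ acts on a point $z$ as the Möbius transformation $z \mapsto \frac{az+b}{cz+d}$, together with the fact (already noted in the excerpt) that $M$ and $-M$ induce the same transformation, so that the action descends to the group $G_q$. I first check the first coordinate. Applying $A$ to $t$ gives
\[
A(t) = \frac{-d\lambda\, t + \varepsilon}{t} = \frac{\varepsilon}{t} - d\lambda,
\]
which is exactly $T_\alpha(t)$ by the defining formula for $T_\alpha$ recalled at the beginning of this section. Hence the first entry of $(A(t), TAT(v))$ agrees with the first entry of $\mathcal{T}_\alpha(t,v)$.

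For the second coordinate I would compute the product $TAT$ explicitly from the matrices in~(\ref{eq: S T matrices 3}). With $T = \left[\begin{smallmatrix} 0 & -1 \\ 1 & 0\end{smallmatrix}\right]$ one finds first $TA = \left[\begin{smallmatrix} -1 & 0 \\ -d\lambda & \varepsilon\end{smallmatrix}\right]$ and then
\[
TAT = \begin{pmatrix} 0 & 1 \\ \varepsilon & d\lambda \end{pmatrix}.
\]
Reading off the induced Möbius transformation yields $TAT(v) = \frac{1}{\varepsilon v + d\lambda} = \frac{1}{d\lambda + \varepsilon v}$, which is precisely the second coordinate of $\mathcal{T}_\alpha(t,v)$ in Definition~\ref{def: T(x,y)_alpha}. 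Since the matrix action is a homomorphism, interpreting $TA\,T(v)$ either as the product matrix $TAT$ acting on $v$ or as the composition $T\circ A\circ T$ of Möbius maps gives the same result. Combining the two coordinate computations then gives $\mathcal{T}_\alpha(t,v) = (A(t), TAT(v))$, as claimed.

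There is no genuine analytic difficulty here; the statement is a reformulation of Definition~\ref{def: T(x,y)_alpha} in matrix language, and the only thing requiring care is the bookkeeping of signs and of the action convention. Note $\det A = -\varepsilon = \pm 1$, so (under the identification $A = -A$) the map $A$ is well defined as an element acting by Möbius transformations, and all the actions above are consistent. Conceptually, the conjugation by $T$ is exactly what swaps the roles of the \emph{future} coordinate $t$ and the \emph{past} coordinate $v$: $A$ governs the forward action of $T_\alpha$ on $t$, while $TAT$ governs the corresponding contraction on the dual coordinate $v$. The main point to verify carefully is therefore simply that the single product $TAT$ has been computed correctly, which the display above records.
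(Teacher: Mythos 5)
Your proof is correct and follows essentially the same route as the paper's: identify $A(t)=\frac{-d\lambda t+\varepsilon}{t}=T_\alpha(t)$ for the first coordinate, then compute the product $TAT=\left[\begin{smallmatrix}0&1\\ \varepsilon&d\lambda\end{smallmatrix}\right]$ and read off $TAT(v)=\frac{1}{d\lambda+\varepsilon v}$ for the second. The matrix computations check out, so nothing further is needed.
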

\begin{proof}
Formula~(\ref{def: chap 3 Talpha}) gives
$$
T_\alpha(t) = \displaystyle{\frac{\varepsilon}{t}}-d\lambda =  \displaystyle{\frac{-d\lambda t+\varepsilon}{t}} =  \left[ \begin{array}{cc}
        -d\lambda   & \varepsilon\\
        1   &  0
\end{array}\right]\,(t). 
$$
Now it easily follows that
\[
TA\,T(v)= \left[ \begin{array}{cc}
        0  & 1\\
        \varepsilon   &  d \lambda
\end{array}\right](v) =\frac{1}{\varepsilon v + d \lambda}.
\]
Hence $\mathcal{T}_\alpha (t,v)=  (A(t),TA\,T(v))$ as given in  Definition~\ref{def: T(x,y)_alpha}. 
\end{proof}

\begin{Lemma}
\label{lem: m announce}
Put  $\mathcal{M}=(S^{-1}T)^{p-2} \, S^{-2}T$. For $(t,v)\in \mathcal{A} \, \cup \, \mathcal{D}_3$ we have
\[
\mathcal{T_\alpha}^{p-1}(t,v)= (\mathcal{M}(t),T\mathcal{M}T(v)).
\]
\end{Lemma}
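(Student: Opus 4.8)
The plan is to iterate Lemma~\ref{lem: mobius y from x} and then to read off the digit string of the $\alpha$-Rosen expansion along one round of $p-1$ steps.

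First I would record the matrix form of $\mathcal{T}_\alpha^{\,p-1}$. Write $A^{(i)}$ for the matrix $A$ attached by Lemma~\ref{lem: mobius y from x} to the $i$th iterate, so that $A^{(i)}$ is governed by $d(t_{i-1})$ and $\varepsilon(t_{i-1})$. Applying that lemma repeatedly, and using $T^2=-I$ together with the convention $M\equiv -M$, the inner factors in the second coordinate telescope through $TA^{(i)}T\cdot TA^{(i-1)}T = TA^{(i)}A^{(i-1)}T$. A short induction then gives
\[
\mathcal{T}_\alpha^{\,p-1}(t,v)=\bigl(A^{(p-1)}\cdots A^{(1)}(t),\; T\,\bigl(A^{(p-1)}\cdots A^{(1)}\bigr)\,T(v)\bigr),
\]
so the lemma reduces to the identity $A^{(p-1)}\cdots A^{(1)}=\mathcal{M}$ on $\mathcal{A}\cup\mathcal{D}_3$.

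The heart of the argument is to pin down the digit string. Every point of $\mathcal{A}\cup\mathcal{D}_3$ lies in Region~(III), so by Table~\ref{ConstantsEven} the first step has $d(t_0)=2$ and $\varepsilon(t_0)=-1$; a direct computation from~(\ref{eq: S T matrices 3}) identifies the corresponding matrix as $A^{(1)}=S^{-2}T$. I would then follow the forward orbit of the first coordinate. On Region~(III) one has $T_\alpha(t)=-1/t-2\lambda$, and from the endpoint values $T_\alpha(-\delta_1)=l_0$ and $T_\alpha(r_{p-1})=r_p$ one checks that $T_\alpha$ carries the $t$-projection of $\mathcal{A}\cup\mathcal{D}_3$ into $J_1=[l_0,r_1)$. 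By Theorem~\ref{th: position l r even} each of the intervals $J_1,J_3,\dots,J_{2p-5}$ lies to the right of $l_0$ and strictly to the left of $-\delta_1$, hence inside the region where $d\equiv 1$ and $\varepsilon\equiv -1$; there $T_\alpha(t)=-1/t-\lambda$ maps $J_{2k-1}=[l_{k-1},r_k)$ onto $J_{2k+1}=[l_k,r_{k+1})$ for $k=1,\dots,p-2$. Thus $t_1,\dots,t_{p-2}$ pass successively through $J_1,J_3,\dots,J_{2p-5}$, each forcing $d=1$ and $\varepsilon=-1$, so that $A^{(i)}=S^{-1}T$ for $i=2,\dots,p-1$. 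Assembling the factors gives $A^{(p-1)}\cdots A^{(1)}=(S^{-1}T)^{p-2}S^{-2}T=\mathcal{M}$, which is the assertion.

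The main obstacle is the middle step: showing that for \emph{every} point of $\mathcal{A}\cup\mathcal{D}_3$ the image $T_\alpha(t_0)$ really lands in $J_1$, and that the $p-2$ subsequent images stay strictly to the left of $-\delta_1$, so that the digit remains exactly $1$ for the whole interior of the round and never jumps. This is precisely where the sharp ordering $l_{p-2}<-\delta_1<r_{p-1}$ of Theorem~\ref{th: position l r even}, together with the inequalities $r_k<-\delta_1$ for $k\le p-2$ and $l_0>-1/(\alpha\lambda)$, does the essential work. The remaining ingredients---the telescoping induction and the two matrix identifications $S^{-1}T$ and $S^{-2}T$---are routine computations from~(\ref{eq: S T matrices 3}) and Lemma~\ref{lem: mobius y from x}.
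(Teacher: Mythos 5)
Your overall strategy is the paper's: iterate Lemma~\ref{lem: mobius y from x}, telescope the conjugation by $T$ using $T^2\equiv I$, and reduce the lemma to the claim that the digit string along one round is $(d,\varepsilon)=(2,-1)$ followed by $p-2$ copies of $(1,-1)$. The telescoping induction, the identification $A^{(1)}=S^{-2}T$, and the treatment of the $\mathcal{A}$ component are all correct.

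The gap is in the step you yourself single out as the crux: the claim that $T_\alpha$ carries the $t$-projection of $\mathcal{A}\cup\mathcal{D}_3$ into $J_1=[l_0,r_1)$. This is true for $\mathcal{A}$ (the image is $[l_0,1-\lambda]$ and $1-\lambda=-H_{2p-3}<r_1$), but it is false for $\mathcal{D}_3$. The $t$-projection of $\mathcal{D}_3$ is $[r_{p-1},-2/(\lambda+4)]$, whose image is $[r_p,\,2-\tfrac{3\lambda}{2}]$, and one only has $r_p<2-\tfrac{3\lambda}{2}<l_1$; in general this interval is \emph{not} contained in $[l_0,r_1)$. For instance, with $q=6$, $\lambda=\sqrt{3}$ and $\alpha=0.575$ one computes $r_1\approx-0.728$, while $2-\tfrac{3\lambda}{2}\approx-0.598$ and $r_p\approx-0.674$, so the whole image of the $\mathcal{D}_3$-projection lies in $J_2$, not in $J_1$. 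Consequently your subsequent bookkeeping, which tracks the orbit through the odd-indexed chain $J_1\to J_3\to\cdots\to J_{2p-5}$, does not apply to points coming from $\mathcal{D}_3$. The conclusion you want is still true: the image is contained in $[l_0,l_1)=J_1\cup J_2$, all of $[l_0,l_1)$ lies to the left of $-\delta_1$ (since $l_1\le l_{p-2}<-\delta_1$ by Theorem~\ref{th: position l r even}), and $T_\alpha$ maps $[l_{k-1},l_k)$ onto $[l_k,l_{k+1})$ for $k=1,\dots,p-2$, so the digits remain $d=1$, $\varepsilon=-1$ for the remaining $p-2$ steps. This is essentially why the paper treats $\mathcal{A}$ and $\mathcal{D}_3$ as two separate cases, computing the endpoint images $T_\alpha(r_{p-1})=r_p$ and $T_\alpha(-2/(\lambda+4))=2-\tfrac{3\lambda}{2}$ and locating them between $r_p$ and $l_1$ rather than inside $J_1$. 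As written, your proof asserts a false containment at its central step and needs this repair.
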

\begin{proof}
First assume $(t,v) \in \mathcal{A}$, so $-\delta_1 \leq t \leq \frac{-1}{\lambda+1}<-\delta_2$. We have $\varepsilon(t)=-1$ and $d(t)=2$, and
$$
T_\alpha(t)= \frac{-1}{t}-2\lambda.
$$
We note that
$$
S^{-2}T (t) = \left[ \begin{array}{cc}
        -2 \lambda &-1  \\
         1 & 0
\end{array}\right](t) = \frac{-1}{t}-2\lambda.
$$
We find
$$
T_\alpha(-\delta_1) =(\alpha-1) \lambda = l_0\quad \textrm{and} \quad T_\alpha\left(\frac{-1}{\lambda+1}\right) =  1-\lambda = -H_{2p-3}.
$$
As noted in the proof of Lemma~\ref{th: shape D}, one has $-H_{2p-3} <r_1$. From Theorem~\ref{th: position l r even} and the above estimates it follows that for both $-\delta_1$ and $r_{p-1}$ the following $p-2$ applications of $T_\alpha$ give $\varepsilon=-1$ and $d=1$.  Thus we use $p-2$ times
$$
T_\alpha(t)=\frac{-1}{t}-\lambda = \frac{-\lambda t -1}{t} = \left[ \begin{array}{cc}
        -\lambda  & -1\\
       1  &  0
\end{array}\right](t)=V(t).
$$
Combining the first step with these $p-2$ steps we find $\mathcal{M}= (S^{-1}T)^{p-2}  S^{-2}T$ for points  $(t,v) \in \mathcal{A}$.  From~Lemma~\ref{lem: mobius y from x} and the fact that $T\,T=I$ we find that the second coordinate is given by $T\mathcal{M}T$.

Now assume  $(t,v) \in \mathcal{D}_3$. In this case $ \alpha \in \left( \frac{-\lambda^2+4\lambda+4}{8\lambda},\frac{1}{\lambda}\right)$ and $-\delta_1 < r_{p-1} \leq t \leq \frac{-2}{\lambda+4}<-\delta_2$. We again have $\varepsilon(t)=-1$ and $d(t)=2$, and find
$$
T_\alpha(r_{p-1}) =  \frac{2+\lambda^2(1-3\alpha)}{(2\alpha-1)\lambda}  =r_p \quad \textrm{and} \quad T_\alpha\left(\frac{-2}{\lambda+4}\right) = 2-\frac{3\lambda}{2}.
$$
For  $ \alpha \in \left( \frac{-\lambda^2+4\lambda+4}{8\lambda},\frac{1}{\lambda}\right)$ we have $r_p <  2-\frac{3\lambda}{2}< l_1$, so like before we apply $T_\alpha$ in the next $p-2$ steps with $\varepsilon=-1$ and $d=1$.
\end{proof}

We use the auxiliary sequence $B_n$ from~(\ref{eq: bn chap 3}) to find powers of $S^{-1}T$.
\begin{Lemma}
\label{lem: st explicit}
For $n \geq 1$ we have
$$
(S^{-1}T)^n \, =\, 
 \left[ \begin{array}{cc}
      -B_{n+1}  & -B_n\\
        B_n   &  B_{n-1}
\end{array}\right].
$$
\end{Lemma}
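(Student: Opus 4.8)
The plan is to prove the formula by induction on $n$, where the only substantive ingredient is the defining recurrence $B_{n+1}=\lambda B_n-B_{n-1}$ from~(\ref{eq: bn chap 3}). First I would record the single matrix being iterated. From the definitions of $S$ and $T$ in~(\ref{eq: S T matrices 3}) one computes
\[
S^{-1}T=\begin{bmatrix} -\lambda & -1\\ 1 & 0\end{bmatrix}.
\]
For the base case $n=1$ the right-hand side of the claim reads $\left[\begin{smallmatrix} -B_2 & -B_1\\ B_1 & B_0\end{smallmatrix}\right]$, and substituting $B_0=0$, $B_1=1$, $B_2=\lambda B_1-B_0=\lambda$ reproduces exactly the matrix $S^{-1}T$ above. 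So the base case is immediate.

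For the inductive step I would assume the claimed shape for exponent $n$ and compute the product $(S^{-1}T)^n\cdot(S^{-1}T)$ entrywise. Each of the four entries is a combination of $B_{n+1},B_n,B_{n-1}$ in which the two nonzero terms collapse into a single $B$-value by the recurrence: the top-left entry becomes $\lambda B_{n+1}-B_n=B_{n+2}$, the bottom-left entry becomes $-\lambda B_n+B_{n-1}=-B_{n+1}$, and the two right-hand entries are already $B_{n+1}$ and $-B_n$. Collecting these yields a matrix of the claimed form with the index raised by one.

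The one point that needs a moment of care is a global sign. The entrywise computation of the previous paragraph gives
\[
\begin{bmatrix} -B_{n+1} & -B_n\\ B_n & B_{n-1}\end{bmatrix}\begin{bmatrix} -\lambda & -1\\ 1 & 0\end{bmatrix}=\begin{bmatrix} B_{n+2} & B_{n+1}\\ -B_{n+1} & -B_n\end{bmatrix},
\]
which is the \emph{negative} of $\left[\begin{smallmatrix} -B_{n+2} & -B_{n+1}\\ B_{n+1} & B_n\end{smallmatrix}\right]$; as honest matrices one therefore finds $(S^{-1}T)^n=(-1)^{n-1}\left[\begin{smallmatrix} -B_{n+1} & -B_n\\ B_n & B_{n-1}\end{smallmatrix}\right]$. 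This is exactly where I would invoke the standing convention, recalled just before Lemma~\ref{lem: mobius y from x}, that $M$ and $-M$ are identified. Under that identification the alternating factor $(-1)^{n-1}$ disappears and the stated equality holds for every $n\ge 1$. I expect no real obstacle here: the computation is short, and the only subtlety is this harmless alternating sign, which the $M\equiv-M$ convention absorbs.
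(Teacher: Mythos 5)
Your proof is correct and follows the same induction-on-$n$ argument as the paper, with the recurrence $B_{n+1}=\lambda B_n-B_{n-1}$ doing all the work in the inductive step. You are in fact more careful than the paper's own proof, which writes the matrix product as literally equal to the claimed form and never mentions the alternating factor $(-1)^{n-1}$ that, as you correctly observe, is absorbed by the standing identification of $M$ with $-M$.
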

\begin{proof}
We use induction. For $n=1$ we have
$$
S^{-1}T \, =\, 
 \left[ \begin{array}{rr}
     -\lambda  & -1\\
        1   & 0
\end{array}\right] \, =\, 
 \left[ \begin{array}{cc}
      -B_{2}  & -B_1\\
        B_1   &  B_{0}
\end{array}\right].
$$
Assume that
$$
(S^{-1}T)^{n-1} \, =\, 
 \left[ \begin{array}{cc}
      -B_{n}  & -B_{n-1}\\
        B_{n-1}   &  B_{n-2}
\end{array}\right].
$$
We find
$$
(S^{-1}T)^n \, =\, 
 \left[ \begin{array}{cc}
      -B_{n}  & -B_{n-1}\\
        B_{n-1}   &  B_{n-2}
\end{array}\right] \, \left[ \begin{array}{cc}
     -\lambda  & -1\\
        1   & 0
\end{array}\right] \, =\, 
 \left[ \begin{array}{cc}
      -B_{n+1}  & -B_n\\
        B_n   &  B_{n-1}
\end{array}\right] . 
$$
\end{proof}

\begin{Lemma}
\label{lem: M T  explicit}
The function $\mathcal{T_\alpha}^{p-1}$ is explicitly given by
\[
\mathcal{T_\alpha}^{p-1}(t,v)= \frac{B_p}{2} \left(  \left[
        \begin{array}{cc}
        -\lambda^2 -2 & -\lambda \\
        \lambda^3 - \lambda & \lambda^2-2
        \end{array}
        \right]
(t), 
 \left[\begin{array}{cc}
        -\lambda^2 + 2 &\lambda^3 - \lambda \\
         - \lambda & \lambda^2+2
        \end{array}
        \right](v)\right). 
\]
\end{Lemma}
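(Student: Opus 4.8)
The plan is to combine the factorization already obtained in Lemma~\ref{lem: m announce} with the closed form for the powers of $S^{-1}T$ from Lemma~\ref{lem: st explicit}. Since Lemma~\ref{lem: m announce} tells us that for $(t,v)\in\mathcal{A}\cup\mathcal{D}_3$ we have $\mathcal{T}_\alpha^{p-1}(t,v)=(\mathcal{M}(t),T\mathcal{M}T(v))$ with $\mathcal{M}=(S^{-1}T)^{p-2}S^{-2}T$, the whole statement reduces to multiplying out the matrix $\mathcal{M}$ explicitly and then conjugating it by $T$. Recall throughout that $M$ and $-M$ represent the same M\"obius transformation, which is what lets us match the signs at the end.

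First I would read off $S^{-2}T=\left[\begin{array}{cc} -2\lambda & -1\\ 1 & 0\end{array}\right]$ directly from~(\ref{eq: S T matrices 3}), and substitute $n=p-2$ into Lemma~\ref{lem: st explicit} to obtain $(S^{-1}T)^{p-2}=\left[\begin{array}{cc} -B_{p-1} & -B_{p-2}\\ B_{p-2} & B_{p-3}\end{array}\right]$. Carrying out the product gives
\[
\mathcal{M}=\left[\begin{array}{cc} 2\lambda B_{p-1}-B_{p-2} & B_{p-1}\\ -2\lambda B_{p-2}+B_{p-3} & -B_{p-2}\end{array}\right],
\]
so that every entry is a $\Z$-linear combination of $B_{p-1},B_{p-2},B_{p-3}$. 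Now I would invoke the even-case relations~(\ref{eq: B_n even relations}), namely $B_{p-1}=\frac{\lambda}{2}B_p$ and $B_{p-2}=\bigl(\frac{\lambda^2}{2}-1\bigr)B_p$, together with the recursion~(\ref{eq: bn chap 3}) in the form $B_{p-3}=\lambda B_{p-2}-B_{p-1}=\bigl(\frac{\lambda^3}{2}-\frac{3\lambda}{2}\bigr)B_p$. Substituting these and factoring out $\frac{B_p}{2}$ turns each entry into an integer polynomial in $\lambda$; the resulting matrix is the negative of the first-coordinate matrix in the statement, which is the same transformation under the identification $M\equiv -M$.

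Finally, the second coordinate is governed by $T\mathcal{M}T$, and since $T=\left[\begin{array}{cc} 0 & -1\\ 1 & 0\end{array}\right]$ satisfies $TT=I$, conjugating $\left[\begin{array}{cc} a & b\\ c & d\end{array}\right]$ by $T$ sends it to $\left[\begin{array}{cc} -d & c\\ b & -a\end{array}\right]$; applying this to $\mathcal{M}$ and again using $M\equiv -M$ yields the claimed second-coordinate matrix. I do not expect any genuine obstacle here: the argument is a direct matrix computation, and the only points requiring care are the bookkeeping for $B_{p-3}$ via the recursion and the consistent use of the sign convention $M\equiv -M$ to reconcile the overall signs of both matrices with the stated form.
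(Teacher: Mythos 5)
Your proposal is correct and follows essentially the same route as the paper: both reduce the statement to computing $\mathcal{M}=(S^{-1}T)^{p-2}S^{-2}T$ via Lemma~\ref{lem: m announce}, evaluate $(S^{-1}T)^{p-2}$ with Lemma~\ref{lem: st explicit}, and use~(\ref{eq: B_n even relations}) and the recursion~(\ref{eq: bn chap 3}) to express $B_{p-1},B_{p-2},B_{p-3}$ in terms of $B_p$, finishing with the conjugation by $T$ for the second coordinate. The only (immaterial) difference is that you multiply the matrices before substituting the $B$-relations, whereas the paper substitutes first; the sign discrepancy you resolve via $M\equiv -M$ is handled identically in the paper.
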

\begin{proof}
We compute $\mathcal{M}$ given in Lemma~\ref{lem: m announce} by $\mathcal{M}=(S^{-1}T)^{p-2} S^{-2}T$.

From Lemma~\ref{lem: st explicit} we find
$$
(S^{-1}T)^{p-2} \, =\, 
 \left[ \begin{array}{cc}
      -B_{p-1}  & -B_{p-2}\\
        B_{p-2}   &  B_{p-3}
\end{array}\right].
$$

Using~(\ref{eq: B_n even relations}) and~(\ref{eq: bn chap 3}) we find
$$
B_{p-1} = \frac{\lambda}{2}B_p, \quad B_{p-2}= \left(\frac{\lambda^2}{2}-1\right) B_p  \quad \textrm{and} \quad B_{p-3} = \left( \frac{\lambda^3}{2} -\frac{3\lambda}{2} \right)B_p.
$$
So
$$
(S^{-1}T)^{p-2} \, =\, 
 \frac{B_p}{2}
\left[\begin{array}{cc}
\lambda & \lambda^2-2\\
-\lambda^2 +2 & - \lambda^3 +3\lambda 
\end{array}
\right],
$$
and we find
\[
\mathcal{M}=(S^{-1}T)^{p-2}  S^{-2}T =  \frac{B_p}{2}  \left[
        \begin{array}{cc}
        -\lambda^2 -2 & - \lambda \\
        \lambda^3 - \lambda & \lambda^2-2
        \end{array}
        \right].
\]

The second coordinate is easy to calculate.
\end{proof}

With the explicit fomula for $\mathcal{M}(t)$ we can easily compute its fixed points, they are given by
$$
t_1 = \frac{-1}{\lambda+1}
\quad \textrm{and} \quad
t_2 = \frac{-1}{\lambda-1}.
$$
The fixed points of $T\mathcal{M}T(v)$ are given by
$$
v_1 = \lambda+1
\quad \textrm{and} \quad
v_2 = \lambda-1.
$$

\begin{Corollary}
\label{cor: even fixed point}
The point $\left(\frac{-1}{\lambda+1},\lambda-1 \right)$ is a fixed point of $\mathcal{T}_\alpha^{p-1}(t,v).$ 
\end{Corollary}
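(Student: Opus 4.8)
The plan is to read the claim straight off the explicit factored form of $\mathcal{T}_\alpha^{p-1}$ established in Lemma~\ref{lem: M T explicit}. That lemma shows that on $\mathcal{A}\cup\mathcal{D}_3$ the map $\mathcal{T}_\alpha^{p-1}$ decouples into two independent M\"obius actions: the first coordinate transforms by $\mathcal{M}$ and the second by $T\mathcal{M}T$. Since a M\"obius transformation is unchanged when its representing matrix is rescaled by a nonzero scalar, the leading factor $B_p/2$ plays no role in the action, and the fixed points of each coordinate map are exactly those recorded immediately after Lemma~\ref{lem: M T explicit}.

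First I would confirm those fixed points by a short computation. Setting $\mathcal{M}(t)=t$ and clearing denominators gives $(\lambda^3-\lambda)t^2+2\lambda^2 t+\lambda=0$; dividing by $\lambda$ yields $(\lambda^2-1)t^2+2\lambda t+1=0$, whose roots are $t_1=-1/(\lambda+1)$ and $t_2=-1/(\lambda-1)$. Likewise $T\mathcal{M}T(v)=v$ reduces to $v^2-2\lambda v+(\lambda^2-1)=0$, with roots $v_1=\lambda+1$ and $v_2=\lambda-1$. Thus the first coordinate of $\left(\tfrac{-1}{\lambda+1},\lambda-1\right)$ is the fixed point $t_1$ of $\mathcal{M}$, and its second coordinate is the fixed point $v_2$ of $T\mathcal{M}T$.

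The only substantive point is to check that the chosen pairing lies in the domain where the factored formula is valid. Among the four candidate combinations $(t_i,v_j)$, the point $\left(\tfrac{-1}{\lambda+1},\lambda-1\right)=(t_1,v_2)$ is precisely one of the three vertices of $\mathcal{A}$ listed in the description of Region (III). Hence it belongs to $\mathcal{A}\subset\mathcal{A}\cup\mathcal{D}_3$, so Lemma~\ref{lem: M T explicit} applies there. Because its first coordinate is fixed by $\mathcal{M}$ and its second coordinate is fixed by $T\mathcal{M}T$, the coordinate-wise action leaves it unchanged, giving $\mathcal{T}_\alpha^{p-1}\!\left(\tfrac{-1}{\lambda+1},\lambda-1\right)=\left(\tfrac{-1}{\lambda+1},\lambda-1\right)$, as claimed.

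Since everything reduces to the already-established Lemma~\ref{lem: M T explicit} together with two quadratic solves, there is no genuine obstacle and the proof is immediate. The single detail that warrants attention is verifying that it is the combination $(t_1,v_2)$ --- and not, for instance, $(t_2,v_1)$ --- that sits inside $\mathcal{A}$, so that the factored form of $\mathcal{T}_\alpha^{p-1}$ legitimately governs the dynamics at this point.
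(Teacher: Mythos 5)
Your proposal is correct and follows essentially the same route as the paper: the paper also reads the corollary off the explicit form of $\mathcal{M}$ from Lemma~\ref{lem: M T  explicit}, solving the same two quadratics to get the fixed points $t_{1},t_{2}$ of $\mathcal{M}$ and $v_{1},v_{2}$ of $T\mathcal{M}T$ and then pairing $\left(\frac{-1}{\lambda+1},\lambda-1\right)$. Your extra remark that this point is a vertex of $\mathcal{A}$ (so the factored formula is valid there) is a sensible check that the paper leaves implicit.
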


\begin{Rmk}
If $\varepsilon=-1$ and $d$ is constant, then $T_\alpha(t)$ is strictly increasing in $t$. From this and the above corollary we find  that for $(t,v) \in \mathcal{A}$ we have $\mathcal{M}(t)\leq t$, whilst for points  $(t,v) \in \mathcal{ D}_3$ we have $\mathcal{M}(t)> t$.

\end{Rmk}

\subsection*{Flushing}
\label{sec: flush}
We say a point is \emph{flushed} when it is mapped from $\mathcal{D}$ to a point outside of $\mathcal{D}$ by $\mathcal{T}_\alpha$. We look at the flushing of points in $\mathcal{A}$ and $\mathcal{D}_3$. 

\subsubsection*{Flushing from \texorpdfstring{$\mathcal{A} $}{A}}
Combining all the above we find that the vertices of $\mathcal{A}$ are mapped as follows under $\mathcal{T}^{p-1}$.
\begin{eqnarray*}
\left(-\delta_1, \lambda-1 \right)& \mapsto & \left(\frac{\alpha\lambda^2-2}{(-\alpha\lambda^2+2\alpha+1)\lambda},\lambda-1 \right) = (l_{p-2},\lambda-1)\\
\left(\frac{-1}{\lambda+1},\lambda-1 \right) & \mapsto& \left(\frac{-1}{\lambda+1},\lambda-1 \right), \\
\left(-\delta_1,g(-\delta_1) \right)  &\mapsto& \left(l_{p-2},\lambda-\frac{(2\alpha-1)\lambda-4}{(\alpha\lambda-2)\lambda-2}\,\right) .
\end{eqnarray*}

\begin{figure}[!ht]
\includegraphics[height=30mm]{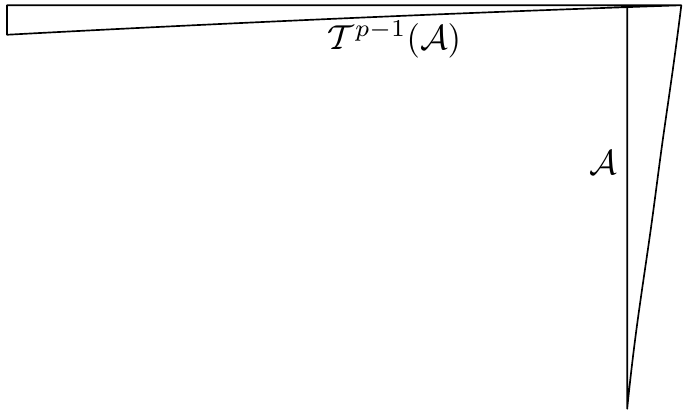}
\caption{$\mathcal{A}$ and its $(p-1)$st transformation under $\mathcal{T}_\alpha$.}
\label{im: danger normal even}
\end{figure}

The image of $\mathcal{A}$ under $\mathcal{T}^{p-1}_\alpha$ is a long, thin ``triangle" that has a ``triangular" intersection with $\mathcal{A}$; see Figure~\ref{im: danger normal even}. We notice in particular that $(-\delta_1,H_{2p-3})$ is included in $\mathcal{T}_\alpha^{p-1}(\mathcal{A})$. However, the part of $\mathcal{T}_\alpha^{p-1}(\mathcal{A})$ on the left-hand side of the line $t=-\delta_1$ is in Region (II).  So these points are flushed in the next application of $\mathcal{T}_\alpha$. 

We conclude that for the points $(t_n,v_n) \in \mathcal{A}$ with  $T_\alpha^{p-1}(t_n)<-\delta_1$ we have
\[
\min\{\Theta_{n-1},\Theta_{n},\Theta_{n+1},\dots,\Theta_{n+p-1} > \mathcal{H}_q\}  \quad \textrm{and} \quad \Theta_{n+p}< \mathcal{H}_q.
\]

The following theorem generalizes this idea to multiple rounds. Recall from~(\ref{def hat t and v even}) that we define $\tau_k$ by
$$
(\tau_k,\nu_k)= \mathcal{T}_{\alpha}^{-k(p-1)}(-\delta_1,H_1 ).
$$

\begin{Theorem}
\label{th: flushing} Let $k\geq 1$ be an integer. 

Any point $(t,v)$ of $\mathcal{A}$ is flushed after exactly $k$ rounds if and only if $$ \tau_{k-1} \leq t <
\tau_{k}.
$$

For any $x$ with $\tau_{k-1}\leq t_n < \tau_k$,
$$
\min\{\Theta_{n-1},\Theta_{n},\ldots,\Theta_{n+k(p-1)-1}, \Theta_{n+k(p-1)}\}>\mathcal{H}_q,
$$
while
$$
\Theta_{n+k(p-1)+1} <\mathcal{H}_q.
$$
\end{Theorem}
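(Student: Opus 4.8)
The plan is to reduce the whole statement to the one-dimensional dynamics of the M\"obius map $\mathcal{M}$ of Lemma~\ref{lem: M T  explicit}, which describes the first coordinate of a round $\mathcal{T}_\alpha^{p-1}$ on $\mathcal{A}$, and then to bootstrap the $k=1$ case already proved before the theorem by an induction on $k$. First I would record the qualitative behaviour of $\mathcal{M}$ on the interval $\left[-\delta_1,-\tfrac{1}{\lambda+1}\right)$ spanned by the $t$-range of $\mathcal{A}$. Each of the $p-1$ factors of $\mathcal{M}$ has the form $t\mapsto \tfrac{\varepsilon}{t}-d\lambda$ with $\varepsilon=-1$, so $\mathcal{M}$ is strictly increasing; by Corollary~\ref{cor: even fixed point} and the Remark following it, $-\tfrac{1}{\lambda+1}$ is a repelling fixed point sitting at the right endpoint, with $\mathcal{M}(t)<t$ throughout. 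Hence the points $\tau_k=\mathcal{M}^{-k}(-\delta_1)$ form a strictly increasing sequence $\tau_0=-\delta_1<\tau_1<\cdots$ converging to $-\tfrac{1}{\lambda+1}$, and the intervals $[\tau_{k-1},\tau_k)$ tile $\left[-\delta_1,-\tfrac{1}{\lambda+1}\right)$. Since $\mathcal{M}$ is increasing, $\tau_{k-1}\le t<\tau_k$ is equivalent to $-\delta_1\le \mathcal{M}^{j}(t)$ for $0\le j\le k-1$ together with $\mathcal{M}^{k}(t)<-\delta_1$: the first coordinate stays in the $t$-range of $\mathcal{A}$ through $k-1$ rounds and drops below $-\delta_1$ at the $k$-th.

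Next I would promote this to the two-dimensional map. Two facts are needed: (i) a point of $\mathcal{A}$ with $t\ge\tau_1$ is mapped by one round back into $\mathcal{A}$; and (ii) once $\mathcal{M}(t)<-\delta_1$ the image lies in $\mathcal{D}_2\cap\mbox{Region (II)}$, so it survives in $\mathcal{D}$ for one more step but is flushed by the next application of $\mathcal{T}_\alpha$. For (i) I would use that the second-coordinate map $T\mathcal{M}T$ has $\lambda-1$ as an attracting fixed point lying on the top edge of $\mathcal{A}$; since every $(t,v)\in\mathcal{A}$ has $v\le\lambda-1$, its image has second coordinate still $\le\lambda-1$ but strictly larger, so a round moves the point left and up. As the graph of $g$ bounding $\mathcal{A}$ from below is increasing, moving left and up keeps the point strictly above $g$, hence in $\mathcal{D}_2$; combined with $\mathcal{M}(t)\in[\tau_{k-2},\tau_{k-1})$ this places the image inside $\mathcal{A}$. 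For (ii) I would invoke the vertex computation preceding the theorem, identifying $\mathcal{T}_\alpha^{p-1}(\mathcal{A})$ as the thin triangle whose portion to the left of $t=-\delta_1$ lies in Region (II); by the analysis of Region (II) such points map into $\Omega_\alpha^+$, forcing the next approximation coefficient below $\mathcal{H}_q$.

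With (i) and (ii) the induction runs cleanly, the base case $k=1$ being the displayed implication preceding the theorem. A point $(t,v)\in\mathcal{A}$ with $\tau_{k-1}\le t<\tau_k$ is carried by one round to a point of $\mathcal{A}$ whose first coordinate lies in $[\tau_{k-2},\tau_{k-1})$, which by the inductive hypothesis is flushed after exactly $k-1$ further rounds; prepending the first round gives exactly $k$. The approximation inequalities then assemble from the equivalence $\min\{\Theta_{m-1},\Theta_m\}>\mathcal{H}_q\iff(t_m,v_m)\in\mathcal{D}$ of~(\ref{def:D}): remaining in $\mathcal{D}$ at times $m=n,\dots,n+k(p-1)$ yields $\Theta_{n-1},\dots,\Theta_{n+k(p-1)}>\mathcal{H}_q$, while the flush at step $n+k(p-1)+1$ out of Region (II) gives $\Theta_{n+k(p-1)+1}<\mathcal{H}_q$.

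The main obstacle I anticipate is showing that the orbit stays inside $\mathcal{D}$ throughout the $p-2$ intermediate applications of $\mathcal{T}_\alpha$ within each round, not only at the round boundaries. This requires tracking, via the factorization $\mathcal{M}=(S^{-1}T)^{p-2}S^{-2}T$ of Lemma~\ref{lem: m announce}, which intervals $J_n$ and heights $H_n$ the successive images occupy, and verifying that none of them enters Region (II) prematurely or drops below the graphs of $f$ and $g$. This is exactly where the explicit height relations $H_{2p-3}=\lambda-1$, $H_{2p-2}=\tfrac{\lambda}{2}$ and the ordering of the $l_n$ and $r_n$ from Theorem~\ref{th: position l r even} do the real work.
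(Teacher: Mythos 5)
Your proposal follows essentially the same route as the paper: the paper's own proof is a two-line deferral to the preceding discussion (the vertex images of $\mathcal{A}$ under $\mathcal{T}_\alpha^{p-1}$, the repelling fixed point $\left(\frac{-1}{\lambda+1},\lambda-1\right)$ with $\mathcal{M}(t)\leq t$ on $\mathcal{A}$, the definition of $\tau_k$ as pullbacks of $-\delta_1$, and the observation that the part of $\mathcal{T}_\alpha^{p-1}(\mathcal{A})$ left of $t=-\delta_1$ lies in Region (II) and is hence flushed at the next step), and your write-up is a faithful, more explicit expansion of exactly those ingredients together with the induction on $k$. The "main obstacle" you flag --- verifying that the orbit stays in $\mathcal{D}$ at the $p-2$ intermediate steps of each round --- is a point the paper likewise leaves implicit, so identifying it is a virtue of your version rather than a divergence from the paper's argument.
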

\begin{proof}
A point $(t,v) \in \mathcal{A}$ gets flushed after exactly $k$ rounds if $k$ is minimal such that $\mathcal{T}_\alpha^{k(p-1)}(t,v)$ has
its first coordinate smaller than $-\delta_1$. The result follows from  the definition of $\tau_{k}$ and the above.
\end{proof}

\subsubsection*{Flushing from \texorpdfstring{$\mathcal{D}_3 $}{D3}}

Recall that the vertices of $\mathcal{ D}_3$ are given by  $\left( r_{p-1},g(r_{p-1})\right)$, $\left(\frac{-2}{\lambda+4},\frac{\lambda}{2}\right)$  and $(r_{p-1},\frac{\lambda}{2})$; see also Figure~\ref{im: danger zones even}. We have $\frac{-1}{\lambda+1}< r_{p-1} <\frac{-2}{\lambda+4}$ and $\lambda-1 <g(r_{p-1}) < \frac{\lambda}{2}$.  The fixed point $\left(\frac{-1}{\lambda+1},\lambda-1 \right)$ of $\mathcal{T}^{p-1}$ is repelling in the $t$-direction and attractive in the $v$-direction. 

\begin{Lemma}
\label{lem: d flushen}
There exists a positive integer $K$ such that all points in $\mathcal{ D}_3$ are flushed after $K$ rounds.
\end{Lemma}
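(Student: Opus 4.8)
The plan is to exploit the explicit product structure of one round given in Lemma~\ref{lem: M T  explicit}: on $\mathcal{D}_3$ the map $\mathcal{T}_\alpha^{p-1}$ decouples into two Möbius transformations $t \mapsto \mathcal{M}(t)$ and $v \mapsto T\mathcal{M}T(v)$, whose fixed points were computed to be $t_1 = \frac{-1}{\lambda+1}$, $t_2 = \frac{-1}{\lambda-1}$ and $v_1 = \lambda+1$, $v_2 = \lambda-1$. Since $\left(\frac{-1}{\lambda+1},\lambda-1\right)$ is repelling in the first coordinate and attractive in the second, and since (as noted in the proof of Lemma~\ref{th: shape D}) this point is exactly the intersection of the graph of $g$ with the line $v = H_{2p-3} = \lambda - 1$, the idea is that under repeated rounds the $v$-coordinate contracts geometrically down to $\lambda - 1$ while the $t$-coordinate moves strictly to the right, so that the orbit is squeezed across the lower boundary of $\mathcal{D}_3$, which is the graph of $g$.

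First I would record that $\mathcal{D}_3 \subset$ Region~(III): indeed $-\delta_1 < r_{p-1} \leq t \leq \frac{-2}{\lambda+4} < \frac{-1}{2\lambda}$ for $q \geq 6$ (the last inequality being $3\lambda > 4$), so every point of $\mathcal{D}_3$ has $\varepsilon = -1$, $d = 2$ and the formula of Lemma~\ref{lem: M T  explicit} is valid at the start of each round for as long as the orbit remains in $\mathcal{D}_3$. Writing $(t_k, v_k) = \mathcal{T}_\alpha^{k(p-1)}(t,v)$, I would use the standard conjugacy of a Möbius map to its multiplier to obtain
$$
\frac{v_k - (\lambda-1)}{v_k - (\lambda+1)} = \rho^k\,\frac{v_0 - (\lambda-1)}{v_0 - (\lambda+1)},\qquad 0 < \rho < 1,
$$
valid as long as $(t_j,v_j)\in\mathcal{D}_3$ for $j \le k$. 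Because every point of $\mathcal{D}_3$ satisfies $\lambda - 1 < v_0 \le \frac{\lambda}{2} < \lambda + 1$, the right-hand side tends to $0$ geometrically and the denominator stays bounded away from $0$, so $v_k \downarrow \lambda - 1$ at a rate that is uniform over $\mathcal{D}_3$.

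Simultaneously, the Remark preceding the lemma gives $\mathcal{M}(t) > t$ on $\mathcal{D}_3$, so $t_k$ is strictly increasing and in particular $t_k \ge r_{p-1} > \frac{-1}{\lambda+1}$ for every $k$ for which $(t_k,v_k)\in\mathcal{D}_3$. As $g$ is strictly increasing, this yields $g(t_k) \ge g(r_{p-1}) > g\!\left(\frac{-1}{\lambda+1}\right) = \lambda - 1$. Combining the two monotonicities, I would take $K$ to be the least integer with $v_K - (\lambda-1) < g(r_{p-1}) - (\lambda-1)$; such a $K$ exists and is uniform over $\mathcal{D}_3$ by the geometric decay above. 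For this $K$ one gets $v_K < g(r_{p-1}) \le g(t_K)$, so by~(\ref{eq: relations f g theta}) we have $\Theta < \mathcal{H}_q$ at $(t_K, v_K)$, i.e. the point has left $\mathcal{D}$. Since $t_k$ increases and $v_k$ decreases throughout, the orbit can exit $\mathcal{D}_3$ neither through the left edge $t = r_{p-1}$ nor through the top edge $v = H_{2p-2}$; it must cross the graph of $g$, which is precisely where it drops out of $\mathcal{D}$.

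The main obstacle is bookkeeping the validity of the one-round formula across several rounds: the product formula of Lemma~\ref{lem: M T  explicit} applies only while the point sits in $\mathcal{A}\cup\mathcal{D}_3$, so I must argue that, as long as the orbit has not yet been flushed, it indeed stays inside $\mathcal{D}_3$ (so that $\varepsilon=-1$, $d=2$ persist), and that the first instant $v_k < g(t_k)$ genuinely certifies departure from all of $\mathcal{D}$ rather than a passage into another component. Both facts follow from the monotonicity of $t_k$ and $v_k$ together with the inclusion $\mathcal{D}_3 \subset$ Region~(III) and the position of the fixed point on the graph of $g$, but they must be stated carefully so that the uniform choice of $K$ is legitimate.
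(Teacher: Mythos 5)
Your proof is correct and follows essentially the same route as the paper: both arguments show that each round contracts the second coordinate toward $\lambda-1$ at a rate bounded uniformly away from $1$, and conclude from the fact that $\mathcal{D}_3$ lies uniformly above $v=\lambda-1$ (namely above $g(r_{p-1})$). The only cosmetic difference is that you invoke the multiplier normal form of the M\"obius map $T\mathcal{M}T$, whereas the paper computes the one-round contraction factor $\frac{2-\lambda}{\lambda+2-\lambda v_1}<1-\frac{\lambda}{2}$ explicitly; your additional bookkeeping on why leaving $\mathcal{D}_3$ means leaving $\mathcal{D}$ is a welcome (and correct) elaboration of what the paper leaves implicit.
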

\begin{proof}
Let $(t_1,\lambda-1+v_1)$ be a point in $\mathcal{ D}_3$, it follows that $0< v_1 < 1$. With Lemma~\ref{lem: M T  explicit} we find for the second coordinate
$$
\pi_2(\mathcal{T}^{p-1}_\alpha(t_1,\lambda-1+v_1)) = \frac{(-\lambda^2+2)(\lambda-1+v_1)+\lambda^3-\lambda}{-\lambda(\lambda-1+v_1)+\lambda^2+2} = \lambda-1+\frac{2-\lambda }{\lambda+2-\lambda v_1} v_1.
$$ 
As $\frac{2-\lambda }{\lambda+2-\lambda v_1} < 1 - \frac{\lambda}{2}<1$, the result follows.
\end{proof}

\begin{Rmk}
We could divide $\mathcal{ D}_3$ in parts that get flushed after $1,2,\dots,K$ rounds, respectively. As we saw in the example for $q=4$ the formulas needed to do this are rather ugly and in this general case they only get worse. For our main result we only need that after finitely many rounds all points are flushed out of $\mathcal{ D}_3$.  
{\phantom{xx}}
\end{Rmk}

We still need to consider points in Region (I). It follows from Theorem~\ref{th: position l r even} that after at most $p-2$ steps each such point is either flushed or mapped into $\mathcal{A} \cup \mathcal{ D}_3$. We are now ready to prove Theorem~\ref{th: tong even general} for this case.
 
\emph{Proof of Theorem~\ref{th: tong even general} for $\alpha \in \left(\frac{1}{2},\frac{1}{\lambda}\right)$.}
By definition $\tau_k < \tau_{k+1}$.  In~\cite{BKS} it was shown that $\frac{-1}{\lambda+1}=[\overline{(-1:2),(-1:1)^{p-2}}]$ and we find $\displaystyle{\lim_{k\rightarrow \infty} \tau_k }= \frac{-1}{\lambda+1}$. Recall that $\displaystyle{c_{k}=\frac{-\tau_{k-1}}{1+\tau_{k-1}\nu_{k-1}}}$. It follows that  $c_{k+1} <c_{k} $ and $\displaystyle{\lim_{k\rightarrow\infty} c_k= \frac{1}{2}}$.

Take an integer $k$ such that $k>K$ from Lemma~\ref{lem: d flushen}. Take a point $(t_n,v_n) \in \mathcal{ D}$ that did not get flushed in the first $k-1$ rounds. There exists an index $i$ with $0 \leq i \leq p-2$ such that $(t_{n+i},v_{n+i})$ is either in $\mathcal{A}$ or flushed. We assume $(t_{n+i},v_{n+i})\in \mathcal{A}$, otherwise we are done. From Theorem~\ref{th: flushing} we find that $t_{n+i}\geq \tau_k$. Thus $\Theta_n(t_{n+i},v_{n+i}) \leq \Theta_n(\tau_{k-1},\lambda-1) = c_k.$ 
\hfill $\Box$\medskip\

\subsection{Even case for \texorpdfstring{$\alpha = \frac{1}{\lambda}$}{alpha is 1/lambda}}
\label{sec: even lambda}
The natural extension $\Omega_{1/\lambda}$ can be found from  $\Omega_{1/2}$ by mirroring in the line $v=-t$ if $t\leq 0$, and in the line $v=t$ if $x\geq 0$; see [DKS]. 

In the case $\alpha=\frac12$ we have $l_n =r_n$ for $n \geq 1 $; see Theorem~\ref{th: position l r even}. We put $\varphi_0=l_0 = - \frac{\lambda}{2}$ and denote $\varphi_n = l_n=r_n =T_{1/2}^n\left( \varphi_0\right)$. Put
$$
L_1 = \frac{1}{\lambda + 1}  \quad \textrm{and} \quad L_n =\frac{1}{\lambda - L_{n-1}} \quad \textrm{for} \quad n=2,3,\dots,p-1.\\
$$ 

We know from~\cite{DKS} that
$$
\begin{aligned}
\Omega_{1/2} &= \left( \bigcup_{n=1}^{p-1} [\varphi_{n-1},\varphi_n) \times [0,L_n] \right) \cup \left[ 0, -\varphi_0 \right) \times [0,1],\\
\Omega_{1/\lambda} &= \left( \bigcup_{n=1}^{p-2} [-L_{p-n},-L_{p-n-1})  \times  [0,-\varphi_{p-n-1}] \right)  \cup [-L_1,1) \times  [0,-\varphi_0],
\end{aligned}
$$
see Figure~\ref{im: mirroring} for an example with $q=8$.


\begin{figure}[!ht]
\includegraphics[height=6cm]{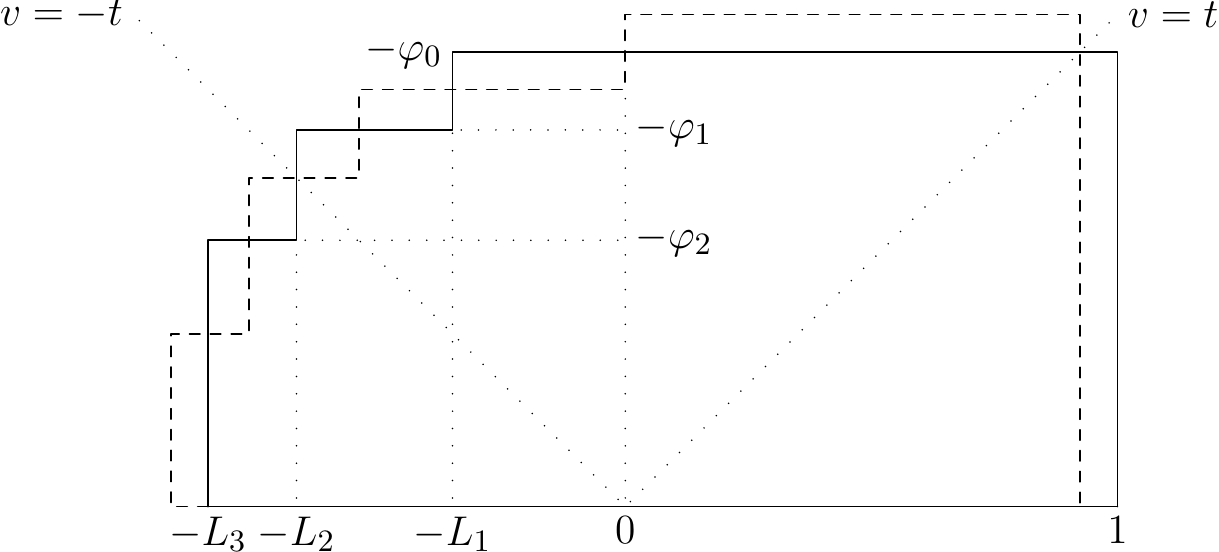}
\caption{We find $\Omega_{\frac{1}{\lambda}}$ by mirroring $\Omega_{\frac12}$  (dashed) in  $v= |t|$. In this example $q=8$.}
\label{im: mirroring}
\end{figure}

We now could proceed as in the previous two subsections; For $t\geq 0$ one easily sees that the graphs of $f$ and $g$ do not meet in $\Omega_{1/\lambda}$ (they meet in $(1,1)$, which is outside $\Omega_{1/\lambda}$). As before, from this it follows that for $t_n\geq 0$ we have that $\min \{ \Theta_{n-1},\Theta_n\} <\frac{1}{2}$. So we only need to focus on the region ${\mathcal D}$, and how it is eventually ``flushed.'' However, we can also derive the result directly from the case $\alpha=\frac12$.

Define the map $M : \Omega_{1/\lambda} \to \Omega_{1/2}$ by 
\begin{equation}
\label{eq: M mirror}
M(t,v)=\begin{cases}
(-v,-t) \qquad & \textrm{if }  t <0,\\
(v,t) \qquad &\textrm{if }  t  \geq 0.\\
\end{cases}
\end{equation}

In~\cite{DKS} it was shown that 
\begin{equation}
\label{eq: isomorphism }
\mathcal{T}_{1/\lambda} (t,v) = M^{-1}\left(\mathcal{T}^{-1}_{1/2}(M(t,v))\right).
\end{equation}

This implies that the dynamical systems $(\Omega_{1/2},\mu_{1/2},{\mathcal T}_{1/2})$ and $(\Omega_{1/\lambda},\mu_{1/\lambda},{\mathcal T}_{1/\lambda})$  are isomorphic. These systems ``behave dynamically in the same way'', also see~\cite{KSS2}. Usually, this is not much of help if we want to obtain Diophantine properties of one system from the other system. But the special form of the isomorphism $\mathcal{M}$ makes it possible to prove directly that these systems posses the same ``Diophantine properties.'' Essentially, if one ``moves forward in time" in $(\Omega_{1/2},\mu_{1/2},{\mathcal T}_{1/2})$, then one ``moves backward in time" in $(\Omega_{1/\lambda},\mu_{1/\lambda},{\mathcal T}_{1/\lambda})$  and vice versa. 

\begin{Theorem}
\label{th: sequence theta dual }
Let $\ell\in\N$, and let $\Theta_{n-1},\Theta_n,\dots,\Theta_{n+\ell}$ be $\ell +2$ consecutive approximation coefficients of the point $(t_n,v_n)\in\Omega_{1/\lambda}$, then there exists a point $({\tilde t}_m,{\tilde v}_m)\in \Omega_{1/2}$ and approximation coefficients ${\tilde \Theta}_m, {\tilde \Theta}_{m-1},\dots,{\tilde \Theta}_{m-\ell -1}$, given by
$$
\begin{aligned}
&{\tilde \Theta}_m=\Theta_m({\tilde t}_m,{\tilde v}_m)=\frac{|{\tilde t}_m|}{1+{\tilde t}_m{\tilde v}_m}, \quad {\tilde \Theta}_{m-1}=\Theta_{m-1}({\tilde t}_m,{\tilde v}_m)=\frac{{\tilde v}_m}{1+{\tilde t}_m{\tilde v}_m}, \dots, \\
& {\tilde \Theta}_{m-\ell -1}=\Theta_{m-\ell -1}({\tilde t}_{m-\ell},{\tilde v}_{m-\ell})=\frac{{\tilde v}_{m-\ell}}{1+{\tilde t}_{m-\ell}{\tilde v}_{m-\ell}},
\end{aligned}
$$
such that
\begin{equation}\label{EqualityOfThetas}
\Theta_{n-1}={\tilde \Theta}_m, \: \Theta_n={\tilde \Theta}_{m-1}, \: \Theta_{n+1}={\tilde \Theta}_{m-2},\:  \dots, \:  \Theta_{n+\ell}={\tilde \Theta}_{m-\ell -1} .
\end{equation}
\end{Theorem}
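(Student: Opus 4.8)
The plan is to transport the entire computation into $\Omega_{1/2}$ through the mirror map $M$ of~(\ref{eq: M mirror}) and to exploit the fact that, by~(\ref{eq: isomorphism }), $M$ conjugates the forward map $\mathcal{T}_{1/\lambda}$ with the \emph{inverse} map $\mathcal{T}^{-1}_{1/2}$. Concretely, I would set $(\tilde t_m,\tilde v_m):=M(t_n,v_n)\in\Omega_{1/2}$ and read off the required coefficients $\tilde\Theta_m,\dots,\tilde\Theta_{m-\ell-1}$ along the backward $\mathcal{T}_{1/2}$-orbit of this point. The reversal of time built into~(\ref{eq: isomorphism }) is precisely what forces the indices on the two sides of~(\ref{EqualityOfThetas}) to run in opposite directions.

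First I would rewrite~(\ref{eq: isomorphism }) in the form $M\circ\mathcal{T}_{1/\lambda}=\mathcal{T}^{-1}_{1/2}\circ M$. Applying this repeatedly, and using that $\mathcal{T}_{1/2}$ is bijective a.e.\ on $\Omega_{1/2}$ so that backward iterates are well defined, one obtains by induction on $j$ that
\[
(\tilde t_{m-j},\tilde v_{m-j}):=\mathcal{T}^{-j}_{1/2}(\tilde t_m,\tilde v_m)=M(t_{n+j},v_{n+j}),\qquad j=0,1,\dots,\ell .
\]
Thus the finite window $(t_n,v_n),\dots,(t_{n+\ell},v_{n+\ell})$ of the forward $\mathcal{T}_{1/\lambda}$-orbit is carried, point by point, onto the window $(\tilde t_m,\tilde v_m),\dots,(\tilde t_{m-\ell},\tilde v_{m-\ell})$ of a backward $\mathcal{T}_{1/2}$-orbit, and the $\tilde\Theta$'s are the values that the standard formulas~(\ref{eq: formulae for theta}) attach to these points, exactly as written in the statement.

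The verification of~(\ref{EqualityOfThetas}) then reduces to a two-line case check on the sign of the first coordinate, using the explicit form of $M$. I would first record that $\Theta_{n+j}=|t_{n+j}|/(1+t_{n+j}v_{n+j})$, which holds because $\varepsilon_{n+j+1}=\mathrm{sgn}(t_{n+j})$ and hence $\varepsilon_{n+j+1}t_{n+j}=|t_{n+j}|$, and that $v_k\ge 0$ throughout. Substituting $(\tilde t_{m-j},\tilde v_{m-j})=M(t_{n+j},v_{n+j})$ into $\tilde\Theta_{m-j-1}=\tilde v_{m-j}/(1+\tilde t_{m-j}\tilde v_{m-j})$: when $t_{n+j}<0$ one has $(\tilde t_{m-j},\tilde v_{m-j})=(-v_{n+j},-t_{n+j})$, and when $t_{n+j}\ge 0$ one has $(v_{n+j},t_{n+j})$; in both cases the denominator equals $1+t_{n+j}v_{n+j}$ and the numerator equals $|t_{n+j}|$, giving $\Theta_{n+j}=\tilde\Theta_{m-j-1}$ for $j=0,\dots,\ell$. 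The remaining identity $\Theta_{n-1}=\tilde\Theta_m$ follows identically from $j=0$, now comparing $v_n/(1+t_nv_n)$ with $|\tilde t_m|/(1+\tilde t_m\tilde v_m)$. The conceptual point—and the only thing needing care—is that mirroring interchanges the roles of the ``future'' and ``past'' coordinates, so that a future-type coefficient $\Theta_{n+j}$ on one side is matched by a past-type coefficient $\tilde\Theta_{m-j-1}$ on the other; this swap, combined with the time reversal from~(\ref{eq: isomorphism }), is what produces the descending indices $m,m-1,\dots,m-\ell-1$.

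The one genuine subtlety I anticipate is bookkeeping rather than analysis: ensuring that the $\tilde\Theta_{m-i}$ are legitimately indexed approximation coefficients of a $\tfrac12$-Rosen orbit. Since every point of $\Omega_{1/2}$ lies on a bi-infinite $\mathcal{T}_{1/2}$-orbit and~(\ref{eq: formulae for theta}) defines the approximation coefficients directly from the orbit coordinates, it suffices to choose $m$ large enough that the indices $m,\dots,m-\ell-1$ are all nonnegative; no further input is required, and the theorem follows.
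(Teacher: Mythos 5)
Your proposal is correct and follows essentially the same route as the paper: transport $(t_n,v_n)$ to $\Omega_{1/2}$ via $M$, use the conjugacy $M\circ\mathcal{T}_{1/\lambda}=\mathcal{T}_{1/2}^{-1}\circ M$ from~(\ref{eq: isomorphism }) to identify the forward $\mathcal{T}_{1/\lambda}$-window with a backward $\mathcal{T}_{1/2}$-window, and verify the coefficient identities by the two-case sign check on the first coordinate. Your explicit induction $(\tilde t_{m-j},\tilde v_{m-j})=\mathcal{T}_{1/2}^{-j}(\tilde t_m,\tilde v_m)=M(t_{n+j},v_{n+j})$ is in fact a slightly cleaner formulation of the step the paper carries out for $j=1$ and then extends ``by induction.''
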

\begin{proof}
Let $({\tilde t}_m,{\tilde v}_m)=M(t_n,v_n)\in \Omega_{1/2}$ be the point in $\Omega_{1/2}$ corresponding to $(t_n,v_n)\in\Omega_{1/\lambda}$ under the isomorphism $M$~from~(\ref{eq: M mirror}). Then
$$
({\tilde t}_m,{\tilde v}_m)= \left\{
\begin{array}{ll}
(-v_n,-t_n) &\textrm{if } t_n<0,\\
 & \\
(v_n,t_n)&\textrm{if } t_n\geq 0,
\end{array}\right.
$$
and we find that
$$
{\tilde \Theta}_{m-1}=\Theta_{m-1}({\tilde t}_m,{\tilde
v}_m)=\frac{{\tilde v}_m}{1+{\tilde t}_m{\tilde v}_m}
= \frac{|t_n|}{1+t_nv_n} = \Theta_n.
$$
Similarly,
$$
{\tilde \Theta}_{m}=\Theta_{m}({\tilde t}_m,{\tilde
v}_m)=\frac{|{\tilde t}_m|}{1+{\tilde t}_m{\tilde v}_m}
= \frac{v_n}{1+t_nv_n} = \Theta_{n-1}.
$$
So we have $(\Theta_{n-1},\Theta_n)=({\tilde \Theta}_{m},{\tilde
\Theta}_{m-1})$.\medskip\

Furthermore, by~(\ref{eq: isomorphism }) we have that
\begin{eqnarray*}
(t_{n+1},v_{n+1}) &=& {\mathcal T}_{1/\lambda}(t_n,v_n) = M^{-1}\left( {\mathcal T}_{1/2}(M(t_n,v_n))\right) \\
&= &M^{-1}\left( {\mathcal T}_{1/2}({\tilde t}_m,{\tilde
v}_m)\right) 
= M^{-1}({\tilde t}_{m-1},{\tilde v}_{m-1}),
\end{eqnarray*}
and we see that
$$
({\tilde t}_{m-1},{\tilde v}_{m-1}) = M\left( t_{n+1},v_{n+1}\right)
= \left\{ \begin{array}{ll}
(-v_{n+1},-t_{n+1}) &\textrm{if } t_{n+1}<0\\
 & \\
(v_{n+1},t_{n+1}) &\textrm{if } t_{n+1}\geq 0.
\end{array}\right.
$$
But then we have that
$$
{\tilde \Theta}_{m-2}=\frac{{\tilde v}_{m-1}}{1+{\tilde t}_{m-1}{\tilde v}_{m-1}} 
= \frac{|t_{n+1}|}{1+t_{n+1}v_{n+1}} = \Theta_{n+1},
$$
and by induction it follows that
$$
\Theta_{n-1}={\tilde \Theta}_m, \: \Theta_n={\tilde \Theta}_{m-1},\:  \Theta_{n+1}={\tilde \Theta}_{m-2},\: \dots, \: \Theta_{n+\ell}={\tilde \Theta}_{m-\ell -1}.
$$
\end{proof}

\begin{Lemma}
\label{lem: t_n from theta_n}
Let $\Theta_{n-1}$ and $\Theta_n$ be two consecutive approximation coefficients of the point $(t_n,v_n)$. Then
$$
t_n=\frac{1+ \varepsilon_{n+1} \sqrt{1-4\varepsilon_{n+1} \Theta_{n-1}\Theta_n}}{2\Theta_{n-1}}\quad
{\mbox{and}}\quad
v_n=\frac{\varepsilon_{n+1}+\sqrt{1-4\varepsilon_{n+1}  \Theta_{n-1}\Theta_n}}{2\Theta_{n}}.
$$
\end{Lemma}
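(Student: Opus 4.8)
The plan is to simply invert the two defining relations in~(\ref{eq: formulae for theta}), treating $\Theta_{n-1}$ and $\Theta_n$ as known and $(t_n,v_n)$ as the unknowns. The clean way to do this is to introduce the single auxiliary quantity $\Delta := 1+t_nv_n$, which is exactly the common denominator appearing in both formulas. From~(\ref{eq: formulae for theta}) I then read off $v_n=\Theta_{n-1}\Delta$ and, using $\varepsilon_{n+1}^2=1$ together with $\varepsilon_{n+1}=\operatorname{sgn}(t_n)$, also $t_n=\varepsilon_{n+1}\Theta_n\Delta$. Substituting these into the identity $t_nv_n=\Delta-1$ eliminates both coordinates and leaves a single scalar equation in $\Delta$, namely the quadratic $\varepsilon_{n+1}\Theta_{n-1}\Theta_n\,\Delta^2-\Delta+1=0$.

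Solving this quadratic gives $\Delta=\dfrac{1\pm\sqrt{1-4\varepsilon_{n+1}\Theta_{n-1}\Theta_n}}{2\varepsilon_{n+1}\Theta_{n-1}\Theta_n}$, and back-substitution into $v_n=\Theta_{n-1}\Delta$ and $t_n=\varepsilon_{n+1}\Theta_n\Delta$ produces the claimed expressions once one simplifies the prefactors via $\Theta_{n-1}/(\varepsilon_{n+1}\Theta_{n-1}\Theta_n)=\varepsilon_{n+1}/\Theta_n$ and $\varepsilon_{n+1}\Theta_n/(\varepsilon_{n+1}\Theta_{n-1}\Theta_n)=1/\Theta_{n-1}$, choosing the sign $\pm=\varepsilon_{n+1}$ in front of the radical. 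I would first record that the discriminant $1-4\varepsilon_{n+1}\Theta_{n-1}\Theta_n$ is genuinely nonnegative so the formulas make sense: when $\varepsilon_{n+1}=-1$ it exceeds $1$ automatically, and when $\varepsilon_{n+1}=+1$ one has $\Theta_{n-1}\Theta_n=t_nv_n/\Delta^2=u/(1+u)^2\le\tfrac14$ for $u=t_nv_n\ge0$, so the radical is real in both cases.

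The step I expect to be the real obstacle is the root selection: algebraically both solutions of the quadratic satisfy the eliminated system, so one must invoke the geometry of $\Omega_\alpha$ to discard the extraneous one. The decisive input is that any admissible $(t_n,v_n)\in\Omega_\alpha$ has $|t_n|<1$ and $0\le v_n<1$, whence $\Delta=1+t_nv_n\in(0,2)$; the two roots of the quadratic have product $1/(\varepsilon_{n+1}\Theta_{n-1}\Theta_n)$, so only one of them can lie in this bounded range, and that is the branch picked out by $\pm=\varepsilon_{n+1}$. This selection is most transparent in the case $t_n<0$, i.e.\ $\varepsilon_{n+1}=-1$, which is precisely the situation relevant to points of $\mathcal D$; there the sign constraint $\Delta\le1$ (since $t_nv_n\le0$) immediately forces the correct root. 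I would therefore carry out the argument by fixing the sign of $t_n$ first, reading off $\varepsilon_{n+1}=\operatorname{sgn}(t_n)$, and then using the corresponding bound on $\Delta$ to isolate the admissible solution before simplifying to the stated closed forms.
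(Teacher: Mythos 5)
Your route is essentially the paper's: the paper also inverts~(\ref{eq: formulae for theta}) by eliminating one unknown (it writes $v_n=\varepsilon_{n+1}\Theta_{n-1}t_n/\Theta_n$ and solves the resulting quadratic $\varepsilon_{n+1}\Theta_{n-1}t_n^2-\varepsilon_{n+1}t_n+\Theta_n=0$ for $t_n$), whereas you make the quadratic in $\Delta=1+t_nv_n$; the two computations are equivalent. What you add, and the paper omits entirely, is the discriminant check and the root-selection argument, and that is where the one real problem sits. Your claim that the bounded root is always ``the branch picked out by $\pm=\varepsilon_{n+1}$'' is false when $\varepsilon_{n+1}=+1$. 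There, writing $u=t_nv_n\in[0,1)$, one has $\Theta_{n-1}\Theta_n=u/(1+u)^2$ and the two roots of your quadratic are $\Delta=1+u$ and $\Delta=(1+u)/u\geq 2$; the admissible root $\Delta=1+u$ is the \emph{smaller} one, i.e.\ the $-$ branch, and back-substitution gives
$$
t_n=\frac{1-\sqrt{1-4\Theta_{n-1}\Theta_n}}{2\Theta_{n-1}},\qquad v_n=\frac{1-\sqrt{1-4\Theta_{n-1}\Theta_n}}{2\Theta_n},
$$
not the $+$ signs of the statement (the stated formula returns $1/v_n$ instead of $t_n$; try $t_n=v_n=\tfrac12$). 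So your own bounded-root criterion, applied honestly, contradicts the sign you assert for positive $t_n$. In fact this exposes a sign slip in the Lemma as printed: the displayed formulas are correct only for $\varepsilon_{n+1}=-1$, which is the only case the paper ever uses (the Lemma is invoked for points of $\mathcal D$, where $t_n<0$), and the paper's own proof silently sidesteps the issue by never saying which root it takes. Your treatment of the $\varepsilon_{n+1}=-1$ case — product of roots negative, $\Delta\in(0,1]$, hence the $-$ branch — is correct and complete, and your discriminant bounds in both cases are right. To make the write-up sound you should either restrict the Lemma to $t_n<0$ or flip the signs in the $\varepsilon_{n+1}=+1$ case; as it stands, the last paragraph proves something different from what the statement asserts for positive $t_n$.
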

\begin{proof}
From~(\ref{eq: formulae for theta}) we have $\Theta_{n-1} = \displaystyle{\frac{v_n}{1+t_nv_n}}$ and $\displaystyle{\Theta_n=\frac{\varepsilon_{n+1}t_n}{1+t_nv_n}}$. It follows that 
\begin{equation}
\label{eq: vn in tn and theta}
v_n = \frac{\varepsilon_{n+1}  \Theta_{n-1}\, t_n}{\Theta_n},
\end{equation}
and substituting~(\ref{eq: vn in tn and theta}) in the formula for $\Theta_n$ yields 
$$
\varepsilon_{n+1} \Theta_{n-1} t_n^2 -\varepsilon_{n+1} t_n +\Theta_n =0,
$$
from which we find $t_n$. Substituting $t_n$ in~(\ref{eq: vn in tn and theta}) yields $v_n$. 
\end{proof}

\emph{Proof of Theorem~\ref{th: tong even general} for $\alpha = 1/\lambda$.}
Let $x$ be a $G_q$-irrational with $1/\lambda$-expansion $[\,\varepsilon_1:d_1,\varepsilon_2:d_2,\varepsilon_3:d_3,\dots \,] $ and let  $n\geq 1$ be an integer. Assume there is a $k\in\N$ is such, that
$$
\min \{ \Theta_{n-1},\Theta_n,\dots,\Theta_{n+k(p-1)} \} >
\frac{1}{2} ,
$$
otherwise we are done. From Lemma~\ref{lem: t_n from theta_n} we find the appropriate $(t_n,v_n) \in \Omega_{1/\lambda}$ for this sequence of approximation coefficients. From Theorem~\ref{th: sequence theta dual } if follows that we can find  $(\tilde{t}_m,\tilde{v}_m)\in \Omega_{1/2}$ such that 
$$
\Theta_{n-1}={\tilde \Theta}_m, \Theta_n={\tilde \Theta}_{m-1}, \dots, \Theta_{n+k(p-1)}={\tilde
\Theta}_{m-k(p-1)-1} .
$$
It follows from Theorem~\ref{th: tong even} for $\alpha=\frac12$ that
$$
\min \{ \Theta_{n-1},\Theta_n,\dots,\Theta_{n+k(p-1)} \} = \min \{ {\tilde
\Theta}_{m-k(p-1)-1},\dots,  {\tilde \Theta}_{m-1},{\tilde \Theta}_m, \} < c_k,
$$
where $c_k$ is defined in (\ref{def hat t and v even}) and Theorem~\ref{th: tong even general}. This proves Theorem~\ref{th: tong even general} for the case $\alpha=1/\lambda$. 
\hfill $\Box$\medskip\

\section{Tong's spectrum for odd \texorpdfstring{$\alpha$}{alpha}-Rosen fractions}
\label{sec:odd}
Let $q=2h+3$ for $h\geq 1$ and define
\begin{equation}
\label{def: rho}
\rho =\frac{\lambda -2 +\sqrt{\lambda^2-4\lambda +8}}{2}.
\end{equation}
We often use the following relations for $\rho$
$$
\rho^2 +(2-\lambda)\rho-1 =0 \quad
\textrm{and} \quad\frac{\rho}{\rho^2+1}  = \mathcal{H}_q = \frac{1}{\sqrt{\lambda^2-4\lambda+8}}.
$$

\begin{Rmk}
For $q=3$ (i.e.~$\lambda=1$) we are in the ``classical" case of Nakada's $\alpha$-expansions~\cite{Nakada}. In this case $\frac{\rho}{\lambda}=\frac{\sqrt{5}-1}{2}= g$ and $\mathcal{H}_q = \frac{1}{\sqrt{5}}$; see also~\cite{HK} for a discussion of this case.

\end{Rmk}



For a fixed $\lambda$ we define the following constants 
$$
\begin{aligned}
\alpha_1 & = \frac{(\lambda-2)\mathcal{H}_q+1}{\lambda},\\
\alpha_2 &= \frac{-\lambda+\sqrt{5\lambda^2-4\lambda+4}}{2\lambda},\\
\alpha_3 &= \frac{(2-\lambda)^2\mathcal{H}_q+2\lambda}{4\lambda},\\
\alpha_4& =\frac{(2-\lambda)\mathcal{H}_q-2}{(\lambda\mathcal{H}_q-2\mathcal{H}_q-2)\lambda}.
\end{aligned}
$$
For all admissible $\lambda$ we have
\begin{equation}
\label{eq: order odd alpha}
\frac12 < \alpha_1 <\alpha_2 <\alpha_3 < \frac{\rho}{\lambda} <\alpha_4 <\frac{1}{\lambda}.
\end{equation}

\begin{Rmk}
These numbers are very near to each other. For example, if $q=9$,
$$
\begin{aligned}
\alpha_1 &\approx 0.500058, \quad &\alpha_2 \approx 0.500515, \quad &\alpha_3 \approx 0.500966 \\
 \frac{\rho}{\lambda} &\approx 0.500967, & \alpha_4 \approx 0.500994,\quad & \frac{1}{\lambda}\approx 0.532089.
\end{aligned}
$$

\end{Rmk}

In~\cite{DKS} it was shown that there are four subcases for the natural extension of odd $\alpha$-Rosen fractions: $\alpha=\frac12, \alpha \in \left( \frac12, \frac{\rho}{\lambda}\right),\alpha =\frac{\rho}{\lambda}$ and $\alpha \in \left(  \frac{\rho}{\lambda}, \frac{1}{\lambda}\right] $. Again, the case $\alpha=\frac12$ had been dealt with in~\cite{KSS} and we give the details for the other cases in this section. 

The following theorem from~\cite{DKS} is the counterpart for the odd case of Theorem~\ref{th: position l r even} about the ordering of the $l_n$ and $r_n$.
\begin{Theorem}
\label{th: position l r odd}
Let $q=2h+3, h \in N, h\geq 1$. We have the following cases.
\begin{itemize}
\item[$ \alpha=\frac12:\quad$] $l_0 <r_{h+1} =l_{h+1} <r_1=l_1 <\ldots< r_{h+n} = l_{h+n} < r_n = l_n < \ldots <r_{2h-1}=l_{2h-1}<r_{h-1}=l_{h-1}<r_{2h} =l_{2h}<-\delta_1 <r_h =l_h <-\delta_2 <r_{2h+1}=l_{2h+1}=0<r_0.$\\

\item[$ \frac12 < \alpha < \frac{\rho}{\lambda}:\quad$] $l_0 <r_{h+1} <l_{h+1} <r_1 < l_1  <\ldots< r_{h+n} < l_{h+n} < r_n < l_n < \ldots<r_{2h-1}<l_{2h-1}<r_{h-1} <l_{h-1}<r_{2h}<l_{2h}<-\delta_1 <r_h <l_h < r_{2h+1}<0<l_{2h+1}<r_0.$ \\
Furthermore, we have $l_h <-\delta_2$,  $l_{2h+2}=r_{2h+2}$ and $d_{2h+2}(r_0)=d_{2h+2}(l_0)+1$.\\

\item[$\alpha=\frac{\rho}{\lambda}: \quad$] $l_0 = r_{h+1} <l_{h+1} = r_1 < \ldots < l_{n-1} = r_{h+n} < l_{h+n}=r_n <\ldots < l_{h-1}  = r_{2h}< l_{2h}= r_h = -\delta_1 <l_h = r_{2h+1}< -\delta_2 <0 <r_0$.\\

\item[$\frac{\rho}{\lambda}< \alpha < \frac{1}{\lambda}:\quad$] $l_0<r_1< l_1< r_2< \ldots <l_{h-1} < r_h <-\delta_1 <l_h <0 < r_{h+1}  <r_0.$\\
Furthermore, we have $l_{h+1} = r_{h+2}$ and $d_{h+1}(l_0)=d_{h+2}(r_0)+1$.\\
\item[$\alpha = \frac{1}{\lambda}:\quad$] $l_0 =r_1< l_1= r_2 <\ldots <l_{h-1} = r_h <-\delta_1 <l_h  =0 = r_{h+1} <r_0$. 
\end{itemize}
\end{Theorem}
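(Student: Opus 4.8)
The plan is to prove all five orderings simultaneously by following the forward orbits of the two endpoints $l_0=(\alpha-1)\lambda$ and $r_0=\alpha\lambda$ under $T_\alpha$, exploiting that each branch of $T_\alpha$ is a monotone M\"obius map. By~(\ref{def: chap 3 Talpha}), on the cylinder where $d(t)=d$ and $\varepsilon(t)=\varepsilon$ one has $T_\alpha(t)=\varepsilon/t-d\lambda$; this is \emph{increasing} on the negative branches ($\varepsilon=-1$), where essentially all the relevant orbit points lie, and decreasing on the positive branches. Hence whenever two points share a cylinder their mutual order is preserved (resp.\ reversed) by $T_\alpha$. The induction therefore carries two pieces of data at each step: the position of $l_n,r_n$ relative to the partition points $\pm\delta_d$, which fixes the next digit $(\varepsilon_{n+1},d_{n+1})$, and the mutual order of $l_n$ and $r_n$.

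First I would pin down the digit sequences of the two endpoints. Using Lemma~\ref{lem: st explicit} to compute the powers $(S^{-1}T)^n$ that encode blocks of $(-1:1)$-digits, together with the odd-case relations $B_{h+1}=B_{h+2}$, $B_h=(\lambda-1)B_{h+1}$, $B_{h-1}=(\lambda^2-\lambda-1)B_{h+1}$ from~(\ref{eq: B_n odd relations}) and the quadratic $\rho^2+(2-\lambda)\rho-1=0$ from~(\ref{def: rho}), one evaluates $T_\alpha^n(r_0)$ and $T_\alpha^n(l_0)$ in closed form. The case split is dictated entirely by the floor in $d(t)=\lfloor \varepsilon/(\lambda t)+1-\alpha\rfloor$: at the relevant orbit points this floor jumps by one exactly as $\alpha$ crosses $\rho/\lambda$ (and degenerates at $\frac12$ and $1/\lambda$), which is why the four generic orderings differ in character.

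The core is a fusion lemma, the odd analogue of the even-case identity $l_p=r_p$ in Theorem~\ref{th: position l r even}. For $\alpha\in(\frac12,\rho/\lambda)$ I would show $l_{2h+2}=r_{2h+2}$ by multiplying out the two products of branch matrices reaching these points and collapsing them with the $B_n$-identities; the two products coincide precisely because $B_{h+1}=B_{h+2}$, and the accompanying fact $d_{2h+2}(r_0)=d_{2h+2}(l_0)+1$ records that the two orbits arrive with digits differing by one. For $\alpha\in(\rho/\lambda,1/\lambda)$ the same computation instead yields the shifted fusion $l_{h+1}=r_{h+2}$. From such a single coincidence the whole interleaved chain $\cdots<r_{h+n}<l_{h+n}<r_n<l_n<\cdots$ unwinds: running the monotone increasing negative branches \emph{backwards} from the fused index, and checking at each crossing of a partition point $-\delta_d$ which branch applies, reconstructs the alternating pattern, the $h$-shift between $r_{h+n}$ and $r_n$ reflecting the repeated $(-1:1)$-block of length $h$ in the endpoint expansions.

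Finally, the three boundary values are limiting degenerations. At $\alpha=\frac12$ the map is \emph{even}, $T_{1/2}(-t)=T_{1/2}(t)$, so $l_1=r_1$ and hence $l_n=r_n$ for $n\ge1$ (this is the case of~\cite{KSS}); at $\alpha=1/\lambda$ one has $r_0=1$ with $T_\alpha(1)=1-\lambda=l_0$, giving $r_1=l_0$ and thus $l_n=r_{n+1}$ directly; and at the critical $\alpha=\rho/\lambda$ the orbit of $r_0$ lands \emph{exactly} on a partition point (the floor attains its value with no slack), which forces $l_0=r_{h+1}$ and the uniform shift $l_{n-1}=r_{h+n}$. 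The main obstacle throughout is the floor bookkeeping: one must verify that, uniformly in $\alpha$ across each subinterval, the endpoint digits are exactly the claimed words of $(-1:1)$'s and $(-1:2)$'s, and it is precisely the transitions of these floor values at $\frac12$, $\rho/\lambda$ and $1/\lambda$ that separate the five orderings and must be established rigorously.
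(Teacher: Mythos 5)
The paper does not actually prove this theorem: it is imported verbatim from~\cite{DKS} (the text introduces it as ``the following theorem from~\cite{DKS}''), and the only original contribution here is the Remark immediately after it, correcting the position of $r_{2h+1}$ relative to $-\delta_2$ in the case $\frac12<\alpha<\frac{\rho}{\lambda}$. So there is no in-paper proof to compare against; your proposal has to stand on its own. As a strategy it is sound and consistent with the machinery the paper does use elsewhere (the matrices of Lemma~\ref{lem: st explicit}, the relations~(\ref{eq: B_n odd relations}), the fusion identity $l_p=r_p$ with $d_p(r_0)=d_p(l_0)+1$ in the even-case Theorem~\ref{th: position l r even}, of which $l_{2h+2}=r_{2h+2}$ and $l_{h+1}=r_{h+2}$ are the right odd analogues). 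Your boundary-case observations are also correct: $T_\alpha(t)$ depends only on $|t|$, so $l_0=-r_0$ at $\alpha=\frac12$ forces $l_1=r_1$; and at $\alpha=\frac1\lambda$ one has $r_0=1$, $d(1)=1$, hence $r_1=1-\lambda=l_0$ and the shift $l_n=r_{n+1}$.

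The gap is that the decisive content is described rather than carried out, and it sits exactly where this theorem is delicate. First, the claim that the endpoint digit words are ``the claimed words of $(-1{:}1)$'s and $(-1{:}2)$'s uniformly in $\alpha$ across each subinterval'' is the whole theorem: each inequality in the chains is equivalent to a floor value, and these can change at thresholds strictly inside $\left(\frac12,\frac{\rho}{\lambda}\right)$. The paper's own Remark is a cautionary example: whether $r_{2h+1}$ lies left or right of $-\delta_2$ depends on a further cutoff $\alpha_2$, which is precisely why the theorem's chain for $\frac12<\alpha<\frac{\rho}{\lambda}$ omits $-\delta_2$ and only asserts $l_h<-\delta_2$ separately. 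Your outline never confronts such internal thresholds, so it cannot certify that the stated ordering (as opposed to some refinement of it) is what actually holds. Second, ``unwinding'' the interleaved chain $r_{h+n}<l_{h+n}<r_n<l_n$ from a single fusion identity by running the increasing negative branches backwards needs more than monotonicity: at each step you must know that all four points of the current quadruple lie in one cylinder $[-\delta_{d-1},-\delta_d)$ (otherwise order is not transported), and you need a verified base case for the induction; neither is supplied. Until the closed-form orbit computations and the cylinder-membership checks are actually performed, this is a plausible plan, not a proof.
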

\begin{Rmk}
In an preliminary version of~\cite{DKS} there was a small error in the above theorem in the case $ \frac12 < \alpha < \frac{\rho}{\lambda}$: it stated that $-\delta_2 <r_{2h+1}$. But this is only true if $\alpha <\alpha_2$. For all $ \frac12 < \alpha < \frac{\rho}{\lambda}$ one has $r_{2h+1} = -\frac{(2\alpha-1)\lambda}{\alpha\lambda^2-2\lambda+2}$, so 
$$
\begin{aligned}
r_{2h+1} \geq -\delta_2 &\Leftrightarrow -\frac{(2\alpha-1)\lambda }{\alpha\lambda^2-2\lambda+2} \geq \frac{-1}{(\alpha+2)\lambda}\\
&\Leftrightarrow \frac{-\lambda-\sqrt{5\lambda^2-4\lambda+4}}{2\lambda }< \alpha \leq \frac{-\lambda+\sqrt{5\lambda^2-4\lambda+4}}{2\lambda }=\alpha_2.
\end{aligned}
$$
We conclude that $r_{2h+1} \geq -\delta_2$ if $\alpha \in \left(\frac12,a_2\right)$ and that  $r_{2h+1} < -\delta_2$ if $\alpha \in \left[a_2,\frac{\rho}{\lambda}\right)$.
\phantom{k}
\end{Rmk}

In this section we prove the following result. 
\begin{Theorem}
\label{th: 3 tong odd general} 
Fix an odd $q = 2h+3$, with $h \ge 1$. 

(i) Let $\alpha \in \left[ \frac12, \frac{\rho}{\lambda} \right]$. Then there exists a positive integer $K$ such that for
every $G_q$-irrational number $x$ and all positive $n$ and $k >K$,
$$
\min \{ \Theta_{n-1},\Theta_n,\ldots,\Theta_{n+k(2h+1)}\} < c_{k}, 
$$
for certain constants $c_k$ with $c_{k+1} < c_{k}$ and $\displaystyle{\lim_{k \rightarrow \infty} c_{k}} = \mathcal{H}_q$.

(ii) Let $\alpha \in \left( \frac{\rho}{\lambda},\frac{1}{\lambda}\right]$.  For
every $G_q$-irrational number $x$ and all positive $n$,  one has
$$
\min \{ \Theta_{n-1},\Theta_n,\ldots,\Theta_{n+(3h+2)}\} < \mathcal{H}_q.  
$$
\end{Theorem}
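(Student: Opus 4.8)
The plan is to run the geometric flushing argument of Section~\ref{sec:even} in the odd setting, now with Hurwitz constant $\mathcal{H}_q=\frac{1}{\sqrt{\lambda^2-4\lambda+8}}$ and with $\rho$ from~\eqref{def: rho} playing the role that $\lambda-1$ played before. The case $\alpha=\frac12$ is already in~\cite{KSS}, so I would treat $\alpha\in(\frac12,\frac{\rho}{\lambda}]$ for~(i) and $\alpha\in(\frac{\rho}{\lambda},\frac1\lambda]$ for~(ii). In each subcase the first task is the shape of the danger region $\mathcal{D}$ from~\eqref{def:D}: by~\eqref{eq: relations f g theta} the graphs of $f$ and $g$ never meet for $t\geq0$ inside $\Omega_\alpha$, so $\min\{\Theta_{n-1},\Theta_n\}<\mathcal{H}_q$ automatically there and everything happens on $t<0$. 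Using the ordering of Theorem~\ref{th: position l r odd}, I would track where $f$ and $g$ cross the successive height levels $H_i$ to split $\mathcal{D}$ into finitely many components; the four thresholds $\alpha_1,\dots,\alpha_4$ of~\eqref{eq: order odd alpha} are precisely the values of $\alpha$ at which a new component is born or an existing one splits, so this is the odd analogue of Lemma~\ref{th: shape D}.

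For part~(i) the engine is a fixed point of the round map $\mathcal{T}_\alpha^{2h+1}$. As in Lemmas~\ref{lem: mobius y from x}--\ref{lem: M T  explicit} I would write $\mathcal{T}_\alpha^{2h+1}$ as a single M\"obius transformation: reading the digits $(\varepsilon,d)$ of the $2h+1$ steps of a round off Theorem~\ref{th: position l r odd} shows the round matrix is a fixed product of $S^{-1}T$ and $S^{-2}T$, which I would evaluate with the odd-case relations~\eqref{eq: B_n odd relations} for $B_{h-1},B_h,B_{h+1}$. I expect the first-coordinate fixed point to be the purely periodic point governed by $\rho$, repelling in the $t$-direction and attracting in the $v$-direction, just as $(\frac{-1}{\lambda+1},\lambda-1)$ was in the even case; the identity $\frac{\rho}{\rho^2+1}=\mathcal{H}_q$ is what will force this fixed point to sit exactly on the level curve $\Theta=\mathcal{H}_q$. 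Defining $(\tau_k,\nu_k)$ as the $\mathcal{T}_\alpha^{-k(2h+1)}$-preimage of the appropriate boundary point and letting $c_k$ be the value, at the critical vertex $(\tau_{k-1},\nu_{k-1})$, of whichever of $\Theta_{n-1},\Theta_n$ realises the minimum on the component (the monotonicity of the $\Theta$'s on the component is checked from~\eqref{eq: formulae for theta} and~\eqref{eq: formula Theta_n+1} exactly as in Lemma~\ref{lem: theta on d q4}), I would prove the odd analogue of Theorem~\ref{th: flushing}: a point is flushed after exactly $k$ rounds iff $\tau_{k-1}\leq t<\tau_k$, whence $\min\{\Theta_{n-1},\dots,\Theta_{n+k(2h+1)}\}<c_k$. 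Monotonicity $c_{k+1}<c_k$ then follows because $\tau_k$ increases monotonically to the fixed point, and $\lim_k c_k=\mathcal{H}_q$ because the fixed point lies on $\Theta=\mathcal{H}_q$.

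The constant $K$ enters exactly as before: besides the component steered by the repelling fixed point, $\mathcal{D}$ has extra components (the odd counterparts of $\mathcal{D}_3$, present for the larger $\alpha$ in the range) on which the second-coordinate map is a strict contraction towards the fixed $v$-value. A computation parallel to Lemma~\ref{lem: d flushen} bounds the contraction factor away from $1$, so these components are emptied after finitely many rounds; taking $K$ to be that number turns the flushing statement into an equivalence for $k>K$, and the bounded entry time from the Region~(I) analogue (at most a few steps to reach the core region or be flushed, by Theorem~\ref{th: position l r odd}) finishes~(i) for $\alpha\in(\frac12,\frac{\rho}{\lambda})$. The endpoint $\alpha=\frac{\rho}{\lambda}$ I would obtain by the mirror trick of Section~\ref{sec: even lambda}: a map like $M$ in~\eqref{eq: M mirror} conjugates $\mathcal{T}_{\rho/\lambda}$ to the time-reversal of a smaller-$\alpha$ system, and Theorem~\ref{th: sequence theta dual } transports the spectrum with the order of the $\Theta$'s reversed.

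Part~(ii) is where the behaviour genuinely changes: for $\alpha>\frac{\rho}{\lambda}$ the ordering in Theorem~\ref{th: position l r odd} is qualitatively different (note the digit relation $d_{h+1}(l_0)=d_{h+2}(r_0)+1$), and the fixed point governed by $\rho$ no longer lies inside $\mathcal{D}$. Hence $\mathcal{D}$ carries no invariant point and is entirely transient, so instead of a spectrum one gets a single uniform exit time. I would show that the worst orbit needs at most $h+1$ steps to reach the core danger region and is then flushed within the remaining $2h+1$ steps of one round, for a total window of $3h+2$, which yields $\min\{\Theta_{n-1},\dots,\Theta_{n+3h+2}\}<\mathcal{H}_q$ with no $k$-dependence. \emph{The main obstacle} I anticipate is the shape analysis of $\mathcal{D}$: the thresholds $\alpha_1,\dots,\alpha_4$ are extremely close together (as the $q=9$ numbers show), each triggering a birth or split of a component, so deciding which height levels $f$ and $g$ cross---uniformly in $\lambda$---requires delicate inequalities; together with the explicit evaluation of the round matrix and its fixed point via~\eqref{eq: B_n odd relations}, this is where the real work lies.
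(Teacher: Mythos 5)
Your plan for part~(i) is essentially the paper's proof: the shape analysis of $\mathcal{D}$ via the thresholds $\alpha_1,\alpha_3$ (Lemma~\ref{lem: shape d odd interval}), the round matrix $(S^{-1}T)^{h}S^{-2}T(S^{-1}T)^{h-1}S^{-2}T$ evaluated with~(\ref{eq: B_n odd relations}), the fixed point $\left(\frac{-\rho}{1+\lambda\rho},\lambda-\rho\right)$ of $\mathcal{T}_\alpha^{2h+1}$ sitting on the level curve $\Theta=\mathcal{H}_q$, the definition $(\tau_k,\nu_k)=\mathcal{T}_\alpha^{-k(2h+1)}(l_h,\lambda-\rho)$, and the mirror trick for the endpoint $\alpha=\rho/\lambda$ (where the paper conjugates specifically to the $\alpha=\frac12$ system) all match.

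There is, however, a genuine gap in part~(ii), and it traces back to your opening claim that ``the graphs of $f$ and $g$ never meet for $t\geq0$ inside $\Omega_\alpha$, so \dots everything happens on $t<0$.'' That is exactly false in the regime of part~(ii): the graphs meet at $t=\rho>0$ (since $f(\rho)=g(\rho)=\rho$), and for $\alpha\in(\rho/\lambda,1/\lambda]$ the point $(\rho,\rho)$ lies \emph{inside} $\Omega_\alpha$, producing an extra component $\mathcal{D}^+$ at positive $t$ --- this is the defining geometric feature of case~(ii), not the reordering of the $l_n,r_n$. Moreover $(\rho,\rho)$ is a fixed point of $\mathcal{T}_\alpha^{h+1}$ sitting at the corner of $\overline{\mathcal{D}^+}$, so your assertion that ``$\mathcal{D}$ carries no invariant point and is entirely transient'' is not available as stated; a priori, points of $\mathcal{D}^+$ near this periodic point could take arbitrarily long to leave, which would destroy the uniform window $3h+2$. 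The paper's resolution is an orientation argument: $\mathcal{T}_\alpha$ ``flips'' $\mathcal{D}^+$ in the first step, so $(\rho,\rho)$ becomes the \emph{rightmost} point of $\mathcal{T}_\alpha^{h+1}(\overline{\mathcal{D}^+})$ and hence $\mathcal{T}_\alpha^{h+1}(\mathcal{D}^+)\cap\mathcal{D}^+=\emptyset$, i.e.\ all of $\mathcal{D}^+$ exits in exactly $h+1$ steps. One must also verify $\mathcal{T}_\alpha^{h+1}(\mathcal{D}_1)\cap\mathcal{D}^+=\emptyset$ (done in the paper for $\alpha>\alpha_4$ by computing the image of the rightmost vertex of $\overline{\mathcal{D}_1}$ and checking it is $<\rho$), and track $\mathcal{D}_1\to\mathcal{D}_2\to$ flushed through explicit images of $-\delta_1$ and $l_h$ to get the count $3h+2=(h+1)+(2h+1)$. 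Without the $\mathcal{D}^+$ analysis your part~(ii) does not close.
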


Case (i) is similar to the even case, but case (ii) yields a finite spectrum. We note such a finite spectrum was already described for the case $q=3$  in~\cite{HK}.

For odd $q = 2h+3$ we define a \emph{round} by $2h+1$ consecutive applications of $\mathcal{T}_\alpha$.

\subsection*{Intersection of the graphs of \texorpdfstring{$f(t)$ and $g(t)$}{f(t) and g(t)}}
\label{intersection}
We start by looking at the behavior of the graphs of $f(t)$ and $g(t)$ as given in~(\ref{eq: def f and g}) on $\Omega_\alpha$ for odd $q$.

\begin{figure}[!ht]
\includegraphics[height=5cm]{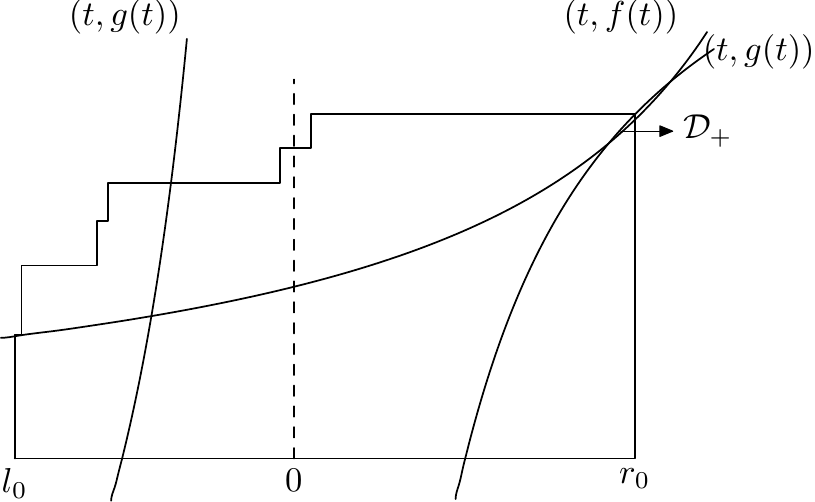}
\caption{For  $\alpha>\rho/\lambda$ we have a part ${\mathcal D}^+$ for positive $t$.}
\label{im: odd f and g}
\end{figure}

In case $t>0$ it is interesting to determine for which $t$ we have $f(t)=g(t)$. We find that
$$
f(t)=g(t)\quad  \textrm{if and only if} \quad t=\frac{1\pm \sqrt{1-4{\mathcal H}_q^2}}{2{\mathcal H}_q}.
$$
Clearly, 
$$
\frac{1+ \sqrt{1-4{\mathcal H}_q^2}}{2{\mathcal H}_q}>1\geq \alpha\lambda
$$
for all $\alpha\in [1/2,1/\lambda ]$, so we only need to consider $
\frac{1-\sqrt{1-4{\mathcal H}_q^2}}{2{\mathcal H}_q} 
 = \rho.$

Note that $f(\rho)=\rho$. In case
\begin{equation}\label{alpha_b}
\alpha = \frac{1 - \sqrt{1-4{\mathcal H}_q^2}}{2\lambda{\mathcal H}_q} = \frac{\rho}{\lambda},
\end{equation}
we find that for $t>0$ the intersection of the graphs of $f$ and $g$ is on the boundary of 
$\Omega_{\alpha}$, so if $\alpha\in [1/2, \rho/\lambda )$ we have that the intersection point for $t>0$
 is outside $\Omega_{\alpha}$, while for $\alpha\in (\rho/\lambda,1/\lambda]$ 
it will be inside  $\Omega_{\alpha}$. Therefore, for the latter values of $\alpha$ we have an extra
part ${\mathcal D}^+$ for positive $t$; see also Figure~\ref{im: odd f and g}.


\subsection{Odd case for \texorpdfstring{$\alpha \in  (\frac12, \frac{\rho}{\lambda})$}{alpha in the left interval}}
In this case the natural extension is given by $\Omega_\alpha=\bigcup_{n=1}^{4h+3} J_n \times [0,H_n]$, with
$$
\begin{aligned}
J_{4n-3}&=[l_{n-1},r_{h+n}), &J_{4n-2} &= [r_{h+n},l_{h+n})  &\quad \textrm{for} \quad n=1,\dots,h+1,\\
J_{4n-1}&=[l_{h+n},r_n),  &J_{4n} &= [r_n,l_n) &\quad \textrm{for} \quad n=1,\dots h,
\end{aligned}
$$
and
$$
\begin{aligned}
&H_1 = \frac{1}{\lambda +1/\rho}, \quad H_2 = \frac{1}{\lambda+1}, \quad H_3 = \frac{1}{\lambda+\rho},  \quad H_4 = \frac{1}{\lambda}, \\
\textrm{and} \quad &H_n = \frac{1}{\lambda - H_{n-4}} \quad \textrm{for} \quad n=5,6,\dots,4h+3.
\end{aligned}
$$

In~\cite{DKS} is shown that $H_{4h-1}=\lambda - \frac{1}{\rho},  H_{4h}=\lambda-1,  H_{4h+1}=\lambda -\rho$  and  $ H_{4h+2}=\frac{\lambda}{2}.$ Furthermore, from~\cite{DKS} we have that
$$
\begin{aligned}
l_h &= \frac{1-\alpha \lambda }{(\lambda-1)\alpha\lambda -1},  \quad r_h = -\frac{1-(1-\alpha)\lambda}{1-(1-\alpha)\lambda(\lambda-1)} \quad  \textrm{and}\\
r_{2h+1} &= -\frac{(2\alpha-1)\lambda}{\alpha\lambda^2-2\lambda+2}.
\end{aligned}
$$

We define ${\mathcal D}$ as in~(\ref{def:D}). We have proved above that in case $\alpha \leq \rho/\lambda$ we have ${\mathcal D}^+=\emptyset$. We could divide $\mathcal{ D}^-$ into regions where $d_n,\varepsilon_{n+1}$ and $\varepsilon_{n+2}$ are constant, as we did in Section~\ref{sec: even interval}. However, like before the region where $d_n = 2$ is the crucial one and we only describe the part of $\mathcal{D}^-$ that is on the right hand side of the line $t=-\delta_1$.

\begin{figure}[!ht]
\includegraphics[height=42mm]{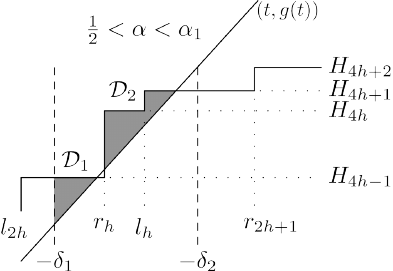}\hskip5mm
\includegraphics[height=48mm]{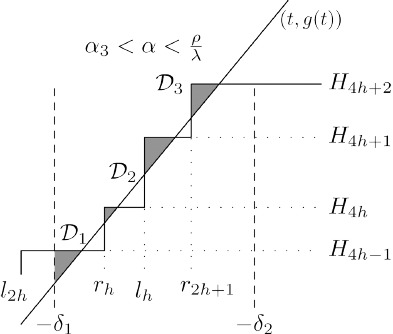}
\caption{Schematic presentation of the part of $\mathcal{ D}^-$ with $t>-\delta_1$  in two cases. If $\alpha_1 \leq \alpha < \alpha_2$, we have a similar picture as on the left: the only difference is that $\mathcal{ D}_2$ is split into two  component in this case. If $\alpha_2<\alpha \leq \alpha_3$ the picture is similar to the one on the right, but there is no $\mathcal{ D}_3$ in this case.}
\label{im: images of d in odd first case}
\end{figure}

\begin{Lemma}
\label{lem: shape d odd interval}
 If $\alpha \in \left(\frac12,\alpha_1\right)$, then the part of $\mathcal{ D}$ with $-\delta_1 <t<0$ consists of two  components: $\mathcal{ D}_1$ bounded by the line $t=-\delta_1$ from the left, the line $v=H_{4h-1}$ from above and the graph of $g$ from below. The second component $\mathcal{ D}_2$ is  bounded by line segments with $t=r_h, v=H_{4h}, t=l_h, v=H_{4h+1}$ and by the graph of $g$.  

 If $\alpha \in \left[\alpha_1,\alpha_3\right]$, then $\mathcal{ D}_1$ is as in the above case, but $\mathcal{ D}_2$ is split into two components bounded by line segments as above and by the graph of $g$.

 If $\alpha \in \left(\alpha_3, \frac{\rho}{\lambda}\right)$, then $\mathcal{ D}_1$ and $\mathcal{ D}_2$ are as in the previous case, but there is an extra component $\mathcal{ D}_3$, bounded by the line $t=r_{2h+1}$ from the left, the line $v=H_{4h+2}$ from above and the graph of $g$ from below.
\end{Lemma}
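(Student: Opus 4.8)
\emph{Proof proposal.} The plan is to mimic the argument of Lemma~\ref{th: shape D} for the even case. First I would record that for odd $q$ one has $\mathcal{H}_q=\rho/(\rho^2+1)$, and invoke the discussion preceding this subsection: since $\alpha<\rho/\lambda$ the graphs of $f$ and $g$ do not meet for $t>0$ inside $\Omega_\alpha$, so $\mathcal{D}^+=\emptyset$ and it suffices to describe $\mathcal{D}\cap\{t<0\}$. By~(\ref{eq: relations f g theta}) a point $(t,v)$ with $t<0$ lies in $\mathcal{D}$ exactly when $v>f(t)$ and $v>g(t)$, i.e.~above both graphs; by~(\ref{eq: def f and g}) one has $g(t)=-\mathcal{H}_q^{-1}-t^{-1}$ for $t<0$, so $f$ and $g$ are both strictly increasing there. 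Restricting to $-\delta_1<t<0$ I would first check that $g$, not $f$, is the binding lower boundary (equivalently, that the crossover where $f=g$ for $t<0$ lies at or to the left of $-\delta_1$ for the relevant $\alpha$), so that $\mathcal{D}$ is the region trapped between the graph of $g$ below and the step-shaped ceiling of $\Omega_\alpha$ above. On $-\delta_1<t<0$ this ceiling takes the successive values $H_{4h-1}=\lambda-1/\rho$, $H_{4h}=\lambda-1$, $H_{4h+1}=\lambda-\rho$ and $H_{4h+2}=\lambda/2$ over the intervals $J_{4h-1}=[l_{2h},r_h)$, $J_{4h}=[r_h,l_h)$, $J_{4h+1}=[l_h,r_{2h+1})$ and $J_{4h+2}=[r_{2h+1},l_{2h+1})$, whose endpoints are given by Theorem~\ref{th: position l r odd} and the explicit formulae for $l_h$, $r_h$, $r_{2h+1}$ quoted above.

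The heart of the proof is a sweep from right to left. Since $g$ is strictly increasing while the ceiling increases in steps from left to right, the trapped region alternately closes (when $g$ overtakes the current ceiling height $H_i$) and reopens (at the next step-up of the ceiling), and each such closing creates a new connected component. I would therefore solve $g(t)=H_i$ explicitly for $i=4h-1,4h,4h+2$, and compare the three resulting abscissae with the endpoints $r_h$, $l_h$ and $r_{2h+1}$. This should show that $\mathcal{D}_1$ is the piece over $[-\delta_1,g^{-1}(H_{4h-1}))$ capped by $v=H_{4h-1}$; that a separate component $\mathcal{D}_2$ reappears at $t=r_h$, because there the ceiling jumps from $H_{4h-1}$ up to $H_{4h}$; that $\mathcal{D}_2$ breaks into two pieces precisely when $g$ attains $H_{4h}=\lambda-1$ before $t=l_h$ (the step between $J_{4h}$ and $J_{4h+1}$); and that a fourth component $\mathcal{D}_3$ is born once $g^{-1}(H_{4h+2})$ moves to the right of $r_{2h+1}$.

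Equating a crossing abscissa to the relevant endpoint then pins down the thresholds: $g^{-1}(\lambda-1)=l_h$ should yield $\alpha=\alpha_1$ (the splitting of $\mathcal{D}_2$), and $g^{-1}(\lambda/2)=r_{2h+1}$ should yield $\alpha=\alpha_3$ (the birth of $\mathcal{D}_3$). The remaining constant $\alpha_2$, characterised in the preceding Remark by $r_{2h+1}=-\delta_2$, only relocates the boundary between the subregions of constant $d_{n+1}$ (it is where the digit $d$ switches from $2$ to $3$) and does not alter the number of connected components of $\mathcal{D}$; this is why the three ranges $(\frac12,\alpha_1)$, $[\alpha_1,\alpha_3]$ and $(\alpha_3,\rho/\lambda)$ already suffice for the statement. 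I expect the main obstacle to be bookkeeping rather than any single idea: using the identity $\rho^2+(2-\lambda)\rho-1=0$ together with the relations~(\ref{eq: B_n odd relations}), one must verify that each computed crossing point really lands in the claimed interval $J_n$, that $\mathcal{D}_1$ always closes before $t=r_h$ so that $\mathcal{D}_1$ and $\mathcal{D}_2$ are genuinely disjoint, and that the ordering $\frac12<\alpha_1<\alpha_2<\alpha_3<\rho/\lambda$ of~(\ref{eq: order odd alpha}) holds for every admissible $\lambda$. A secondary point needing care is confirming that $g$, and not $f$, is the true lower boundary of $\mathcal{D}$ throughout $-\delta_1<t<0$, so that each component is bounded below by the graph of $g$ as the statement asserts.
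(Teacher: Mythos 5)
Your plan is essentially the paper's proof: the paper performs the same left-to-right sweep of the increasing graph of $g$ against the step ceiling of $\Omega_\alpha$, deriving $\alpha_1$ from comparing $g(l_h)$ with $H_{4h}$ and $\alpha_3$ from comparing $g(r_{2h+1})$ with $H_{4h+2}$, together with the unconditional inequalities $g(-\delta_1)<H_{4h-1}<g(r_h)<H_{4h}$, $g(r_{2h+1})>H_{4h+1}$ and $g(-\delta_2)>H_{4h+2}$ — and since $g$ is strictly increasing, evaluating $g$ at the interval endpoints is the same as comparing the abscissae $g^{-1}(H_i)$ with those endpoints as you propose. The only step you flag that the paper does not address at all is whether $g$ (rather than $f$) is the binding lower boundary throughout $(-\delta_1,0)$; the paper takes this for granted, so carrying out that check would only sharpen, not change, the argument.
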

\begin{proof}
Recall that $H_{4h-1}=\lambda - \frac{1}{\rho},  H_{4h}=\lambda-1,  H_{4h+1}=\lambda -\rho$  and  $ H_{4h+2}=\frac{\lambda}{2}.$ First of all $g(l_h) = -\frac{1}{\mathcal{H}_q}-\frac{1}{l_h} =-\frac{1}{\mathcal{H}_q}-\frac{(\lambda-1)\alpha\lambda-1}{1-\alpha\lambda} $ and we find
$$
\begin{aligned}
H_{4h-1}< g(l_h) < &\quad H_{4h}  & \quad \textrm{if} \quad \frac12 < \alpha<\alpha_1,\\
H_{4h}\leq g(l_h) < &\quad H_{4h+1}  & \quad \textrm{if} \quad \alpha_1 \leq \alpha<\frac{\rho}{\lambda}.
\end{aligned}
$$
From $g(r_{2h+1}) = -\frac{1}{\mathcal{H}_q}+\frac{\alpha\lambda^2-2\lambda+2}{(2\alpha-1)\lambda} $  we find that 
$$
g(r_{2h+1}) > H_{4h+2}  \quad \textrm{if and only if} \quad \alpha < \frac{(2-\lambda)^2\mathcal{H}_q+2\lambda}{4\lambda }=\alpha_3. 
$$

Finally, for all $\alpha \in \left(\frac12,\frac{\rho}{\lambda}\right)$ we have that
$$
g(-\delta_1) < H_{4h-1} < g(r_h) < H_{4h} , \quad g(r_{2h+1})> H_{4h+1} \quad \textrm{and} \quad g(-\delta_2) > H_{4h+2},
$$
which finishes the proof.
\end{proof}

%

%



\emph{The proof of Theorem~\ref{th: 3 tong odd general} for $\frac12 \leq \alpha < \frac{\rho}{\lambda}$}.
Consider the intersection point from the graph of $g$ with the line $v=H_{4h+1}=\lambda-\rho$. The first coordinate of this point is given by
$$
t_1 = \frac{-\mathcal{H}_q}{1+(\lambda-\rho)\mathcal{H}_q} = \frac{-\rho}{1+\rho\lambda}.
$$
Note that $t_1 \in (l_h,-\delta_2)$. We find $T_\alpha(t_1) = \frac{1}{\rho}-\lambda$ and it easily follows that we have  $T_\alpha(t_1) \in (l_{h+1},r_1)$ for all $\alpha<\frac{\rho}{\lambda}$. From Theorem~\ref{th: position l r odd} we conclude that $T_\alpha^{h}(t_1) = (S^{-1}T)^{h-1}S^{-2}T(t_1)$. From Lemma~\ref{lem: st explicit} and relations~(\ref{eq: bn chap 3})~and~(\ref{eq: B_n odd relations}) we get
$$
\begin{aligned}
 (S^{-1}T)^{h-1}S^{-2}T &=   
 \left[ \begin{array}{cc}
-B_h & -B_{h-1}\\
B_{h-1} & B_{h-2}       
\end{array} \right]
\left[ \begin{array}{cc}
-2\lambda  & -1\\
1 & 0       
\end{array} \right]
\\&= B_{h+1} 
 \left[ \begin{array}{cc}
-\lambda + 1& -\lambda^2+\lambda+1\\
\lambda^2-\lambda-1 &\lambda^3 -\lambda^2-2\lambda+1      
\end{array} \right]
\left[ \begin{array}{cc}
-2\lambda  & -1\\
1 & 0       
\end{array} \right]\\
&=  B_{h+1} 
\left[ \begin{array}{cc}
\lambda^2-\lambda+1 & \lambda-1  \\
-\lambda^3+\lambda^2+1 & -\lambda^2+\lambda+1
\end{array} \right].
\end{aligned}
$$
We find $T_\alpha^{h}(t_1) = \displaystyle{- \frac{\lambda-\rho-1}{\lambda(\lambda-\rho-1)+\rho-1}}$, and using $\rho^2 +(2-\lambda)\rho-1 = 0$ we find
$$
g(T_\alpha^{h}(t_1)) = \frac{-1}{\mathcal{H}_q} - \frac{1}{T_\alpha^{h}(t_1)} =  -\rho - \frac{1}{\rho} + \lambda + \frac {\rho-1}{\lambda-\rho-1}= \lambda  -\frac{1}{\rho} = H_{4h-1}.
$$
So $(T_\alpha^{h}(t_1),\lambda  -\frac{1}{\rho} )$ is the intersection point of the graph of $g$ with the height $v= H_{4h-1}$. From the proof of Lemma~\ref{lem: shape d odd interval} it follows that   $T_\alpha^{h}(t_1) \in (-\delta_1,r_h)$. We find that 
$$
T_\alpha^{h+1}(t_1) =T_\alpha(T_\alpha^h(t_1)) = -\lambda+\frac{\rho-1}{\lambda-\rho-1}.
$$
Since $T_\alpha^{h+1}(t_1) < r_{h+1}$, we conclude that 
$$
\begin{aligned}
T_\alpha^{2h+1}(t_1) &=(S^{-1}T)^{h} S^{-2}T (S^{-1}T)^{h-1}S^{-2}T(t_1)\\
& =  B_{h+1}\left[ \begin{array}{cc}
-1& -\lambda+1\\
\lambda-1 & \lambda^2 -\lambda-1      
\end{array} \right]   \left(-\lambda+\frac{\rho-1}{\lambda-\rho-1}\right)\\
&= \frac{\lambda-2\rho}{2+\lambda(\rho-2)} = \frac{-\rho}{1+\lambda\rho}= t_1.
\end{aligned}
$$
We find that $\left(t_1,\lambda-\rho\right)$ is a fixed point of $\mathcal{T}_\alpha^{2h+1}$.

The rest of the proof is similar to the even case for $\alpha \in \left(\frac12,\frac{1}{\lambda}\right)$. In this case $(\tau_{k},\nu_k) = T_\alpha^{-k(2h+1)} \left(l_h,\lambda-\rho\right)$. We have $\tau_{k-1}<\tau_k$ and $\displaystyle{\lim_{k\rightarrow\infty} \tau_k = t_1}$. From
$c_k= \Theta_n(\tau_{k},\nu_k)  =\frac{-\tau_{k-1}}{1+\tau_{k-1}\nu_{k-1}}$, we find $c_k<c_{k-1}$ and 
$$\lim_{k\rightarrow\infty} c_k = \frac{-1}{\frac{1}{t_1}+\lambda-\rho}= \frac{-1}{\frac{1+(\lambda-\rho)\mathcal{H}_q}{-\mathcal{H}_q}+\lambda-\rho} = \mathcal{H}_q.$$

\subsection{Odd case for \texorpdfstring{$\alpha = \frac{\rho}{\lambda}$}{alpha is rho/lambda}}
Hitoshi Nakada recently observed that the dynamical systems $(\Omega_{1/2},\mu_{1/2},{\mathcal T}_{1/2})$ and $(\Omega_{\rho/\lambda},\mu_{\rho/\lambda},{\mathcal T}_{\rho/\lambda})$ are metrically isomorphic via $M$ given in~(\ref{eq: M mirror}).  In Section~\ref{sec: even lambda} we used this isomorphism to derive results for $(\Omega_{1/\lambda},\mu_{1/\lambda},{\mathcal T}_{1/\lambda})$ from $(\Omega_{1/2},\mu_{1/2},{\mathcal T}_{1/2})$ by applying Theorem~\ref{th: sequence theta dual }. For odd $q$ and $\alpha=\frac{\rho}{\lambda}$ we can do the same and for this case the proof of Theorem~\ref{th: 3 tong odd general} is similar to the one for the even case with $\alpha=\frac{1}{\lambda}$ given in Section~\ref{sec: even lambda}. 

For $q=3$ this result (with in this case $K=1$) had been known for a long time for the nearest integer continued fraction expansion (the case $\alpha = 1/2$) and for the singular continued fraction expansion ($\alpha=\frac{1}{2}(\sqrt{5}-1)$), cf.~\cite{HK}.

\subsection{Odd case for \texorpdfstring{$\alpha \in  (\frac{\rho}{\lambda},1/\lambda]$}{alpha in the right interval}}
In this last case the natural extension $\Omega_\alpha$ is given by $\Omega_\alpha = \bigcup_{n=1}^{4h+3} J_n \times [0,H_n]$. With intervals given by
$$
\begin{aligned}
J_{2n-1} &=& [l_{n-1},r_n)& \quad \textrm{for } n=1,2,\dots ,h+1&\\
J_{2n} &=& [r_n,l_n) &\quad \textrm{for } n=1,2,\dots ,h &  \quad \textrm{ and } \quad
 J_{2h+2} = [r_{h+1},r_0),
\end{aligned}
$$

and heights defined by
$$
H_1 = \frac{1}{\lambda+1}, H_2=\frac1{\lambda} \ \quad \textrm{ and } \quad
 H_n = \frac{1}{\lambda-H_{n-2}} \quad \textrm{ for } n=3,4,\dots,2h+2.
$$
In~\cite{DKS} was shown that $H_{2h}=\lambda-1, H_{2h+1}= \frac{\lambda}{2}$ and $H_{2h+2}=1$. If $\alpha = \frac{1}{\lambda}$ the intervals $J_{2n-1}$ are empty; see Theorem~\ref{th: position l r odd}.  Again we have $l_h = \frac{\alpha \lambda-1}{1-\alpha\lambda(\lambda-1)}$. 

\subsubsection{Points in \texorpdfstring{$\mathcal{ D}^+$}{D+}}

We saw in Section~\ref{intersection} that ${\mathcal D}^+\neq \emptyset$. The leftmost point of $\overline{\mathcal{ D}^+}$ is given by $(\rho,\rho)$. Using the same techniques as in the rest of this article yields
$$
\begin{aligned}
T^{h+1}_\alpha (\rho)& =  (S^{-1}T)^{h}ST(\rho) \\
&
= B_{h+1} \left[ \begin{array}{cc}
-1  & -1\\
1 & \lambda-1      
\end{array} \right] (-\rho) = \frac{\rho-1}{-\rho+\lambda-1} =\rho.
\end{aligned}
$$
It easily follows that $ \mathcal{T}^{h+1}_\alpha(\rho,\rho) =(\rho,\rho)$.

\begin{Rmk}
In~\cite{BKS} was shown that $\rho = [\overline{+1:1,(-1:1)^h}]$ from which immediately follows $T_\alpha^{h+1}(\rho)=\rho$.

\end{Rmk}

We note that ${\mathcal T}_\alpha$ `flips' $\mathcal{ D}^+$ in the first step, in the sense that $\mathcal{T}_\alpha(\rho,\rho)$ is the rightmost point of $\mathcal{T}(\overline{D^+})$. The orientation is preserved in the next $h$ steps, so  we know that the right most point of ${\mathcal T}_\alpha^{h+1}(\overline{{\mathcal D}^+})$ is given by $\mathcal{T}_\alpha^{h+1}(\rho,\rho) = (\rho,\rho)$. Therefore we find that
$$
{\mathcal T}_\alpha^{h+1}({\mathcal D}^+) \cap {\mathcal D}^+ =\emptyset .
$$
 So after one round all points in $\mathcal{ D}^+$ are flushed out of $\mathcal{ D}$. Furthermore, it is straightforward to check that  ${\mathcal T}_\alpha({\mathcal D}^+)\subset {\mathcal D}$, and that ${\mathcal T}_\alpha^{i}({\mathcal D}^+)\cap {\mathcal T}_\alpha^{j}({\mathcal D}^+)=\emptyset$ for $0\leq i<j\leq h+1$. 


\subsubsection{Points in \texorpdfstring{$\mathcal{ D}^-$}{D-}}
Like in the previous section we only describe the part of $\mathcal{ D^-}$ on the right hand side of the line $t=-\delta_1$. 

\begin{Lemma}
If $\alpha \in \left(\frac{\rho}{\lambda},\alpha_4\right]$, then $\mathcal{ D}^-$ consists of two  components: $\mathcal{ D}_1$ bounded by the line $t=-\delta_1$ from the left, the line $v=H_{2h}=\lambda-1$ from above and the graph of $g$ from below and $\mathcal{ D}_2$ bounded by the line $t=l_h$ from the left, the line $v=H_{2h+1}=\frac{\lambda}{2}$ from above and the graph of $g$ from below. 

If $\alpha \in \left(\alpha_4, \frac{1}{\lambda}\right]$, then $\mathcal{ D}^-= \mathcal{ D}_1$.
\end{Lemma}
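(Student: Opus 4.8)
The plan is to read off $\mathcal{D}^-$ directly from~(\ref{eq: relations f g theta}): for $t<0$ a point $(t,v)$ lies in $\mathcal{D}$ exactly when $v>f(t)$ and $v>g(t)$, where $g(t)=-\tfrac{1}{\mathcal{H}_q}-\tfrac1t$ is strictly increasing on $(-\infty,0)$. First I would check that on $t<0$ the graph of $g$ lies above that of $f$, so that the lower boundary of $\mathcal{D}^-$ is always the graph of $g$. This is immediate: the two graphs meet only at $t=\frac{1\pm\sqrt{1-4\mathcal{H}_q^2}}{2\mathcal{H}_q}$, both positive, so they do not cross for $t<0$, and since $g(t)\to+\infty$ while $f(t)\to\mathcal{H}_q$ as $t\to0^-$, necessarily $g>f$ throughout $(-\infty,0)$. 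Describing $\mathcal{D}^-$ then reduces to locating the increasing curve $v=g(t)$ relative to the horizontal ceilings $H_n$ of $\Omega_\alpha$.

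Next I would invoke Theorem~\ref{th: position l r odd}: for $\tfrac{\rho}{\lambda}<\alpha<\tfrac1\lambda$ one has $r_h<-\delta_1<l_h<0<r_{h+1}$, so $-\delta_1$ lies in the interior of $J_{2h}=[r_h,l_h)$, whose ceiling is $H_{2h}=\lambda-1$, while $J_{2h+1}=[l_h,r_{h+1})$ has ceiling $H_{2h+1}=\tfrac\lambda2$. Restricting attention to $t>-\delta_1$, the part of $\mathcal{D}^-$ in question sits over $(-\delta_1,l_h)$ (ceiling $\lambda-1$) and over $[l_h,0)$ (ceiling $\tfrac\lambda2$). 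The decisive feature is the junction $t=l_h$, where the ceiling jumps \emph{up} from $\lambda-1$ to $\tfrac\lambda2$: whether $\mathcal{D}_1$ closes off before $l_h$ (leaving a gap) is governed by whether $g(l_h)>\lambda-1$, and whether a second region survives over $J_{2h+1}$ is governed by whether $g(l_h)<\tfrac\lambda2$. Everything therefore hinges on the single number $g(l_h)$.

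The core computation is to evaluate $g(l_h)$ from $l_h=\frac{\alpha\lambda-1}{1-\alpha\lambda(\lambda-1)}$ and to solve $g(l_h)=\lambda-1$ and $g(l_h)=\tfrac\lambda2$; I expect these to return precisely $\alpha=\alpha_1$ and $\alpha=\alpha_4$. Combined with the monotonicity of $g(l_h)$ in $\alpha$ — which holds because $g$ is increasing and $l_h<0$ increases toward $0$ as $\alpha$ grows (the derivative of the explicit $l_h$ has the sign of $2-\lambda>0$) — the ordering~(\ref{eq: order odd alpha}), $\alpha_1<\tfrac{\rho}{\lambda}<\alpha_4$, finishes the argument. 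On the whole range $(\tfrac{\rho}{\lambda},\tfrac1\lambda]$ one gets $g(l_h)>\lambda-1$, so $\mathcal{D}_1$ is always separated from anything over $J_{2h+1}$, while $g(l_h)\le\tfrac\lambda2$ exactly for $\alpha\le\alpha_4$. Hence for $\alpha\in(\tfrac{\rho}{\lambda},\alpha_4]$ a second curvilinear triangle $\mathcal{D}_2$ bounded by $t=l_h$, $v=\tfrac\lambda2$ and $g$ survives (shrinking to a point at $\alpha_4$), giving two components, whereas for $\alpha\in(\alpha_4,\tfrac1\lambda]$ the increasing $g$ already exceeds $\tfrac\lambda2$ for all $t\ge l_h$ and $\mathcal{D}^-=\mathcal{D}_1$. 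To confirm $\mathcal{D}_1$ is genuinely nonempty I would also note $g(-\delta_1)<\lambda-1$, which reduces to $\alpha\lambda<\tfrac1{\mathcal{H}_q}-1$ and hence to $(\lambda-2)^2\ge0$.

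The main obstacle is bookkeeping rather than conceptual: carrying out the two rational-function manipulations for $g(l_h)=\lambda-1$ and $g(l_h)=\tfrac\lambda2$ cleanly enough to recognize $\alpha_1$ and $\alpha_4$, and treating the degenerate case $\alpha=\alpha_4$ (where $\mathcal{D}_2$ collapses to a point, just as $\mathcal{D}_2$ did at $\alpha_0$ in the $q=4$ analysis). The one genuinely substantive point is that the separation $g(l_h)>\lambda-1$ must hold on the \emph{entire} interval $(\tfrac{\rho}{\lambda},\tfrac1\lambda]$; this is exactly why it matters that the threshold $\alpha_1$ where $g(l_h)=\lambda-1$ lies strictly below $\tfrac{\rho}{\lambda}$, which~(\ref{eq: order odd alpha}) guarantees.
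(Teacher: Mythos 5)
Your proposal follows the paper's own proof in all of its essential steps: the paper's argument is exactly the two comparisons $g(-\delta_1)<H_{2h}<g(l_h)$ for all $\alpha\in\left(\frac{\rho}{\lambda},\frac{1}{\lambda}\right]$ and $g(l_h)<H_{2h+1}$ if and only if $\alpha<\alpha_4$, read against the ordering $r_h<-\delta_1<l_h<0<r_{h+1}$ from Theorem~\ref{th: position l r odd}. Your identification of the thresholds with $\alpha_1$ and $\alpha_4$, the monotonicity of $g(l_h)$ in $\alpha$, and the use of~(\ref{eq: order odd alpha}) to see that the $\lambda-1$ threshold lies below $\frac{\rho}{\lambda}$ are all correct and consistent with what the paper does (and with the companion Lemma~\ref{lem: shape d odd interval}).

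There is, however, one genuinely wrong step: the claim that the graphs of $f$ and $g$ ``do not cross for $t<0$.'' The roots $t=\frac{1\pm\sqrt{1-4\mathcal{H}_q^2}}{2\mathcal{H}_q}$ come from equating $f$ with the \emph{positive} branch $g(t)=\frac{t-\mathcal{H}_q}{\mathcal{H}_q t}$. For $t<0$ one has $g(t)=\frac{-t-\mathcal{H}_q}{\mathcal{H}_q t}$, and $f(t)=g(t)$ becomes $\mathcal{H}_q t^2-t-\mathcal{H}_q=0$, which \emph{does} have a negative root $t_*=\frac{1-\sqrt{1+4\mathcal{H}_q^2}}{2\mathcal{H}_q}$; correspondingly, your limit argument fails at the other end, since $g(t)\to-\frac{1}{\mathcal{H}_q}<0<f(t)$ as $t\to-\infty$, so $g<f$ for sufficiently negative $t$ (this is precisely why in the even case, Lemma~\ref{th: shape D}, the component $\mathcal{D}_2$ is bounded below by $f$ rather than $g$). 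Your conclusion survives because the lemma only describes the part of $\mathcal{D}^-$ with $t>-\delta_1$, and one can check that $t_*\leq-\delta_1$ throughout $\alpha\in\left(\frac{\rho}{\lambda},\frac{1}{\lambda}\right]$ (with equality in the limiting case $q=5$, $\alpha=\frac{\rho}{\lambda}$), so that $g>f$ on the whole interval $(-\delta_1,0)$ actually under consideration. That verification needs to be added; as written, the step asserting the non-crossing is false and the description of the lower boundary of $\mathcal{D}_1$ and $\mathcal{D}_2$ as the graph of $g$ is left unjustified.
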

\begin{proof}
For all $\alpha \in \left(\frac{\rho}{\lambda},\frac{1}{\lambda}\right]$ we have that $g(-\delta_1)<H_{2h} <g(l_h)$. Furthermore $g(l_h)< H_{2h+1}$ if and only if  $\alpha < \alpha_4$.
\end{proof}

Again we must distinguish between two subcases; also see Figure~\ref{im: last odd dangerzones}.

\begin{figure}[!ht]
\includegraphics[height=90mm]{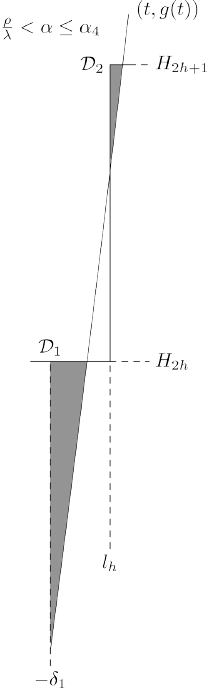}
\includegraphics[height=90mm]{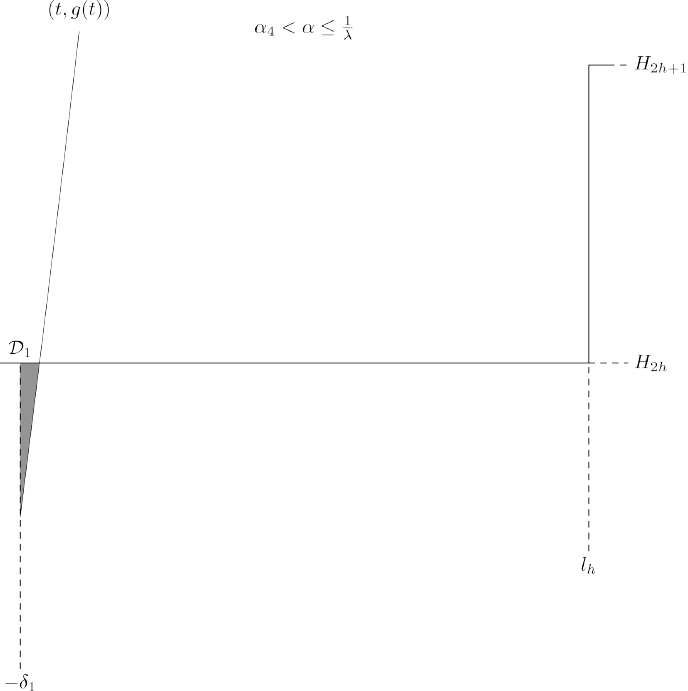}
\caption{On the left we used $\alpha = 0.50098 <\alpha_4$, on the right we use $\alpha =0.52> \alpha_4$.}
\label{im: last odd dangerzones}
\end{figure}

\emph{Proof of Theorem~\ref{th: 3 tong odd general} for $ \frac{\rho}{\lambda} <\alpha\leq \alpha_4$.} 
First we note that $T^{h+1}(-\delta_1) = l_h$ and thus after one round all points in $\mathcal{ D}_1$ are either flushed or send to $\mathcal{ D}_2$. 

We look at the orbit of $l_h$ under $T_\alpha$. We find $T_\alpha(l_h) = \frac{1+\alpha\lambda(\lambda+1)-2\lambda}{1-\alpha\lambda}$ and some calculations show that $r_1 < T_\alpha(l_h) < l_1$. We conclude that 
$$
\begin{aligned}
T_\alpha^h(l_h) &= (S^{-1}T)^{h-1}S^{-2}T(l_h) =  \left[ \begin{array}{cc}
\lambda^2-\lambda+1 &\lambda-1\\
-\lambda^3+\lambda^2+1 & -\lambda^2+\lambda+1
\end{array}\right] (l_h)\\& = \frac{2\lambda + (\alpha -1)\lambda^2 - 2}{((1-\alpha )\lambda^2 + 2\alpha +1-2\lambda )\lambda}.
\end{aligned}
$$

One can check that for $\alpha > \frac{\rho}{\lambda}$ we have $-\delta_1 < T_\alpha^h(l_h) <l_h$. We find
$$
T_\alpha^{h+1}(l_h)= T_\alpha(T^h_\alpha(l_h))=\frac{(\lambda^2-2\lambda-\alpha(\lambda^2+2)+3)\lambda }{2\lambda+(\alpha-1)\lambda^2-2} <r_1.
$$

So we finally conclude that the image of $l_h$ after one round of $2h+1$ steps is given by
$$
\begin{aligned}
T_\alpha^{2h+1}(l_h) &= (S^{-1}T)^{h}S^{-2}T(S^{-1}T)^{h-1}S^{-2}T(l_h) =  \left[ \begin{array}{cc}
\lambda^2+2 &\lambda\\
-2\lambda^2+\lambda & 2-2\lambda 
\end{array}\right] (l_h)\\
=& \frac{(-\alpha+1)\lambda^2-2\alpha\lambda-\lambda+2}{(3\alpha-2)\lambda^2+(3-2\alpha)\lambda-2}.
\end{aligned}
$$

The right end point of $\mathcal{ D}_2$ is given by $\left(\frac{-2\mathcal{H}_q}{\lambda \mathcal{H}_q+2},\frac{\lambda}{2}\right)$. Since for $ \frac{\rho}{\lambda } < \alpha \leq \alpha_4$ we have
$\frac{-2\mathcal{H}_q}{\lambda \mathcal{H}_q+2}< \frac{(-\alpha+1)\lambda^2-2\alpha\lambda-\lambda+2}{(3\alpha-2)\lambda^2+(3-2\alpha)\lambda-2}< 0$, we find that all points are flushed out of $\mathcal{ D}_2$ after one round. So all points in $\mathcal{ D}_1$ are flushed after at most $3h+2$ steps.

\emph{Proof of Theorem~\ref{th: 3 tong odd general} for $\alpha_4 < \alpha \leq \frac{1}{\lambda}$.} 
All points are flushed out of $\mathcal{ D}_1$ after $h+1$ steps, since the line $t=-\delta_1$ is mapped by $T_\alpha$ to the line $t=l_0$. So after $h+1$ steps all points in $\mathcal{ D}$ are mapped to points that lie on the right hand side of the line $t=l_h$ and thus outside of $\mathcal{ D}_1$. 

We need to check that $\mathcal{T}_\alpha^{h+1}(\mathcal{ D}_1) \cap \mathcal{ D}^+ = \emptyset$. The rightmost vertex of $\overline{\mathcal{ D}_1}$ is given by $\left( \displaystyle{\frac{-1}{\lambda-1+\frac{1}{\mathcal{H}_q}}}, \lambda -1 \right)$.
With similar techniques as before we find that applying $(h+1)$-times  ${\mathcal T}_{\alpha}$ to the rightmost vertex of ${\mathcal D}_1$ yields as first coordinate
$$
\begin{aligned}
&T_\alpha^{h+1} \left(\frac{-1}{\lambda-1+\frac{1}{\mathcal{H}_q}} \right)
	\\ &= \left[ \begin{array}{cc}
	-\lambda  &-1\\
	1 & 0
	\end{array}\right]  \left[ \begin{array}{cc}
	\lambda^2-\lambda+1 &\lambda-1\\
	-\lambda^3+\lambda^2+1 & -\lambda^2+\lambda+1
	\end{array}\right]\left(\frac{-1}{\lambda-1+\frac{1}{\mathcal{H}_q}} \right)\\
	&= \frac{1-2\mathcal{H}_q}{1+(\mathcal{H}_q-1)\lambda} <\rho.
\end{aligned}
$$

This completes the proof. \qed

\section{Borel and Hurwitz constants for \texorpdfstring{$\alpha$}{alpha}-Rosen fractions}
\label{sec: Hurwitz rosen} 
In this section we first prove the Borel-type Theorem~\ref{thm: article Borel alpha} and then derive a Hurwitz-type result for certain values of $\alpha$.

\subsection{Borel for \texorpdfstring{$\alpha$}{alpha}-Rosen fractions}

%

Let $q \geq 3$ be an integer. Recall that for all $\alpha \in \left[\frac12,\frac{1}{\lambda}\right] $ and every $G_q$-irrational $x \in [(\alpha-1)\lambda,\alpha\lambda)$ the future $t_n$ and past $v_n$ satisfy
\begin{equation}
\label{eq: tn end equation}
\mathcal{T}^n_\alpha (x,0) =(t_n,v_n)  \quad \textrm{for all } n \geq 0.
\end{equation}

We denote by $r$ the number of steps in a round, so $r=p-1$ for even $q$ and $r=2h+1$ for odd $q$. Furthermore, for even $q$ we define $\rho=1$. 

\begin{Lemma}\label{lem:FixedAndFlushed}   
Let either $q$ be even and $\alpha \in \left[ \frac12, \frac{1}{\lambda}\right]$ or $q$ be odd and $\alpha \in \left[\frac12,\frac{\rho}{\lambda}\right]$. Let $\mathcal F$ denote the
fixed point set in ${\mathcal D}$ of ${\mathcal T}_{\alpha}^{r}$. Then
\begin{enumerate}
\item[($i$)]  $\mathcal F = \left\{\, {\mathcal T}_{\alpha}^i\left(\frac{-\rho}{1+\lambda\rho} ,\lambda -\rho\right)\,\bigg|\, i=0,1,\dots,r -1 \right\}$;

\item[($ii$)]     For every  $x$ and every $n\geq 0$, $(t_n,v_n) \notin \mathcal F$;

\item[($iii$)]  For every $G_q$-irrational number $x$ there are infinitely many $n$ for which $(t_n,v_n)\notin {\mathcal D}$;

\item[($iv$)]  For each $i=0,1,\dots,r-1$, let  $x_i=T_{\alpha}^i\left(\frac{-\rho}{1+\lambda\rho}\right)\,$.   Then for all $n\geq 0$,  ${\mathcal T}_{\alpha}^n(x_i,0)\notin {\mathcal D}$.
However, ${\mathcal T}_{\alpha}^{kr}(x_i,0)$ converges from below on the vertical line $x=x_i$ to ${\mathcal T_{\alpha}}^i\left(\frac{-\rho}{1+\lambda\rho} ,\lambda -\rho\right)$.
\end{enumerate}
\end{Lemma}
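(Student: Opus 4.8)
The plan is to build everything around the single periodic point $P=\left(\tfrac{-\rho}{1+\lambda\rho},\,\lambda-\rho\right)$, which in the even case ($\rho=1$) is the fixed point of $\mathcal{T}_\alpha^{p-1}$ from Corollary~\ref{cor: even fixed point} and in the odd case is the fixed point of $\mathcal{T}_\alpha^{2h+1}$ produced in the proof of Theorem~\ref{th: 3 tong odd general}(i). First I would record two facts established earlier: that $y:=\tfrac{-\rho}{1+\lambda\rho}$ is purely periodic under $T_\alpha$ with \emph{exact} period $r$ (this is $\tfrac{-1}{\lambda+1}=[\overline{(-1:2),(-1:1)^{p-2}}]$ and its odd analogue from~\cite{BKS}), and that $P$ lies on the graph of $g$, since a direct substitution gives $g\!\left(\tfrac{-1}{\lambda+1}\right)=\lambda-1$ in the even case and, in the odd case, $(t_1,\lambda-\rho)$ is exactly the intersection of the graph of $g$ with $v=H_{4h+1}=\lambda-\rho$. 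Consequently $P$ lies on $\partial\mathcal D$, so $\mathcal F$ is to be read as the set of $\mathcal{T}_\alpha^r$-fixed points in the closure $\overline{\mathcal D}$; the orbit $\mathcal O:=\{\mathcal{T}_\alpha^i(P)\mid 0\le i\le r-1\}$ then consists of $r$ \emph{distinct} points, each fixed by $\mathcal{T}_\alpha^r$ because $\mathcal{T}_\alpha^r\mathcal{T}_\alpha^i(P)=\mathcal{T}_\alpha^i\mathcal{T}_\alpha^r(P)=\mathcal{T}_\alpha^i(P)$.

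For part $(i)$ I would prove $\mathcal O=\mathcal F$ by two inclusions. The inclusion $\mathcal O\subseteq\mathcal F$ amounts to checking that the whole orbit lies in $\overline{\mathcal D}$, i.e.\ that the periodic approximation coefficients attached to $P$ never drop below $\mathcal H_q$; this follows from tracking $\Theta_n(P)=\mathcal H_q$ around the cycle by means of the identity $\Theta_m(t_{n+1},v_{n+1})=\Theta_{m+1}(t_n,v_n)$. For the reverse inclusion I would use that on the start-of-round region $\mathcal A\cup\mathcal D_3$ the map $\mathcal{T}_\alpha^r$ coincides with the single Möbius transformation of Lemma~\ref{lem: M T  explicit}, whose only fixed point inside $\Omega_\alpha$ is $P$ (its second fixed point $\tfrac{-1}{\lambda-1}<-1$ lies outside); so $P$ is the unique $\mathcal{T}_\alpha^r$-fixed point in $\mathcal A\cup\mathcal D_3$, and I would reduce an arbitrary $Q\in\mathcal F$ to this region through its periodic first coordinate.

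Part $(ii)$ I would settle by the observation that $(t_n,v_n)=\mathcal{T}_\alpha^n(x,0)$ always has a \emph{finite} past $v_n=[1:d_n,\varepsilon_n:d_{n-1},\ldots,\varepsilon_2:d_1]$, whereas every point of $\mathcal F$ has a purely periodic, hence infinite, past; the two cannot coincide. For $(iii)$ I would argue by contradiction: if only finitely many $(t_n,v_n)$ lay outside $\mathcal D$, the orbit would stay in $\mathcal D$ from some index on, reach $\mathcal A\cup\mathcal D_3$ within a round by Theorem~\ref{th: position l r even}, and there be flushed out of $\mathcal D$ after finitely many rounds by Theorem~\ref{th: flushing} and Lemma~\ref{lem: d flushen} — the only never-flushed points being $\mathcal F$ itself, which $(ii)$ excludes. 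Finally, for $(iv)$, with $x_i=T_\alpha^i(y)$, periodicity gives $t_{kr}^{(i)}=T_\alpha^{kr+i}(y)=x_i$, so the points $\mathcal{T}_\alpha^{kr}(x_i,0)$ all lie on the line $t=x_i$; their second coordinates are the length-$kr$ truncations of the periodic past of $\mathcal{T}_\alpha^i(P)$ and increase \emph{monotonically from below} to $\lambda-\rho$ (the contraction factor $<1$ being exactly the one computed in Lemma~\ref{lem: d flushen}). Because the limit point lies on the graph of $g$ while the truncations stay strictly below it, $\Theta_n=\tfrac{\varepsilon_{n+1}t_n}{1+t_nv_n}<\mathcal H_q$ at the relevant step, so $\mathcal{T}_\alpha^n(x_i,0)\notin\mathcal D$ for every $n$.

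I expect the main obstacle to be the reverse inclusion in $(i)$: ruling out spurious $\mathcal{T}_\alpha^r$-fixed points in the components of $\mathcal D$ away from $\mathcal A\cup\mathcal D_3$. This requires showing that membership of a fixed point in $\overline{\mathcal D}$ confines its periodic first coordinate to the single length-$r$ cylinder realised by the word $(-1:2),(-1:1)^{p-2}$ (and its odd counterpart), so that some iterate necessarily meets the $d=2$ start-of-round region and can be matched with $P$. A secondary technical point is that the endpoint cases $\alpha=\tfrac{1}{\lambda}$ (even) and $\alpha=\tfrac{\rho}{\lambda}$ (odd) are not covered by the direct computations and must instead be transported through the mirror isomorphism $M$ of~(\ref{eq: M mirror}) and Theorem~\ref{th: sequence theta dual }, under which forward time becomes backward time and the phrase ``converges from below'' in $(iv)$ has to be reinterpreted on the mirrored domain.
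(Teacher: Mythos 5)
Your proposal follows essentially the same route as the paper's proof: part $(i)$ from the fixed point of Corollary~\ref{cor: even fixed point} (and its odd analogue) together with its $\mathcal{T}_\alpha$-orbit, part $(ii)$ from the finite expansion of $v_n$ versus the periodic infinite expansion at a fixed point, part $(iii)$ from the flushing results, and part $(iv)$ from periodicity of the first coordinate plus monotone convergence of the second coordinate from below as in Lemma~\ref{lem: d flushen}. You are in fact somewhat more careful than the paper on two points it glosses over --- the reverse inclusion in $(i)$ (the paper only verifies that the orbit consists of fixed points) and the fact that $P$ lies on the boundary of $\mathcal D$ rather than in $\mathcal D$ itself --- but these are refinements of the same argument, not a different one.
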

\begin{proof}
Assume that $q$ is even and $\alpha \in \left(\frac12,\frac{1}{\lambda}\right)$, the other cases can be proven in a similar way. In this case $r=p-1$ and we have by Corollary~\ref{cor: even fixed point} that $\left(\frac{-1}{\lambda+1},\lambda-1 \right)$ is a fixed point of $\mathcal{T}_\alpha^{p-1}$. It follows that points in the $\mathcal{T}_\alpha$-orbit of $\left(\frac{-1}{\lambda+1},\lambda-1 \right)$ must also be fixed points of $\mathcal{T}_\alpha^{p-1}$, which proves ($i$). In each fixed point of $\mathcal{T}^{p-1}_\alpha$ both of the coordinates have a periodic infinite expansion. For ($ii$) we note that by definition $v_n$ has a finite expansion of length $n$ and therefore for every $x$ and every $n$ we have $v_n \notin \mathcal F$. We conclude from the section on flushing on page \pageref{sec: flush}, that for every $G_q$-irrational number $x$ there are infinitely many $n$ for which $(t_n,v_n)\notin {\mathcal D}$, which is ($iii$). Finally,  for each $i=0,1,\dots,r-1$ and every $n\geq 0$ the points ${\mathcal T}_{\alpha}^{n}(x_i,0)$ are below the graph of $f$. The fixed points ${\mathcal T_{\alpha}}^i\left(\frac{-1}{1+\lambda} ,\lambda -1\right)$ are attractors for these points, cf. the proof of Lemma~\ref{lem: d flushen}.
\end{proof}

\emph{Proof of Theorem~\ref{thm: article Borel alpha}.} Let $q \geq 3$ be an integer and let $x$ be a $G_q$-irrational.  We first assume that we do not have a finite spectrum, so if $q$ is even, we consider all $\alpha \in \left[ \frac12, \frac{1}{\lambda}\right]$ and if $q$ is odd  we assume $\alpha \in  \left[\frac12,\frac{\rho}{\lambda}\right]$. 

From~Lemma~\ref{lem:FixedAndFlushed} (ii) we see that $(t_n,v_n)$ can never be a fixed point for any $n \geq 0$. From (iii) we know that here are infinitely many $n$ for which $(t_n,v_n)\notin {\mathcal D}$, so there are infinitely many $n \in \N$ for which
\[ 
q_n^2 \left| x - \frac{p_n}{q_n} \right| \leq \mathcal{H}_q. 
\]
It remains to show that also in this case $\mathcal{H}_q$ can not be replaced by a smaller constant. Take $x$ such that $t_1 = \frac{-\rho}{1+\lambda\rho}$. By definition of $\Omega_\alpha$ we know that $v_1 \leq \lambda-1$ and since $v_1$ has a finite expansion we find $v_1 < \lambda-1$. For all $l \geq 1$ we have $(t_{1+r\,l},v_{1+r\,l}) \notin \mathcal{ D}$ and for every $0 \leq i<r$ one has $\displaystyle{\lim_{l\rightarrow\infty}}\mathcal{T}_\alpha^i(t_{1+r\,l},v_{1+r\,l}) = \mathcal{T}_\alpha^i\left(\frac{-\rho}{1+\lambda\rho} ,\lambda -\rho\right)$. So, $\mathcal{H}_q$ can not be replaced by a smaller constant. 

Finally, assume $q$ is odd  and  $\alpha \in \left(\frac{\rho}{\lambda},\frac{1}{\lambda}\right]$. From the finite spectrum in Theorem~\ref{th: 3 tong odd general} it immediately follows that in this case there are  infinitely many $n$ for which
\[ 
q_n^2 \left| x - \frac{p_n}{q_n} \right| \leq \mathcal{H}_q. 
\]
It remains to show that in this case the constant $\mathcal{H}_q$ cannot be replaced by a smaller constant. Consider $(\rho,\rho)$, the fixed point of $\mathcal{T}_\alpha^{h+1}$. We find
$$
\mathcal{T}_\alpha(\rho,\rho) = \left(\frac{1}{\rho}-\lambda,\frac{1}{\lambda+\rho}\right).
$$
Note that $H_1=\frac{1}{\lambda+1}<\frac{1}{\lambda+\rho}<\frac{1}{\lambda}=H_2$. Furthermore
$$
f\left(\frac{1}{\rho}-\lambda\right)= \frac{\mathcal{H}_q}{1-\mathcal{H}_q\left(\frac{1}{\rho}-\lambda\right)} = \frac{\rho}{\rho^2+1-\rho\left(\frac{1}{\rho}-\lambda\right)} = \frac{1}{\lambda+\rho}.
$$
So $\mathcal{T}_\alpha(\rho,\rho)$ lies on the graph of $f$ bounding $\mathcal{ D}$. Points in $\mathcal{ D}$ on the left hand side of $x=-\delta_1$ are mapped into $\mathcal{ D}$ by $\mathcal{T}_\alpha$ and we find $\mathcal{T}^2_\alpha(\rho,\rho), \mathcal{T}^3_\alpha(\rho,\rho),\dots,\mathcal{T}^{h-1}_\alpha(\rho,\rho)$ all are in $\mathcal{ D}$. 

It is easy to check that $\mathcal{T}_\alpha\left(\frac{-1}{\lambda+\rho},\lambda-\frac{1}{\rho}\right) = (\rho,\rho)$ and that $g\left(\frac{-1}{\lambda+\rho}\right)=\lambda-\frac{1}{\rho}$. We conclude that $\mathcal{T}^h_\alpha(\rho,\rho)= \mathcal{T}_\alpha^{-1}(\rho,\rho)=\left(\frac{-1}{\lambda+\rho},\lambda-\frac{1}{\rho}\right)$ lies on the graph of $g$.

Now consider any point $(t_n,v_n)=(\rho,y)$, since $\rho$ has a periodic infinite expansion we know $y\neq \rho$. However the periodic orbit of $(\rho,\rho)$ is an attractor of the orbit of $(\rho,y)$, so $\displaystyle{\lim_{k\rightarrow \infty} \mathcal{T}_\alpha^{k(h+1)} (\rho,y) = (\rho,\rho)}$. It follows that the constant $\mathcal{H}_q$ is best possible in this case.
\hfill $\Box$\medskip\

\subsection{Hurwitz for \texorpdfstring{$\alpha$}{alpha}-Rosen fractions}
For odd $q$ and some values of $\alpha$ we can generalize Theorem~\ref{thm: article Borel alpha} to a Hurwitz-type theorem, which is the Haas-Series result mentioned in Section~\ref{sec: rosen intro} From~\cite{BJW} it follows that for all $\alpha \in \left[\frac12,\frac{1}{\lambda}\right]$, for all $z \geq 0$ and for almost all $x$, the limit
$$
\lim_{n \rightarrow \infty} \frac{1}{n} \# \{ 1 \leq j \leq n | \Theta_j (x) \leq z \}
$$
exists and equals the distribution function $F_\alpha$, which satisfies
$$
F_\alpha(z) = \bar{\mu} \left(\left\{ (t,v) \in \Omega_\alpha \left| v \leq f(t) = \frac{z}{1-zt} \right. \right\}\right),
$$
where $\bar{\mu}$ is the invariant measure for $\mathcal{T}_\alpha$ given in~\cite{DKS}.
Defining the Lenstra constant $\mathcal{L}_\alpha$ by
\begin{equation}
\label{eq:lenstra even }
\mathcal{L}_a = \max \left\{ c>0 \left| \left (t,\frac{c}{1-ct} \right) \in \Omega_\alpha \right. ,\textrm{ for all } t \in [l_0,r_0] \right\}.
\end{equation}
As mentioned in Section~\ref{sec: intro lenstra hurwitz}, the distribution function $F_\alpha$ is a linear map with positive slope for $z \in [0,\mathcal{L}_\alpha]$. Nakada showed in~\cite{N2} that the Lenstra constant is equal to the Legendre constant whenever the latter constant exists. In his article he particularly mentioned Rosen fractions and $\alpha$-expansions, but this result also holds for $\alpha$-Rosen fractions. So if $p/q$ is a $G_q$-rational and
$$
q^2 \left| x - \frac{p}{q} \right| < \mathcal{L}_\alpha,
$$
then $p/q$ is an $\alpha$-Rosen convergent of $x$. 

Since for the standard Rosen fractions (where $\alpha = \frac12$) one has that $\mathcal{L}_\alpha<\mathcal{H}_q$, the Haas-Series result does not follow from Theorem~\ref{thm: article Borel alpha}; see also the discussion on the results of Legendre, Borel and Hurwitz in Section~\ref{sec: intro lenstra hurwitz}. 

One wonders whether $\alpha$-Rosen fractions could yield a continued fraction proof of the Haas-Series result for particular values of $\alpha$. Proposition 4.3 of~\cite{DKS} states that for even $\alpha$-Rosen-fractions
$$
\mathcal{L}_a = \min \left\{ \frac{\lambda}{\lambda+2},\frac{\lambda(2-\alpha \lambda^2)}{4-\lambda^2} \right\}.
$$

Since $\mathcal{L}_\alpha <\mathcal{H}_q = \frac12$, we see that a direct continued fraction proof of a Hurwitz-result cannot be given in this case. In~\cite{DKS} the more involved formula for $\mathcal{L}_\alpha$ for odd $\alpha$-Rosen fractions was not given. For odd $q$ we have the following proposition.

\begin{Proposition}
\label{lem: lenstra constant}
Let $q \geq 3$ be an odd integer and let $\alpha_L = \frac{\mathcal{H}_q}{\lambda(1-\mathcal{H}_q)}$. Then $\mathcal{L}_\alpha  < \mathcal{H}_q$ for $\alpha \in [1/2,\alpha_L)$, while 
$\mathcal{L}_\alpha = \frac{\alpha\lambda}{\alpha\lambda+1} > \mathcal{H}_q$ for $\alpha \in [\alpha_L,1/\lambda]$.
\end{Proposition}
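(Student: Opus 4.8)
The plan is to exploit that $\Omega_\alpha$ is a union of rectangles $J_n\times[0,H_n]$ whose heights, by Theorem~\ref{th: position l r odd} together with the height recursion, increase from left to right along the $t$-axis, while the curve $t\mapsto f_c(t)=c/(1-ct)$ appearing in the definition of $\mathcal{L}_\alpha$ is strictly increasing and finite on $[l_0,r_0]$ (the optimal $c$ will satisfy $c<1/r_0$). Consequently the graph of $f_c$ can only leave $\Omega_\alpha$ through the upper-right corner of some rectangle, so that
\[
\mathcal{L}_\alpha=\min_n \frac{H_n}{1+H_n b_n},
\]
where $b_n$ denotes the right endpoint of $J_n$; equivalently $1/\mathcal{L}_\alpha=\max_n\big(H_n^{-1}+b_n\big)$, and $\mathcal{L}_\alpha\ge\mathcal{H}_q$ is equivalent to the graph of $f=f_{\mathcal{H}_q}$ fitting inside $\Omega_\alpha$. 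First I would establish this reduction carefully, noting that $\tfrac{H_n}{1+H_n b_n}$ is precisely the value of $c$ for which $f_c$ passes through the corner $(b_n,H_n)$.

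The heart of the matter is the range $\alpha\in(\rho/\lambda,1/\lambda]$, which contains $\alpha_L$. Here $H_1=\tfrac{1}{\lambda+1}$ and $J_1=[l_0,r_1)$. Since $r_0=\alpha\lambda\le 1$ one checks $d(r_0)=1$ throughout this range, whence $r_1=T_\alpha(r_0)=\tfrac{1}{\alpha\lambda}-\lambda$. Evaluating the corner constant at $n=1$ gives
\[
\frac{H_1}{1+H_1 r_1}=\frac{1}{(\lambda+1)+\big(\tfrac{1}{\alpha\lambda}-\lambda\big)}=\frac{\alpha\lambda}{\alpha\lambda+1}.
\]
I would then show that this first corner is the minimizing one, i.e.\ that $H_1^{-1}+r_1=1+\tfrac{1}{\alpha\lambda}$ dominates $H_n^{-1}+b_n$ for every other $n$; because $H_1$ is the smallest height this amounts to a finite list of explicit inequalities in $\lambda$ and $\alpha$ (the obvious competitor is the last corner $(r_0,1)$, which only gives $\tfrac{1}{1+\alpha\lambda}\ge\tfrac{\alpha\lambda}{1+\alpha\lambda}$ since $\alpha\lambda\le1$). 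Granting this, $\mathcal{L}_\alpha=\frac{\alpha\lambda}{\alpha\lambda+1}$ on $(\rho/\lambda,1/\lambda]$, and a short computation with $\alpha_L=\mathcal{H}_q/(\lambda(1-\mathcal{H}_q))$ shows $\frac{\alpha\lambda}{\alpha\lambda+1}=\mathcal{H}_q$ exactly at $\alpha=\alpha_L$ (indeed $\alpha_L\lambda=\mathcal{H}_q/(1-\mathcal{H}_q)$). Since $x\mapsto x/(x+1)$ is increasing, this yields $\mathcal{L}_\alpha>\mathcal{H}_q$ for $\alpha\in[\alpha_L,1/\lambda]$ and $\mathcal{L}_\alpha<\mathcal{H}_q$ for $\alpha\in(\rho/\lambda,\alpha_L)$.

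It then remains to treat $\alpha\in[\tfrac12,\rho/\lambda]$, where only $\mathcal{L}_\alpha<\mathcal{H}_q$ is asserted. Here the shape of $\Omega_\alpha$ is different (so $H_1$ and $b_1$ change), but the same corner reduction applies, and I would verify $\mathcal{L}_\alpha<\mathcal{H}_q$ by exhibiting a single corner strictly below the graph of $f$, i.e.\ by showing $f$ fails to fit in $\Omega_\alpha$. Using the explicit $H_1=\rho/(\lambda\rho+1)$ and the corresponding $b_1$ for $(\tfrac12,\rho/\lambda)$ (and the analogous data at $\alpha=\tfrac12$), this is again a finite computation, consistent with the boundary value $\lim_{\alpha\downarrow \rho/\lambda}\mathcal{L}_\alpha=\rho/(\rho+1)<\mathcal{H}_q=\rho/(\rho^2+1)$. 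The main obstacle throughout is not any one estimate but the bookkeeping: in each shape regime one must confirm that the corner singled out really is the global minimizer of $H_n^{-1}+b_n$, which requires the full ordering of Theorem~\ref{th: position l r odd} together with the monotonicity of the heights, and then discharge the resulting finite set of inequalities in $\lambda$ and $\alpha$.
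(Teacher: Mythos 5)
Your proposal is correct and follows essentially the same route as the paper: both reduce the computation of $\mathcal{L}_\alpha$ to locating the upper-right corner of a rectangle $J_n\times[0,H_n]$ that the curve $v=c/(1-ct)$ hits first, and for $\alpha\in[\alpha_L,1/\lambda]$ both identify the binding corner as $(r_1,H_1)$ with $r_1=\tfrac{1}{\alpha\lambda}-\lambda$, yielding $\mathcal{L}_\alpha=\tfrac{\alpha\lambda}{\alpha\lambda+1}$ and the threshold $\alpha_L$. Your treatment of $\alpha\in[1/2,\alpha_L)$ is in fact slightly more systematic than the paper's, which explicitly verifies $\mathcal{L}_\alpha<\mathcal{H}_q$ only at $\alpha=\rho/\lambda$ via the corner $(\rho,\lambda/2)$.
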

\begin{proof}
For every $\alpha \in [1/2,\alpha_L)$ there is a $C< \mathcal{H}_q$ such that $ \left(t,\frac{C}{1-Ct} \right) \notin \Omega_\alpha$. We only prove it for $\alpha=\rho/\lambda$. Consider the point $\left(\rho, \frac{C}{1-C\rho}\right)$. This point is in $\Omega_\rho$ if the $y$-coordinate is smaller than top height $\frac{\lambda}{2}$. We have
$$
\frac{C}{1-C\rho} < \frac{\lambda}{2} \textrm{ if and only if } C < \frac{\lambda}{\lambda\rho+2}. 
$$
So by~(\ref{eq:lenstra even }) we have that $\mathcal{L}_\alpha = \frac{\lambda}{\lambda\rho+2} < \mathcal{H}_q$. 

Let $\alpha \in  [\alpha_L,1/\lambda]$. Consider the point $\left(r_1, \frac{C}{1-Cr_1} \right)$, this point is in $\Omega_\alpha$ if the $y$-coordinate is smaller than $H_1=\frac{1}{\lambda+1}$. Using $r_1 = \frac{1}{\alpha\lambda}-\lambda$ we find that
$
\frac{C}{1-C r_1} \leq \frac{1}{\lambda+1} \textrm{ if and only if } C \leq \frac{\alpha\lambda}{\alpha\lambda+1}. 
$ 
For $C = \frac{\alpha\lambda}{\alpha\lambda+1} $ we easily find that all points $\left (t,\frac{C}{1-Ct} \right) $ with $t \in (l_0,r_0)$ are in $\Omega_\alpha$, so $\mathcal{L}_\alpha = \frac{\alpha\lambda}{\alpha\lambda+1}$. Finally we see that $\mathcal{L}_\alpha > \mathcal{H}_q$ if and only if $ \alpha > \alpha_L.$
\end{proof}

Thus the Hurwitz-type theorem of Haas-Series follows from Theorem~\ref{thm: article Borel alpha}, Proposition~\ref{lem: lenstra constant} and Nakada's result from~\cite{N2} in case $q$ is odd and $\alpha \in [\alpha_L,1/\lambda]$.

\bibliography{bibionica}
\bibliographystyle{abbrv} 

\end{document}